\providecommand{\noopsort[1]{}}
\numberwithin{equation}{section}
\newtheorem{thm}{Theorem}[section]
\newtheorem{cor}[thm]{Corollary}
\newtheorem{prop}[thm]{Proposition}
\newtheorem{lem}[thm]{Lemma}
\theoremstyle{remark}
\newtheorem{rem}[thm]{Remark}
\newtheorem{hyp}[thm]{Hypothesis}
\newtheorem{example}[thm]{Example}
\theoremstyle{definition}
\newtheorem{defn}[thm]{Definition}
\newcommand{\coloneqq}{\mathrel{\mathop:}=}
\newcommand{\eps}{\varepsilon}
\newcommand{\one}{\mathds{1}}
\newcommand{\R}{\mathds{R}}
\newcommand{\C}{\mathds{C}}
\newcommand{\N}{\mathds{N}}
\newcommand{\expect}{\mathds{E}}
\newcommand*\mE{\mathds{E}}
\newcommand*\me{\mathrm{e}}
\newcommand{\Pb}{\mathds{P}}
\newcommand{\mpr}{\mathds{P}}
\newcommand{\Rd}{{\mathds{R}^d}}
\newcommand{\dx}{\mathrm{d}}
\newcommand{\dy}{{\mathrm d}y}
\newcommand{\dz}{{\mathrm d}z}
\newcommand{\dt}{{\mathrm d}t}
\newcommand{\tv}{\mathsf{TV}}
\newcommand{\cL}{\mathscr{L}}
\newcommand{\cA}{\mathscr{A}}
\newcommand{\cB}{\mathscr{B}}
\newcommand{\cM}{\mathscr{M}}
\newcommand*{\kb}{\textcolor{black}}
\newcommand*{\mk}{\textcolor{black}}
\newcommand*{\kl}{\textcolor{black}}
\newcommand*{\ml}{\textcolor{black}}
\begin{document}
\title{The fractional Laplacian with reflections}
\author[K. Bogdan]{Krzysztof Bogdan}
\address{Faculty of Pure and Applied Mathematics,
Wroc\l aw University of Science and Technology,
Wyb. Wyspia\'nskiego 27, 50-370 Wroc\l aw, Poland}
\email{krzysztof.bogdan@pwr.edu.pl}
\author[M. Kunze]{Markus Kunze}
\address{Universit\"at Konstanz, Fachbereich Mathematik und Statistik, Fach 193, 78357 Konstanz, Germany}
\email{markus.kunze@uni-konstanz.de}
\thanks{The first-named author has been supported through the DFG-NCN Beethoven Classic 3 programme, contract no. 2018/31/G/ST1/02252 (National Science Center, Poland) and SCHI-419/11–1 (DFG, Germany)}
\subjclass[2020]{Primary 47D06, 60J35}
\keywords{fractional Laplacian, reflection, stationary distribution}

\begin{abstract}
Motivated by the notion of  isotropic $\alpha$-stable L\'evy processes confined, by reflections, to a bounded open Lipschitz set $D\subset \R^d$, we study some related analytical objects. Thus, we construct the corresponding transition semigroup, identify its generator and prove exponential speed of convergence of the semigroup to a unique stationary distribution for large time.
\end{abstract}

\maketitle

\section{Introduction}
\subsection{\kb{Setting, goals, and results}}
Consider the isotropic $\alpha$-stable L\'evy process $(Y_t, t\ge 0)$ in $\Rd$. 
The intensity of jumps for this process is given by $\nu(x,\dy) = c_{d,\alpha}|x-y|^{-d-\alpha}\dy$, which is the integro-differential kernel of the fractional Laplacian on $\Rd$ (for details, see below). Given an open set $D\subset \Rd$, our \kb{interest lies in} a Markov process $(X_t, t\ge 0)$ that \kb{coincides with} $Y$ as long as $Y$ remains within $D$. However, at the time $\tau_D$ of the first exit of $Y$ from $D$, we \kb{intend} to perform a \textit{reflection}: instead of \kb{allowing $X$ to leave $D$ by going to the point} $z\coloneqq Y_{\tau_D}\kb{\in D^c}$, our process $X$ should be \ml{\textit{restarted}} immediately \kb{(at time $\tau_D$)} at a point $y\in D$, without \kb{venturing into} $D^c$.
\kb{The point $y$ is} chosen (randomly) according to a probability measure $\mu(z, \dy)$, which depends on $z$, \kb{otherwise independently of prior events}.
Based on this heuristic description, we expect the intensity of jumps of $(X_t, t\ge 0)$ to be the following integral kernel on $D$,
\begin{equation}\label{e.kgKt}
\gamma(x,\dy)\coloneqq \nu(x,\dy)+\int_{D^c} \nu(x,\dz) \mu(z,\dy).
\end{equation}

Actually, the outcome of this work is not a Markov process, but a \textit{conservative Markovian semigroup} $(\mk{K(t)}, t > 0)$ having $\gamma$ as the kernel of \kb{the} generator. We also prove that $(\mk{K(t)}, t > 0)$ has a unique stationary density and is \textit{exponentially asymptotically stable}. Regarding the construction and analysis of the process \kb{$X$},
we will \kb{cover them} in \cite{KB-MK-Mp}. \mk{Now, let us state our} assumptions on $D$ and $\mu$:
\begin{hyp}\label{hyp1}
Let $D\neq \emptyset$ be an open bounded Lipschitz subset of $\mathbb{R}^d$
\mk{and let $\mu: D^c\times \mathscr{B}(D) \to [0,1]$ have the following properties:
\begin{enumerate}
[(i)]
    \item For every $A\in \mathscr{B}(D)$, the map 
    $z\mapsto \mu(z, A)$ is Borel measurable, and \kb{for every $z\in D^c$,} $\mu(z, \cdot)$ 
    is a Borel probability measure on $D$.
    \item There exists a compact set $H\subset D$
    such that $\vartheta\coloneqq \inf\limits_{z\in D^c}\mu(z, H)>0$.
    \item The map $z\mapsto \mu(z, \cdot)$ is weakly continuous
    at $\partial D$; that is, if points $z_n\in D^c$ converge to
    $z\in \partial D$, then the measures $\mu(z_n, \cdot)$ converge weakly to $\mu(z, \cdot)$.
\end{enumerate}
}
\end{hyp}

\kb{Hypothesis~\ref{hyp1} is assumed throughout, though we  recall it from time to time.
Let us comment on the assumptions. The geometric regularity of $D$ is technically important but can be relaxed; see Remark~\ref{r.rh}. Condition (i) means, in short, that the \textit{repulsion kernel} $\mu$ is a 
\textit{stochastic kernel}. The \textit{lower bound} (ii)
is crucial for Theorem~\ref{t.m} and, informally, for avoiding an infinite number of reflections \kb{within} finite time. The \textit{weak continuity} (iii) 
allows for a description of the corresponding infinitesimal generator and 
    boundary conditions in terms of continuous functions on $\overline{D}$; see Theorem~\ref{t.generator} and Remark~\ref{r.bc}. }

\subsection{\kb{Context and literature}}
Reflections similar to the ones that we study for jump processes appeared first in the work of Feller \cite{feller-diffusion} \kb{concerning} one-dimensional diffusions. Feller coined the \kb{term} \emph{instantaneous return processes} for the resulting 
\kb{Markov} processes.  
On the level of the corresponding generator of the process, which is a second order elliptic differential operator in  \cite{feller-diffusion}, 
\kb{these} reflections lead to certain nonlocal boundary conditions.  Further operators of this form, 
\kb{including those} in higher dimension, have been 
\kb{explored} by various \kb{techniques and authors,} including Galakhov and Skubachevski{\u\i} \cite{galaskub}, Ben-Ari and Pinski \cite{b-ap07}, Arendt, Kunkel, and Kunze \cite{akk16}, and Kunze \cite{kunze20}; see also the monograph of Taira \cite{taira} and the references therein.

\kb{\textit{Boundary conditions} for jump processes and nonlocal operators remain an open subject, although various mechanisms of reflection from $D^c$ and Neumann-type conditions have been proposed in the literature. When reflecting jump processes from $D^c$ to $D$, one faces the choice of making} $X_{\tau_D}$ depend on $Y_{\tau_D-}$, 
on $Y_{\tau_D}$,
or both.
\kb{These scenarios are notably richer than those for diffusions.}
Bogdan, Burdzy, and Chen \cite{MR2006232} propose \kb{both} censored and actively reflected processes, with the reflection depending (deterministically) \kb{solely} on $Y_{\tau_D-}$. For $D$ being the half-space, 
Barles, Chasseigne, Georgelin, and Jakobsen \cite{MR3217703} discuss geometrically motivated reflections that deterministically depend on $(Y_{\tau_D-},Y_{\tau_D})$. 
Dipierro, Ros-Oton, and Valdinoci \cite[p.\ 378]{MR3651008} postulate a random mechanism of reflection \kb{that employs} $\mu(z,\mathrm{d}y)=\nu(z,\mathrm{d}y)/\nu(z,D)$. Notably, the papers \cite{MR3217703, MR3651008} discuss Neumann-type problems, but neither the semigroup nor the corresponding Markov process\kb{, both of which are highly nontrivial to construct and study \cite{MR2006232}.}
\kb{In comparison,} Vondra\v{c}ek \cite{MR4245573} \kb{constructs a Markov process $X$ that returns to $D$ following the distribution $\nu(Y_{\tau_{D}},\mathrm{d}y)/\nu(Y_{\tau_{D}},D)$, 
but the process $X$ remains at $Y_{\tau_D}\in D^c$} for a unit exponential time before \kb{returning} to $D$. 
\kb{This arrangement} helps avoid the \kb{problematic} scenario of an infinite number of 
\kb{passes} between $D$ and $D^c$ in finite time, while also taking \cite{MR4245573} beyond 
the \kb{tentative setting of instantaneous} reflections \kb{in \cite{MR3217703, MR3651008}, and our paper}.

\kb{Reflecting Markov processes may be considered as an instance of \textit{stochastic resetting}, which is a hot topic in statistical physics concerned, among others, with equilibrium distributions, search optimization, renewal theory and modelling; see, e.g., Evans, Majumdar, and Schehr \cite{MR4093464}, Garbaczewski and \.{Z}aba \cite{MR4474283}, and Stanislavsky and Weron \cite{MR4378848}.}
\kb{Specific reflections, \textit{resurrections}, or \textit{recurrent extensions} of stochastic processes with \textit{scaling} are studied by Kim, Song, and Vondra\v{c}ek \cite{MR4520527} for the halfline using the Lamperti transform. See also Rivero \cite{MR2364226} and Fitzsimmons \cite{MR2266714}. Kim, Song, and Vondra\v{c}ek \cite{kim2022potential} explore halfspaces using Dirichlet forms; see also Chen and Song \cite{MR1952456}. Recent results for the entire real line can be found in Pant\'{\i}, Pardo, and Rivero \cite{MR4140082} and Iksanov and Pilipenko \cite{MR4514832}; see also Chaumont, Pant\'{\i}, and Rivero \cite{MR3160562}.}  
\kb{Furthermore, Bobrowski \cite{bobrowski2022concatenation} studies resurrections or \textit{concatenations} on metric graphs via \textit{exit laws} and resolvents. We should add that a general probabilistic approach to concatenation or \textit{piecing-out} of Markov processes was proposed by Ikeda, Nagasawa, and Watanabe \cite{MR202197}, see also Meyer \cite{meyer75}. Sharpe \cite{MR958914} provides an introduction and further references to the method. For a recent presentation and references to concatenation of \textit{right processes}, we refer to Werner \cite{MR4247975}.}

\subsection{\kb{Approach, perspectives, and content}}
\kb{We construct the Markovian transition semigroup corresponding to the kernel $\gamma$ by the method of
nonlocal Schr\"odinger perturbations of integral kernels by Bogdan and Sydor \cite{MR3295773}.
\ml{
In the special case of transition kernels, the method may handle 
analytic aspects of concatenation of Markov processes. Namely,
given a sub-Markovian kernel, a suitable repulsion kernel defines 
a larger transition kernel, possibly Markovian, which 
corresponds to a specific partial differential equation with 
(nonlocal) boundary conditions. For instance, see} 
Corollary~\ref{c.se}.}

\kb{In the present setting we consider a}
class of \kb{repulsion} kernels \kl{$\mu$} \kb{for the Dirichlet heat kernel}
of the  fractional Laplacian. 
On the one hand, the restriction to \ml{the} fractional Laplacian is \kb{dictated merely by the technical convenience and general interest in this operator, so} 
generalizations are quite obvious. On the other hand, \kb{the papers} \kb{\cite{MR2006232, MR3217703, MR3651008, MR4245573, MR4520527, kim2022potential}} make do without the 
\kb{lower bound for the repulsion kernel}
\kl{that}
we assume in Hypothesis~\ref{hyp1}\kb{(ii).} 
\kb{Therefore}
the above references and the present paper \kb{should be considered as}
different ramification\kb{s} of the
problem of constructing operators, semigroups and Markov processes with specific boundary conditions. \kb{As we already mentioned, this}
area of research is motivated by the Neumann-type boundary-value problems 
%
and 
the problem of piecing-out or concatenation  of Markov processes.
Its \kb{objective}, beyond the construction, are the questions of the large-time and boundary behavior of the \kb{resulting} semigroup and process, as well as applications to nonlocal differential equations with those boundary conditions.

\kb{The} paper is organized as follows.
Section~\ref{sec:prelim} \kb{provides essential preliminaries, including an introduction to} the fractional Laplacian \kb{and} related potential theory. \kb{It also introduces} the Dirichlet heat kernel $(\mk{p_D(t)}, t>0)$ of the set $D$.
In Section~\ref{sec:tk}, we \kb{proceed to construct} the kernel $(\mk{k(t)}, t>0)$ \kb{that defines} the semigroup $(\mk{K(t)})$. \kb{We also prove that}
$\int_D \mk{k(t,x,y)}\dy=1$ for all $x\in D$ and $t>0$.
%
Section \ref{s.Lt} \kb{delves into the study of} the resolvent of $(\mk{K(t)})$. 
\kb{In} Section~\ref{s.sg}, we \textit{characterize} the generator of the semigroup and discuss the associated boundary conditions. \kb{Additionally, we provide a solution to a typical boundary value problem.}
In Section~\ref{s.im} we prove the existence of a unique invariant measure (density) and the exponential convergence of the semigroup to the stationary measure for large time. 
\kb{The reader may also find a few examples illustrating our results.
Example~\ref{ex.dirac} and Remark~\ref{r.tough} (\ml{restarting at} a fixed point $x_0\in D$) show that, in general, the semigroup $(K(t))$ is neither symmetric nor does it act on $L^2$. Examples~\ref{ex.1}, \ref{ex.2}, and \ref{ex.3} focus on the boundary conditions induced by $\mu$.
In Remark~\ref{r.iob}, we propose directions for future research and generalizations.}

\kb{As we mentioned, the} construction of the semigroup $(\mk{K(t)})$ 
is purely analytic, based on nonlocal \kb{Schr\"odinger}
perturbation, \kb{or} Duhamel formula applied to $p^D$. 
The reader interested in the \kb{existence and properties of a} Markov process resulting from $(\mk{K(t)})$ is referred to  the forthcoming paper \cite{KB-MK-Mp} by the authors.

\subsection*{Acknowledgments.} We thank \kb{Adam Bobrowski,} Jan van Casteren, \kb{Damian Fafuła, Piotr Garbaczewski,} Wolfhard Hansen, Tadeusz Kulczycki, Tomasz Klimsiak, Tomasz Komorowski, \kb{Andrey Pilipenko, Victor Rivero}, Tomasz Szarek, Paweł Sztonyk \kb{and Zoran Vondra\v{c}ek} for discussions, comments and references. \kb{We thank the referee for insightful comments, including those on Hypothesis~\ref{hyp1}(iii), which strongly influenced the presentation and content of the paper.}

\section{Preliminaries}\label{sec:prelim}
We often use $\coloneqq$ for definitions, \kb{e.g.,} 
$\mathds{N}_0\coloneqq \{0,1,2\ldots\}$ and $\mathds{N}\coloneqq \{1,2,\ldots\}$. \kb{$\one_A$ denotes the indicator function of $A$ and $\one$ is the indicator of the full space.} We let $d\in \mathds{N}$ and consider the Euclidean space $\Rd$.
All  the sets, functions, measures and kernels considered in the paper are Borel.
If not stated otherwise,  functions take values in the extended real line. 
For $x\in \Rd$ and $r\in (0,\infty)$ we denote by $B(x,r)=\{y\in \Rd: |y-x|<r\}$ the ball with radius $r$ and center at~$x$. 
We require integrals  
to be nonnegative or absolutely convergent.

\subsection{Fractional Laplacian}\label{sec:pot-theo-notions}
Let 
$\alpha \in (0,2)$, and 
$$\nu(x)\coloneqq c_{d,\alpha}|x|^{-d-\alpha},\quad x\in \Rd,$$ 
where 
$$c_{d,\alpha} \coloneqq \frac{2^\alpha \Gamma((d+\alpha)/2)}{\pi^{d/2}|\Gamma(-\alpha/2)|}.$$
The constant $c_{d,\alpha}$ is chosen in such a way that
$$
|\xi|^{\alpha}=\int_\Rd (1-\cos \xi\cdot x)\nu(x)\dx x\,, \quad \xi \in \Rd.
$$

According to Fourier inversion and the L\'evy--Khinchine formula, there is a convolution semigroup of smooth probability densities $(\mk{p(t)}, t>0)$ such that
\begin{equation}\label{e.LKf}
\int_\Rd \me^{i\xi\cdot x} \mk{p(t,x)}\dx x=\me^{-t|\xi|^\alpha}, \quad 
\xi\in \Rd.
\end{equation}
It follows that 
\begin{equation}\label{e.sc1}
\mk{p(t, x)=t^{-d/\alpha}p(1,t^{-1/\alpha}x)},\qquad t>0,\ x\in \Rd.
\end{equation}
The above \textit{scaling} implies in particular that 
\begin{equation}\label{e.lbm}
\int_{\{|x|<ct^{1/\alpha}\}}\mk{p(t,x)}\dx x=\int_{\{|x|<c\}}\mk{p(1,x)}\dx x >0,\qquad \kb{c,}\,t>0.
\end{equation}
It is \mk{well known} that $\mk{p(1,x)} \approx (1+|x|)^{-d-\alpha}$ for $x\in \Rd$, see, e.g., \cite[remarks after Theorem~21]{MR3165234} or Kwa\'snicki \cite[(2.11)]{MR3613319}. Here $\approx$ indicates that the ratio of both sides is bounded from above and below by a (strictly positive) constant. We call such comparisons \textit{sharp}. Thus,
\begin{equation}\label{e.sc}
\mk{p(t,x)}\approx t^{-d/\alpha}\wedge \frac{t}{|x|^{d+\alpha}},\qquad t>0,\ x\in \Rd.
\end{equation}

\mk{Setting $p(t, x,y)\coloneqq p(t,y-x)$, $x,y\in \Rd$, $t>0$, we introduce a} (translation-invariant) transition density.
On the space $D([0,\infty))$ of \ml{c\`adl\`ag} functions (paths) $\omega:[0,\infty)\to \Rd$, we define the canonical proces\kb{s} $Y_t(\omega)=\omega_t$, $t\ge 0$\kb{,} and
define Markovian measures $\mpr^x$, $x\in \Rd$, as follows. For (starting points) $x\in \Rd$, (times) $0 < t_1<t_2<\ldots < t_n$ and (windows) $A_1, A_2,\ldots, A_n\subset \Rd$ we let
\begin{align*}
&\mpr^x(\omega_{t_1} \in A_1,\ldots, \omega_{t_n} \in A_n)=\\
&\int\limits_{A_1}\dx x_1 \int\limits_{A_2}\dx x_2 \ldots\int\limits_{A_n}\dx x_n \, \mk{p(t_1, x, x_1)p(t_2-t_1, x_1, x_2) \cdots p(t_n-t_{n-1}, x_{n-1}, x_n).}
\end{align*}
By the Kolmogorov extension theorem\kb{,} these finite\kb{-}dimensional distributions uniquely determine $\mpr^x$, the law of the Markov process $(Y_t)$ starting from $x$. We let $\mE^x$ denote the corresponding expectation. As it turns out, $Y$ is the isotropic $\alpha$-stable process, a specific \textit{symmetric L\'evy process} in $\Rd$ with the L\'evy triplet $(0,\nu,0)$, see, e.g., Sato \cite[Section 11]{MR1739520}. 
To analyze $Y$ we  use the standard complete right-continuous filtration $(\mathcal F_t, t\geq 0)$, see Protter \cite[Theorem I.31]{MR2273672}. In passing we also recall that every L\'evy process is Feller, see \cite{MR2273672} \kb{or} B\"{o}ttcher, Schilling and Wang \cite{MR3156646}, meaning that the operator semigroup 
\begin{equation}\label{eq.semigroup}
\mk{P(t)}f(x)=\mE^xf(Y_t), \quad x\in \Rd,\; t\ge 0,
\end{equation}
leaves $C_0(\Rd)$, \mk{the space of continuous functions
$f:\Rd\to \R$ satisfying $f(x) \to 0$ as $|x|\to \infty$}, invariant and is strongly continuous on that space. 
As in the \kb{I}ntroduction, we \mk{write} $\nu(x,y)=\nu(y-x)=c_{d,\alpha}|y-x|^{-d-\alpha}$ and $\nu(x,\dy)=\nu(x,y)\dy$ and for $u\colon \Rd\to \R$ and $x\in \Rd$ we define
\begin{align}\label{eq:L-def}
\Delta^{\alpha/2}u(x)
&= \lim_{\eps\to 0^+} \int_{\{|y-x|>\eps\}} \!\!\big[u(y)-u(x)\big]\nu(x,\dy)\\
&= \lim_{\eps\to 0^+} \tfrac12 \!\int_{\{|z|>\eps\}} \!\!\big[u(x+z)+u(x-z)-2u(x)\big]\nu(z)\,\dz.\nonumber
\end{align}
This is the fractional Laplacian (it is also common to use the notation $-(-\Delta)^{\alpha/2}$ for this operator). The limit exists, e.g., for $u\in C_c^\infty (\Rd)$, the smooth functions with compact  support. The operator $\Delta^{\alpha/2}$ extends to the infinitesimal generator of the Feller semigroup defined by \eqref{eq.semigroup} on $C_0(\Rd)$. For a discussion of the many equivalent definitions of $\Delta^{\alpha/2}$ we refer to \cite{MR3613319}.\smallskip

\subsection{\kb{Exit times}} 
\kb{The} \textit{time of the first exit} of $Y$ from \kb{an open set $U\subset \Rd$} is
$$\tau_U\coloneqq\inf\{t>0: \, Y_t\notin U\}.$$
By translation invariance and scaling of $p(t, x, y)$, the law of $\{x+Y_t, t\ge 0\}$ under $\mpr^0$ is the same as the law of $\{Y_t, t\ge 0\}$ under $\mpr^x$ for $x\in \Rd$, and, under $\mpr^0$, the law of $\{cY_t\ge 0\}$ equals that of $\{Y_{c^\alpha t}\ge 0\}$. Hence, $\tau_{x+U}$ has the same law under $\mpr^0$ as $\tau_{\ml{U}}$ under $\mpr^x$ and, under $\mpr^0$,
$c\tau_{U}$ has the same law as $\tau_{c^\alpha U}$.

Recall that $D$ is a nonempty bounded open subset of $\Rd$ which is Lipschitz (in the sense of \cite[p.\ 156]{MR1703823}).  In particular, $D$ is regular, meaning that
\begin{equation}\label{e.reg}
\mpr^x(\tau_D=0)=1\quad \kb{\mbox{for}}\quad x\in \partial D.
\end{equation}
This follows from the radial symmetry of $p_t(x)$, Zaremba's exterior cone property of $D$ and Blumenthal's $0$-$1$ law, as in \cite[Section 4.4]{MR2152573}. We also refer to \cite[VII.3, IV]{MR850715} for analytic definitions and treatment of regularity and to connections to the theory of stochastic processes. It is well known that the regularity of $D$ is equivalent to solvability of the Dirichlet problem with arbitrary continuous data; see the above references. \kb{This condition will be important in Section~\ref{sect.dirichlet}.}

The boundedness of $D$ assures that the process leaves $D$ in \emph{finite time}:
$\mpr^x(\tau_D<\infty)=1$, see, e.g., \kb{Bogdan and Byczkowski}  \cite[Subsection 2.3]{MR1671973}. We consider $Y_{\tau_D}$, the position at the exit time, and
$Y_{\tau_D-}\coloneqq \lim_{s\uparrow \tau_D} Y_s$, the position just before the exit. 
Thanks to the Lipschitz geometry of $D$,
\begin{equation}\label{e.nhtb}
\mpr^x(Y_{\tau_D}\in \partial D)=0 \quad \mbox{for}\quad  x\in D.
\end{equation}
\kb{This is the principle of ``not hitting the boundary upon the first exit''} known since Bogdan \cite[Lemma~6]{MR1438304}; see \kb{also} Bogdan, Grzywny, Pietruska-Pa\l{}uba and Rutkowski \cite[Corollary A.2]{BOGDAN2019} for generalizations. 
\kb{In particular,} the first exit from $D$ occurs by a \textit{jump}\kb{:}
\begin{equation}\label{e.nhb}
\mpr^x[\tau_D<\infty, Y_{\tau_D-}\neq Y_{\tau_D}]=1,\qquad x\in D.
\end{equation}
The random variable $\tau_D$ leads to important analytic objects\kb{, especially}
\[
\mk{p_D(t,x,y) \coloneqq p(t,x,y) - \mE^x[p(t-\tau_D, Y_{\tau_D},y)};\, \tau_D<t],\quad t>0,\ x,y\in D,
\]
the \textit{Dirichlet heat kernel}, 
see, e.g., Chung and Zhao \cite[Chapter 2.2]{MR1329992}. 
The function is the transition density of the process $Y$ \emph{killed} upon exiting $D$. \kb{Namely,}
\begin{equation}\label{e.kp}
\mE^x[f(Y_t); t<\tau_D]=\int_D f(y)\mk{p_D(t,x,y)}\dy, \quad x\in D,\, t>0\,,
\end{equation}
and the following Chapman--Kolmogorov equations hold for $p^D$:
\begin{equation}\label{e.CKpD}
\int_D \mk{p_D(s, x,z)p_D(t,z,y)dz=p_D(t+s,x,y)}\,,\quad s,t>0 ,\,
x,y\in D\,.
\end{equation}
It is also \mk{well known} that $\mk{p_D(t,x,y)}$ is jointly continuous and positive for all
$(t,x,y)\in (0,\infty)\times D\times D$.
\kb{In view of} 
\eqref{e.nhb}, the joint distribution of $(\tau_D, Y_{\tau_D-}, Y_{\tau_D})$ 
under $\mpr^x$ is given by the following \emph{Ikeda--Watanabe formula}
\begin{align}\label{eq:IW}
\mpr^x[\tau_D\in I,\, Y_{\tau_D-}\in A,\, Y_{\tau_D}\in B]=
\int\limits_I \!\dx s \int\limits_{A} \!\dx v \int\limits_B\! \dx z \, \mk{p_D(s,x,v)}\nu(v,z),
\end{align}
where $x\in D$, $I\subset [0,\infty)$, $A\subset D$ and $B\subset D^c$, see, e.g., Bogdan, Rosiński, Serafin and Wojciechowski \cite[S\kb{ubs}ection~4.2]{MR3737628}.
\kb{Let}
\begin{equation}\label{eq:Pk}
\mk{h_D(x,z)}\coloneqq
\int_0^\infty \dx s \int_{D}\dx v\, \mk{p_D(s,x,v)}\nu(v,z), 
\quad x\in D,\ z\in D^c.
\end{equation}
\kb{By \eqref{eq:IW}, the}
function is the density of the \textit{harmonic measure} of $D$\kb{:}
\begin{equation}\label{eq:hm}
\mpr^x(Y_{\tau_D}\in B)=\int_B \mk{h_D(x,z)}\dx z, \qquad x\in D,\quad B\subset D^c.
\end{equation}
By the same reason,
\begin{equation}\label{eq:1}
\int_0^\infty \dx s \int_{D}\dx v \int_{D^c} \dx z \, \mk{p_D(s,x,v)}\nu(v,z) =1,\qquad x\in D.
\end{equation}
The \textit{survival probability} $\Pb^x( \tau_D > t)$ can be expressed in two ways as follows:
\begin{align}\label{e.tfsp1}
\Pb^x( \tau_D > t)&= \int_t^\infty \dx s \int_D \dx v \int_{D^c} \dx z\,  \mk{p_D(s, x,v)}\nu (v,z)\\
&= \int_D  \mk{p_D(t, x,y)}\, \dx y
,\quad t>0,\, x\in D.\label{e.tfsp2}
\end{align}
Indeed, the first equation follows from the Ikeda--Watanabe formula \eqref{eq:IW}, and the second from \eqref{e.kp}. 
Combining this with \eqref{eq:1} yields, for all $t>0$, $x\in D$,
\begin{equation}\label{eq:a1}
\int_D \mk{p_D(t,x,y)}\dx y+
\int_0^t\dx s \int_{D}\dx v \int_{D^c} \dx z\, \mk{p_D(s, x,v)}
\nu(v,z)=1.
\end{equation}
\kb{
We also have
\begin{equation}\label{e.fsp}
\Pb^x( \tau_D \le t)= \int_0^t \dx s \int_D \dx v \int_{D^c} \dx z\,  p_D(s, x,v)\nu (v,z),\quad t>0,\, x\in D.   
\end{equation}
}
%
%
For future use, we record the following fact on the \kb{survival probability}.
\begin{lem}\label{l.uniformstopping}
If $\kb{F}
\subset D$ is compact and $T\in (0,\infty)$, then a constant $\eta=\eta(\kb{F},
T)>0$ exists, such that $\mpr^x( \tau_D > t)\geq \eta$ for all $x\in \kb{F}
$ and $0<t\le T$. 
\end{lem}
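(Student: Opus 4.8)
The plan is to prove the uniform lower bound on the survival probability by combining the joint continuity and positivity of the Dirichlet heat kernel $p^D_t$ with a compactness argument. Using the representation \eqref{e.tfsp2}, it suffices to bound $\int_D p^D_t(x,y)\dy$ from below uniformly for $x \in K$ and $t \in (0,T]$.

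The first step is to exploit that $t \mapsto \Pb^x(\tau_D > t)$ is non-increasing, so that for every $t \in (0,T]$ we have $\Pb^x(\tau_D > t) \geq \Pb^x(\tau_D > T)$. Thus it is enough to find $\eta > 0$ with $\Pb^x(\tau_D > T) \geq \eta$ for all $x \in K$, i.e.\ to bound $\int_D p^D_T(x,y)\dy$ below uniformly in $x \in K$. Next, I would fix an open precompact set $U$ with $K \Subset U \Subset D$, and observe that by monotonicity of the Dirichlet heat kernel in the domain, $p^D_T(x,y) \geq p^U_T(x,y)$ for $x,y \in U$; alternatively one can work directly with $D$. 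The function $(x,y) \mapsto p^D_T(x,y)$ is jointly continuous and strictly positive on $D \times D$, hence on the compact set $K \times K$ it attains a positive minimum $c_0 = c_0(K,T) > 0$. Therefore
\begin{equation*}
\int_D p^D_T(x,y)\dy \geq \int_K p^D_T(x,y)\dy \geq c_0\, |K| > 0, \qquad x \in K,
\end{equation*}
where $|K|$ denotes the Lebesgue measure of $K$; if $|K| = 0$ one simply enlarges $K$ to a precompact set of positive measure containing it, which only makes the claim stronger. Setting $\eta := c_0 |K|$ (or $\eta := \min\{1, c_0|K|\}$) completes the argument via \eqref{e.tfsp2} and the monotonicity in $t$.

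The argument is essentially routine given the tools already assembled in the Preliminaries; the only point requiring a little care is the reduction to a fixed time $T$ (handled by monotonicity of the survival probability) and ensuring the compact set over which we integrate has positive measure. No genuine obstacle arises, since the joint continuity and strict positivity of $p^D_t(x,y)$ on $(0,\infty) \times D \times D$ — which are quoted in the excerpt as well known — do all the heavy lifting; the constant $\eta$ depends on $K$ and $T$ precisely because the minimum of $p^D_T$ over $K \times K$ degenerates as $K$ approaches $\partial D$ or as $T \to \infty$.
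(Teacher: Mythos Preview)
Your proof is correct but takes a different route from the paper's. The paper argues via domain monotonicity and translation invariance: with $r=\dist(K,D^c)>0$ one has $\tau_D\ge\tau_{B(x,r)}$ for every $x\in K$, whence $\mpr^x(\tau_D>t)\ge\mpr^x(\tau_{B(x,r)}>t)=\mpr^0(\tau_{B(0,r)}>t)$, a quantity independent of $x$; monotonicity in $t$ and an external reference for the strict positivity of $\mpr^0(\tau_{B(0,r)}>T)$ then give the conclusion. You instead use the representation \eqref{e.tfsp2} together with the joint continuity and strict positivity of $p^D_T$ on $D\times D$ (already recorded in the Preliminaries) and a compactness argument. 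The paper's approach makes the dependence on $\dist(K,\partial D)$ explicit and sidesteps the need to ensure $|K|>0$, while your approach is self-contained within the facts stated in the paper and avoids the outside citation.
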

\begin{proof}
Let $r=\textrm{dist}(\kb{F},D^c)$. Of course, $0<r<\infty$. 
We have $\mpr^x( \tau_D > t)\ge \mpr^x( \tau_{B(x,r)} > t)=\mpr^0( \tau_{B(0,r)} > t)$. The latter is clearly nonincreasing in $t$. It is also strictly positive, see, e.g., Chen and Song \cite[Theorem 2.4]{MR1473631}.
\end{proof}
\kb{For clarity we} 
note 
that some of the arguments in \cite[Theorem 2.4]{MR1473631} refer to Chung and Zhao \cite{MR1329992}, who deal with the Brownian motion, but the arguments apply more generally. \kb{Alternatively, we may use the}
sharp explicit bounds for $\mpr^x( \tau_{B(0,r)} > t)$ 
given in Bogdan, Grzywny, Ryznar \cite[Lemma 6]{MR2722789}.

\subsection{The killed semigroup}\label{sec:8.3}

In this section, we consider $(\mk{P_D(t)}, \ml{t > 0})$, the semigroup 
of the process killed upon leaving $D$. 
\kb{Thus,}
\begin{equation}\label{eq.killedrep}
\mk{P_D(t)}f(x) \kb{\coloneqq} \expect^x\big[ f(Y_t)\one_{\{ t< \tau_D\}}\big] = \int_{D} f(y)\mk{p_D(t,x,y)} \dx y,\quad x\in D.
\end{equation}
\kb{It is} 
a sub-Markovian semigroup on the space $B_b(D)$ of bounded measurable functions on $D$. 
\mk{By $C_0(D)$ we denote the space of continuous functions on
\kb{$D$} that vanish \kb{at} $\partial D$. Recall that a semigroup
$S(t)$ of kernel operators on $B_b(D)$ is called \emph{Feller semigroup} if $S(t)C_0(D) \subset C_0(D)$, $t\geq 0$, and for every
$f\in C_0(D)$ the \emph{orbit} $t\mapsto S(t)f$ is continuous
on $[0,\infty)$ in the supremum norm $\|\cdot\|_\infty$.
\ml{It} is called \emph{strong Feller} if $S(t)B_b(D)
\subset C_b(D)$, $t>0$, where $C_b(D)$ refers to the space of
bounded, continuous functions on $D$. \ml{$(S(t))$} is called
\emph{$C_b$-Feller} if $S(t)f\to f$ on compact subsets of $D$
when $t\to 0$ and $f\in C_b(D)$, see Definition \ref{d.cbsemigroup}
for details. Concerning the semigroup of the killed process, we
have:}

\begin{lem}\label{l.pd}
The semigroup $(\mk{P_D(t)})$ is Feller, strong Feller, and $C_b$-Feller.
Moreover, $\mk{P_D(t)}B_b(D) \subset C_0(D)$ for every $t>0$.
\end{lem}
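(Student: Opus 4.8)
The plan is to verify the four stated properties in turn, exploiting the sharp two-sided estimates on $p^D_t(x,y)$ that are available for bounded Lipschitz $D$ (Jakubowski \cite{MR1991120}) together with the basic heat-kernel bound \eqref{e.sc} and the Chapman--Kolmogorov equation \eqref{e.CKpD}.

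First I would treat the \emph{strong Feller} property and the last assertion $P^D_tB_b(D)\subset C_0(D)$ together, since the latter is the stronger statement. Fix $t>0$. Using the upper bound $p^D_t(x,y)\le p_t(x,y)\lesssim t^{-d/\alpha}$ and $p^D_t(x,y)\le P_D$-type boundary decay (or more simply the bound $p^D_t(x,y)\le C(t)\,\mpr^x(\tau_D>t/2)^{?}$ via \eqref{e.CKpD}), one sees that for $f\in B_b(D)$ the function $P^D_tf(x)=\int_D f(y)p^D_t(x,y)\,\dx y$ is bounded, and continuity in $x$ follows from joint continuity of $(x,y)\mapsto p^D_t(x,y)$ on $D\times D$ together with dominated convergence, the dominating function being $\|f\|_\infty\sup_{x}p^D_t(x,\cdot)\in L^1(D)$ by \eqref{e.tfsp2}. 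That $P^D_tf$ vanishes at $\partial D$ (and hence lies in $C_0(D)$) follows because, as $x\to\partial D$, $\mpr^x(\tau_D>t)=\int_D p^D_t(x,y)\,\dx y\to 0$ by the regularity \eqref{e.reg} of $D$ and the strong Markov property, so $|P^D_tf(x)|\le\|f\|_\infty\mpr^x(\tau_D>t)\to 0$. Specializing to $f\in C_0(D)$ gives $P^D_tC_0(D)\subset C_0(D)$, i.e. the mapping part of the Feller property; strong continuity of the orbit $t\mapsto P^D_tf$ on $[0,\infty)$ for $f\in C_0(D)$ is then the standard consequence of $P^D_tf\to f$ pointwise as $t\to 0$, the semigroup law, and a density/equicontinuity argument (or one may simply cite that $(P^D_t)$ is the killed semigroup of the Feller process $Y$, which is classical, e.g. via \cite{MR1329992}).

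For the \emph{$C_b$-Feller} property I would, referring forward to Definition \ref{d.cbsemigroup}, show $P^D_tf\to f$ uniformly on compact $H\Subset D$ as $t\to 0^+$, for $f\in C_b(D)$. Write $P^D_tf(x)-f(x)=\big(P_t\tilde f(x)-\tilde f(x)\big)-\mE^x[\tilde f(Y_t);\,\tau_D\le t]$ for $x\in H$, where $\tilde f$ is any bounded continuous extension of $f$ to $\Rd$ (here I use $P^D_tf(x)=\mE^x[\tilde f(Y_t);\,t<\tau_D]$). The first term tends to $0$ uniformly on $H$ because $Y$ is Feller (indeed $p_t(x,\cdot)$ is an approximate identity, uniformly in $x$, by \eqref{e.sc}–\eqref{e.sc1}); the second is bounded by $\|f\|_\infty\sup_{x\in H}\mpr^x(\tau_D\le t)$, and $\sup_{x\in H}\mpr^x(\tau_D\le t)\to 0$ as $t\to 0$ by Lemma \ref{l.uniformstopping} (with $K=H$), since $\mpr^x(\tau_D\le t)=1-\mpr^x(\tau_D>t)\le 1-\eta(H,T)$ is not quite enough by itself — so instead I would argue directly: $\mpr^x(\tau_D\le t)\le\mpr^x(\tau_{B(x,r)}\le t)=\mpr^0(\tau_{B(0,r)}\le t)\to 0$ as $t\to 0$ by \eqref{e.lbm} with $r=\dist(H,D^c)$, uniformly in $x\in H$. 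This handles $C_b$-Feller.

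The main obstacle, and the only point requiring genuine care rather than bookkeeping, is the claim $P^D_tB_b(D)\subset C_b(D)$ — the \emph{strong} Feller property — because it is exactly where one cannot avoid using a quantitative property of $p^D_t$ beyond the translation-invariant bound \eqref{e.sc}: one needs $y\mapsto p^D_t(x,y)$ to be in $L^1(D)$ uniformly for $x$ in compact subsets (clear from \eqref{e.tfsp2}), and joint continuity of $p^D_t$ on $D\times D$ (quoted in the text right after \eqref{e.CKpD}), and then dominated convergence closes the argument. I do not expect to need Jakubowski's sharp estimates in full; the stated joint continuity and positivity of $p^D_t$, the bound $p^D_t\le p_t$, and the integral identities \eqref{e.tfsp2}, \eqref{eq:a1} suffice. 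I would present the proof in the order: (1) strong Feller and $P^D_tB_b(D)\subset C_0(D)$; (2) Feller (mapping and strong continuity); (3) $C_b$-Feller; since (1) is the workhorse and the others reduce to it plus soft facts about $Y$.
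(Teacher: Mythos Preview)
Your argument is essentially correct and self-contained, whereas the paper's proof is a two-line appeal to the literature: Chung \cite{chung86} shows that if the free semigroup $(P_t)$ is Feller and strong Feller and $D$ is regular, then the killed semigroup $(P^D_t)$ inherits both properties (which also yields $P^D_tB_b(D)\subset C_0(D)$ via the bound $|P^D_tf(x)|\le\|f\|_\infty\,\mpr^x(\tau_D>t)$ that you invoke as well); and Schilling \cite[Theorem 3.1]{sch98} then gives the $C_b$-Feller property as an automatic consequence of Feller. Your route has the advantage of being explicit and of isolating exactly which features of $p^D_t$ are used (joint continuity, the domination $p^D_t\le p_t$, and regularity of $\partial D$), at the cost of some length; the paper's route is shorter but outsources the work.

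One genuine gap: in the $C_b$-Feller step you take $\tilde f$ to be ``any bounded continuous extension of $f$ to $\Rd$'', but for a general $f\in C_b(D)$ no such extension need exist (think of $\sin(1/\dist(x,\partial D))$). The fix is easy: take $\tilde f=f\one_D$, which is bounded and continuous at every point of $D$; the approximate-identity argument you sketch for $P_t\tilde f(x)-\tilde f(x)\to 0$ uniformly on $H\Subset D$ still goes through, since for $|y|<\dist(H,D^c)$ one has $\tilde f(x+y)=f(x+y)$ and uniform continuity of $f$ on a compact neighborhood of $H$ applies, while the contribution from $|y|\ge\delta$ is $O\big(\int_{|y|\ge\delta}p_t(y)\dx y\big)\to 0$. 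A smaller point: your justification of the dominating function via \eqref{e.tfsp2} is off, since \eqref{e.tfsp2} gives $\int_D p^D_t(x,y)\dx y\le 1$ for each fixed $x$, not integrability of $\sup_x p^D_t(x,\cdot)$; the clean dominant is the constant $p_t(0)\lesssim t^{-d/\alpha}$, integrable over the bounded set $D$.
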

\begin{proof}
\kb{Due to \cite[page 68]{chung86}, the} semigroup $\mk{P_D}$ is a Feller semigroup and enjoys the strong Feller property, because these properties hold true for the unkilled semigroup $P$ and $D$ is regular. At this point \cite[\mk{Lemma} 3.1]{sch98} implies that $\mk{P_D}$ is a $C_b$-semigroup.
\mk{Note that \cite{sch98} is actually
concerned with Feller semigroups on $\R^d$, but
the proof of \cite[Lemma 3.1]{sch98} applies to
open subsets 
of $\Rd$, \kl{too}.}
\kb{The last statement of Lemma~\ref{l.pd} can be proved as in \cite[Theorem 2.7]{MR1329992}.}
\end{proof}

\kb{For our development in Section~\ref{s.sg}, we need to}
characterize the $C_b$-generator of the $C_b$-Feller semigroup $\mk{P_D}$\kb{;} see \ref{d.cbgenerator} for the definition. 
The operator is a (typically strict) extension of the generator of the Feller semigroup on $C_0(D)$. 
Whereas the latter is defined as the derivative at $t=0$ of the orbits with respect to the norm $\|\cdot\|_\infty$, the $C_b$-generator is defined via the Laplace transform of the semigroup. 
\kb{To this end we will use a general} Theorem \ref{t.generatorchar}, \kl{providing} several equivalent characterizations of 
$C_b$-generator\kb{s}\kl{, }
\kb{and the following \kb{simple} lemma.}

\begin{lem}\label{l.stopestimate}
There exists $c=c(D,\alpha)>0$ such that if $D\subset B(0,R)$, then 
\[
\Pb^x (\tau_D \leq t ; Y_{\tau_D} \in B(0,2R)^c) \leq c R^{-\alpha } \Pb^x (\tau_D \leq t),\qquad x\in D,\; t>0.
\]
\end{lem}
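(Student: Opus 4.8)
The plan is to reduce the estimate to the Ikeda--Watanabe formula \eqref{eq:IW} together with a pointwise comparison of jump intensities. First I would apply \eqref{eq:IW}, once with exit set $B(0,2R)^c$ (which lies in $D^c$ because $D\subset B(0,R)\subset B(0,2R)$) and once with $B=D^c$, to get
\[
\Pb^x(\tau_D\le t;\,Y_{\tau_D}\in B(0,2R)^c)=\int_0^t\ds\int_D\dv\int_{B(0,2R)^c}p_s^D(x,v)\,\nu(v,z)\dz
\]
and the same expression with $\int_{B(0,2R)^c}$ replaced by $\int_{D^c}$, which equals $\Pb^x(\tau_D\le t)$ since $\Pb^x(Y_{\tau_D}\in D^c)=1$. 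As $p_s^D(x,v)\ge0$, it then suffices to find a constant $c=c(D,\alpha)>0$ such that
\[
\int_{B(0,2R)^c}\nu(v,z)\dz\ \le\ c\,R^{-\alpha}\int_{D^c}\nu(v,z)\dz\qquad\text{for all }v\in D,
\]
and to integrate this inequality against $p_s^D(x,v)\,\dv\,\ds$.

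For the left-hand side I would use that, for $v\in D\subset B(0,R)$ and $|z|\ge2R$, one has $|z-v|\ge|z|-|v|>R$, so $B(0,2R)^c\subset\{z:|z-v|>R\}$ and, by a direct computation in polar coordinates, $\int_{B(0,2R)^c}\nu(v,z)\dz\le\int_{\{|w|>R\}}\nu(w)\dw=\kappa R^{-\alpha}$ with $\kappa=\kappa(d,\alpha)>0$. For the right-hand side I would fix once and for all a radius $R_0=R_0(D)$ with $\overline D\subset B(0,R_0)$; then for $v\in D$ and $|z-v|>2R_0$ one has $|z|\ge|z-v|-|v|>R_0$, hence $\{z:|z-v|>2R_0\}\subset B(0,R_0)^c\subset D^c$, and the same computation gives $\int_{D^c}\nu(v,z)\dz\ge\int_{\{|w|>2R_0\}}\nu(w)\dw=\kappa(2R_0)^{-\alpha}>0$. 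Dividing, the displayed inequality holds with $c=(2R_0)^\alpha$, which depends only on $D$ and $\alpha$; this proves the lemma.

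I do not anticipate a genuine obstacle. The only subtlety is that the lower bound on the total exit intensity $\int_{D^c}\nu(v,z)\dz$ must be expressed through a radius $R_0$ that measures the actual size of $D$, not through the (possibly much larger) radius $R$ of the statement: using $R$ in both estimates makes the powers of $R$ cancel and yields only the trivial bound with $c=2^\alpha$. The gain of the factor $R^{-\alpha}$ comes from the boundedness of $D$, which forces the jump intensity from every point of $D$ into $D^c$ to be bounded below uniformly in $R$.
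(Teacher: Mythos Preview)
Your proof is correct and follows essentially the same route as the paper: both apply the Ikeda--Watanabe formula, bound $\nu(v,B(0,2R)^c)$ above by a constant times $R^{-\alpha}$, and bound $\nu(v,D^c)$ below by a positive constant depending only on $D$ (the paper simply writes $c_2\coloneqq\inf_{y\in D}\nu(y,D^c)>0$, whereas you make this lower bound explicit via the fixed radius $R_0$). Your closing remark about why one must use $R_0$ rather than $R$ in the lower bound is exactly the point.
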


\begin{proof}
We can find a constant $c_1$ such that $\nu (y, B(0,2R)^c) \leq c_1 R^{-\alpha}$ for $y\in B(0,R)$. By the Ikeda--Watanabe formula
\eqref{eq:IW},
\begin{align}
\Pb^x (\tau_D \leq t, Y_{\tau_D} \in B(0,R)^c) & = \int_0^t\dx s \int_D\dx z \, \mk{p_D(s,x,y)}\nu(y, B(0,R)^c) \notag\\
& \leq c_1 R^{-\alpha} \int_0^t\dx s \int_D\dx y \, \mk{p_D(s,x,y)}.\label{eq.stop1}
\end{align}
On the other hand, \kb{by \eqref{e.fsp},}
\begin{align}
\Pb^x(\tau_D \leq t ) 
& \geq c_2 \int_0^t\, ds \int_D\, \dx y \, 
\mk{p_D(s, x,y)},\label{eq.stop2}
\end{align}
where $c_2 \coloneqq \inf_{y\in D} \nu (y, D^c)>0$. The result
follows with $c=c_1/c_2$.
\end{proof}

The final ingredient to characterize the  $C_b$-generator $\cA^D$ of $\mk{P_D}$  in terms of the (unkilled) semigroup $P=\mk{P_\Rd}$ is the
\emph{Dynkin operator} $\mathscr{D}$, defined 
\kb{as}
\begin{equation}\label{eq.dynkinoperator}
\mathscr{D}u(x) \kb{\coloneqq} \lim_{x\to 0} \frac{\expect^x u(Y_{\tau_{B(x,r)}}) - u(x)}{\expect^x \tau_{B(x,r)}},
\end{equation}
for any function $u \in C_b(\Rd)$ and $x\in \Rd$, for which the limit exists. We also denote by $\tilde u$ the extension of a function $u\in C_0(D)$ to $\R^d$ by zero.

\begin{prop}\label{p.killedgenerator}
Let $u,f \in C_b(D)$. Then the statement $u\in D(\cA^D)$ and $\cA^Du =f$ is equivalent to each of the following:
\begin{enumerate}
[{\rm(i)}]
\item $\sup_{t\in (0,1)} \|t^{-1}(\mk{P_D(t)}u - u)\|_\infty <\infty$ and
\begin{equation}
\label{eq.killedgen}
f(x) = \lim_{t\to 0} \frac{\mk{P_D(t)}u(x) - u(x)}{t}, \qquad x\in D.
\end{equation}
\item $u\in C_0(D)$ and 
\begin{equation}
\label{eq.killedgen2}
f(x) = \lim_{t\to 0}\frac{\mk{P(t)}\tilde u(x) - \tilde u(x)}{t}, \qquad x\in D.
\end{equation}
\item $u\in C_0(D)$ and 
\begin{equation}
\label{eq.killedgen3}
f(x) = \lim_{\eps\to 0^+} \int_{\{|y-x|>\eps\}} \big[\tilde u(y) - \tilde u(x)\big] \nu (x, y) \dx y, \qquad x\in D.
\end{equation}
\end{enumerate}
\end{prop}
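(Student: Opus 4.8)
The plan is to prove the equivalence of the three statements with the assertion $u\in D(\cA^D)$, $\cA^D u = f$ by establishing the cycle of implications in the order it is cleanest: first relate (i) to membership in $D(\cA^D)$ using the abstract characterization of the $C_b$-generator (Theorem~\ref{t.generatorchar}), then pass from (i) to (ii) by comparing the killed semigroup $P^D$ to the full semigroup $P$ via the exit time $\tau_D$ and controlling the error with Lemma~\ref{l.stopestimate}, and finally identify (ii) with (iii) by recognizing the right-hand side of \eqref{eq.killedgen2} as the pointwise action of the full generator $\Delta^{\alpha/2}$ on $\tilde u$, which is computed by the Dynkin operator $\mathscr D$ and has the integral form \eqref{e.LKf}--\eqref{eq:L-def}.

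The equivalence of the $C_b$-generator condition with (i) is essentially the content of the (assumed) Theorem~\ref{t.generatorchar}: the uniform bound $\sup_{t\in(0,1)}\|t^{-1}(P^D_t u - u)\|_\infty<\infty$ together with pointwise convergence of the difference quotients to $f\in C_b(D)$ characterizes $u\in D(\cA^D)$ with $\cA^D u = f$, because $P^D$ is a $C_b$-Feller semigroup (Lemma~\ref{l.pd}) and the $C_b$-generator is defined through the Laplace transform. The only point to check here is that $C_b$-convergence of $t^{-1}(P^D_t u - u)$ is not needed — bounded pointwise convergence suffices — which is exactly what the equivalent characterizations in Theorem~\ref{t.generatorchar} are meant to provide, so this step should be short.

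For (i)$\Leftrightarrow$(ii): since $\tilde u$ is the zero-extension of $u$, Hunt's formula gives $P_t\tilde u(x) - P^D_t u(x) = \expect^x[\tilde u(Y_t);\,\tau_D\le t]$ for $x\in D$, and the key observation is that because $D$ is bounded and $u$ vanishes at $\partial D$ (once we know $u\in C_0(D)$), the extra term is $o(t)$ pointwise on $D$ and stays $O(t)$ uniformly; Lemma~\ref{l.stopestimate} is the tool that localizes the exit position near $D$, where $\tilde u$ is small, so that $\expect^x[\tilde u(Y_t);\tau_D\le t] = o(\Pb^x(\tau_D\le t)) = o(t)$. Conversely, if (ii) holds one recovers $u\in C_0(D)$ from the $C_b$-generator framework (an element of $D(\cA^D)$ with $P^D_t B_b(D)\subset C_0(D)$ lies in $C_0(D)$), and the same comparison yields \eqref{eq.killedgen}. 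The main obstacle is this estimate: one must show that the boundary contribution $\expect^x[\tilde u(Y_t);\,\tau_D\le t]$ is negligible compared to $t$ as $t\to 0$, uniformly enough to preserve the uniform bound in (i); this requires combining the continuity of $\tilde u$ with $\tilde u|_{\partial D}=0$, Lemma~\ref{l.stopestimate} (to cut off the far region $B(0,2R)^c$), and a standard estimate $\Pb^x(\tau_D\le t)=O(t)$ on compacts from the Ikeda--Watanabe formula and the bound $\nu(y,D^c)\le C$ for $y\in D$.

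Finally, (ii)$\Leftrightarrow$(iii) is the identification, for $u\in C_0(D)\subset C_0(\Rd)$, of the generator of the full Feller semigroup $(P_t)$ on $C_0(\Rd)$ with the fractional Laplacian: the pointwise limit in \eqref{eq.killedgen2} equals $\mathscr D\tilde u(x)$ by Dynkin's formula, and $\mathscr D\tilde u(x)$ in turn equals the principal-value integral $\lim_{\eps\to0^+}\int_{\{|y-x|>\eps\}}[\tilde u(y)-\tilde u(x)]\,\nu(x,y)\dx y$ whenever either side exists — this is the classical representation of $\Delta^{\alpha/2}$ recalled in \eqref{eq:L-def}, valid not just for $C_c^\infty$ but for any $u\in C_0(\Rd)$ for which the difference quotients converge boundedly. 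I expect this step to be routine given the preliminaries, with the only care needed being that the equivalence is understood in the sense ``left side exists and equals $f$ iff right side exists and equals $f$'', so that no a priori regularity of $u$ beyond $C_0$ is assumed. Assembling (i)$\Leftrightarrow$(ii)$\Leftrightarrow$(iii) with the opening reduction to the $C_b$-generator completes the proof.
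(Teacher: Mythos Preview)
Your overall architecture is close to the paper's, but two of the steps, as you describe them, contain genuine gaps.

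\textbf{The comparison in (i) $\Leftrightarrow$ (ii).} Your formula $P_t\tilde u(x)-P^D_t u(x)=\expect^x[\tilde u(Y_t);\,\tau_D\le t]$ is correct, but the control mechanism you describe is not. You say Lemma~\ref{l.stopestimate} ``localizes the exit position near $D$, where $\tilde u$ is small''. But the integrand is $\tilde u(Y_t)$, not $\tilde u(Y_{\tau_D})$; on $\{\tau_D\le t\}$ the process may well have returned deep into $D$ where $\tilde u$ is not small at all. Moreover $\tilde u(Y_{\tau_D})=0$ identically (since $Y_{\tau_D}\in D^c$), so ``$\tilde u$ small at the exit position'' is not the right smallness. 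What the paper does is apply the strong Markov property to rewrite the term as $\expect^x[P_{t-\tau_D}\tilde u(Y_{\tau_D});\,\tau_D\le t]=\expect^x[(P_{t-\tau_D}\tilde u-\tilde u)(Y_{\tau_D});\,\tau_D\le t]$, and then the smallness comes from the \emph{uniform} convergence $P_s\tilde u\to\tilde u$ on $\Rd$ (valid because $\tilde u\in C_0(\Rd)$), combined with Lemma~\ref{l.stopestimate} to handle the region where $|Y_{\tau_D}|$ is large. Your sketch omits both the strong Markov step and this orbit-continuity ingredient.

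\textbf{The direction (ii) $\Rightarrow$ (i).} You write that ``the same comparison yields \eqref{eq.killedgen}'', but the comparison cannot produce the uniform bound $\sup_{t\in(0,1)}\|t^{-1}(P^D_t u-u)\|_\infty<\infty$: the error $\Pb^x(\tau_D\le t)/t$ blows up as $x\to\partial D$, and (ii) contains no uniform hypothesis to begin with. The paper handles this direction differently, by resolvent surjectivity: one sets $v\coloneqq R^D_\lambda(\lambda u-f)\in D(\cA^D)$, uses the already-proved implication (i) $\Rightarrow$ (ii) to see that $v$ satisfies (ii) as well, and then argues $u=v$ (this is the content deferred to \cite[Theorem~2.3]{blm18}).

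\textbf{A different route to (iii).} Your plan links (ii) and (iii) directly, asserting that the pointwise semigroup limit equals the Dynkin operator and hence the singular integral. This equivalence for merely $C_0$ functions with only pointwise convergence is more delicate than you suggest. The paper instead goes (i) $\Rightarrow$ (iii): from (i) one gets that $u(Y_{t\wedge\tau_D})-u(x)-\int_0^{t\wedge\tau_D}f(Y_s)\,ds$ is a $\Pb^x$-martingale (via the full generator as in \cite[Ch.~4]{ek}), applies optional stopping at $\tau_{B(x,r)}$, uses scaling to obtain $\mathscr{D}u(x)=f(x)$, and then invokes \cite[Lemma~3.3]{MR3613319} for the singular integral. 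The implication (iii) $\Rightarrow$ (ii) is then a direct citation of \cite[Lemma~3.4]{MR3613319}. This martingale route bypasses the need to equate pointwise semigroup derivatives with Dynkin's operator in general.
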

\begin{proof}
That $u\in D(\cA^D)$ and $\cA^Du=f$ is equivalent to (i) is an immediate consequence of \mk{the equivalence of (i) and (iii) in} Theorem \ref{t.generatorchar}.
We shall prove (iii) $\Rightarrow$ (ii) $\Leftrightarrow$ (i) $\Rightarrow$ (iii). In the proof of (i) $\Leftrightarrow$ (ii), we follow the ideas of \cite[Theorem 2.3]{blm18} which, however, is concerned with the space $C_0(D)$ instead of $C_b(D)$.\smallskip

(i) $\Rightarrow$ (ii). By Lemma \ref{l.pd} we have $\mk{P_D(t)}f \in C_0(D)$ for all $t>0$. 
It follows that the Laplace transform of $\mk{P_D}$ takes values in $C_0(D)$ which, in turn, implies that $D(\cA^D)\subset C_0(D)$. To prove \eqref{eq.killedgen2}, we compare the difference quotients in \eqref{eq.killedgen} and \eqref{eq.killedgen2}. Arguing as in the proof of \cite[Theorem 2.3]{blm18}, we see that
\begin{align*}
& \big[\mk{P_D(t)}u(x) - u(x)\big] - \big[\mk{P(t)}\tilde u (x) - \tilde u (x)\big]=\mk{P_D(t)} \tilde u(x)-\mk{P(t)} 
\tilde u(x)\notag\\
 &=  \, -\expect^x\big[\mk{P(t-\tau_D)} \tilde u(Y_{\tau_D});\tau_D\le t\big]=
  \expect^x \big[ \tilde u(Y_{\tau_D}) - \mk{P(t-\tau_D)}\tilde u(Y_{\tau_D}); \tau_D \leq t\big]\notag\\
 &= \,\expect^x \big[ \tilde u(Y_{\tau_D}) - \mk{P(t-\tau_D)}\tilde u(Y_{\tau_D}); \tau_D \leq t,\
 Y_{\tau_D} \in B(0,2R)\big]\notag\\ 
 &  \quad +
 \expect^x \big[ (\tilde u(Y_{\tau_D}) - \mk{P(t-\tau_D)}\tilde u(Y_{\tau_D})); \tau_D \leq t,\
 Y_{\tau_D} \in B(0,2R)^c\big]\notag\\
 &=:  \,I_1 + I_2.
\end{align*}
Given $\eps>0$, we may pick $R$ so large that $D \subset B(0,R)$ and $R^{-\alpha} \leq \eps$. \kb{If $s\to 0$ then}  
$\mk{P(s)}\tilde u \to \tilde u$ 
uniformly on $\Rd$, 
in particular on $B(0,R)$. We may thus pick $t_0>0$ so that $|I_1| \leq \eps \Pb^x(\tau_D\leq t)$ for all $t\leq t_0$. As for $I_2$, we infer from Lemma \ref{l.stopestimate} that
\[
|I_2| \leq 2\|u\|_\infty c R^{-\alpha} \Pb^x(\tau_D\leq t).
\]
By the choice of $R$, we see that
\[
|I_1 + I_2| \leq C\eps \Pb^x(\tau_D\leq t)
\]
\mk{for $t\leq t_0$. Next pick $r>0$ such that $B(x,r) \subset D$ and note that $\Pb^x(\tau_D \leq t) \leq \Pb^x(\tau_{B(x,r)} \leq t)$.
By \cite[Proposition 2.27(d) and Theorem 5.1]{MR3156646} or  L\'evy inequality and \cite[Remark 1]{MR3350043}, we have $\Pb^x(\tau_{B(x,r)}\leq t) \leq Mt$
for some $M\in (0,\infty)$ and all $t\leq t_0$. Therefore for all such $t$,}
\[
\mk{\frac{|P_D(t)u(x) - P(t)\tilde{u}(x)|}{t} \leq MC\eps}. 
\]
\mk{Since $\eps>0$ was arbitrary, the difference quotient in \eqref{eq.killedgen2} has the same limit as that in \eqref{eq.killedgen}.\smallskip}

(ii) $\Rightarrow$ (i). Since $(\lambda - \cA^D)^{-1}$ exists for $\lambda >0$ and maps $D(\cA^D)$ onto $C_b(D)$, this can be proved as in \cite[Theorem 2.3]{blm18}.\smallskip

(i) $\Rightarrow$ (iii). Adding an isolated point $\dagger$ as a cemetery state to $D$, we can consider the \textit{stopped process} $(Y_{t\wedge \tau_D})_{t\geq 0}$ as a Markov process with state space $E= D\cup\{\dagger\}$. Here $Y_{t\wedge \tau_D}=\dagger$ for
$t\geq\tau_D$. The transition semigroup of this process is $\mk{P_D}$, \kb{if} we extend a function $g$ in $C_b(D)$ to $E$ by setting $g(\dagger)= 0$. 
It follows from the implication (iii) $\Rightarrow$ (ii) in Theorem \ref{t.generatorchar}, that for $u,f$ as in (i) the pair $(u,f)$ belongs to the \emph{full generator} in the sense of \cite[Equation (5.5) of Chapter 1]{ek}. By \cite[Proposition 4.1.7]{ek}, the process
\[
u(Y_{t\wedge \tau_D}) - u(x) - \int_0^{t\wedge \tau_D}f(Y_s)\dx s
\]
is a martingale with respect to $\Pb^x$. Now consider the stopping time $\tau_{B(x,r)}$, where $r>0$ is so small that $B(x,r) \subset D$. Then $\tau_{B(x,r)}\wedge\tau_D=\tau_{B(x,r)}$.
Noting that $u$ and $f$ are bounded functions, it follows from optional stopping that
\[
\expect^x u(Y_{B(x,r)}) - u(x) = \expect^x \int_0^{\tau_{B(x,r)}} f(Y_s)\dx s.
\]
Then,
from the discussion of scaling in Subsection~\ref{sec:pot-theo-notions}, we get
\begin{align*}
&\frac{\expect^x u(Y_{\tau_{B(x,r)}}) - u(x)}{\expect^x \tau_{B(x,r)}} = \frac{1}{\expect^x \tau_{B(x,r)}} \expect^x \int_0^{\tau_{B(x,r)}} f(Y_s)\, \dx s\\
&=\frac{1}{\expect^x \tau_{B(x,1)}} \expect^x r^{-\alpha}\int_0^{r^\alpha\tau_{B(x,1)}} f(Y_s)\, \dx s=\frac{1}{\expect^x \tau_{B(x,1)}} \expect^x \int_0^{\tau_{B(x,1)}} f(Y_{r^\alpha t})\, \dx t\\
&\to\frac{1}{\expect^x \tau_{B(x,1)}} \expect^x \tau_{B(x,1)} f(x)\, \dx s=f(x)\quad \mbox{ as } r\to 0,
\end{align*}
because $f$ is continuous and bounded and we can use the dominated convergence theorem.
This shows that $\mathscr{D}u(x) = f(x)$, for all $x\in D$. At this point \cite[Lemma 3.3]{MR3613319} yields (iii).\smallskip

(iii) $\Rightarrow$ (ii). This follows from \cite[Lemma 3.4]{MR3613319}.
\end{proof}

\subsection{The Dirichlet problem}
\label{sect.dirichlet}

\mk{We are} 
interested in the \kb{following} operators
\begin{equation}\label{eq:1mu}
(\mk{H_D(\lambda)} g)(x)\coloneqq \int_0^\infty \dx s \int_{D}\dx v \int_{D^c} \dx z \,e^{-\lambda s} p_s^D(x,v)\nu(v,z)g(z),
\end{equation}
where \kb{$x\in D$}, $\lambda \geq 0$, and $g$ is a nonnegative or integrable function on $D^c$.
From the Ikeda--Watanabe formula it is immediate that 
\begin{equation}\label{r.fpdl}
H_D(\lambda) g (x) = \expect^x \big[ e^{-\lambda \tau_D}g(Y_{\tau_D}) \big], \quad x\in D.
\end{equation}

\begin{lem}\label{l.philambda_boundary-g}
If $g \in B_b(D^c)$ then $\mk{H_D(\lambda)} g\in C_b(D)$. If $g$ is also continuous at $\partial D$, then $\mk{H_D(\lambda)}g$ extends continuously to $\overline D$ and the extension equals $g$ on $\partial D$.
\end{lem}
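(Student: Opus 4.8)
The plan is to work from the probabilistic identity \eqref{r.fpdl}, $P^\lambda_D g(x)=\expect^x[e^{-\lambda\tau_D}g(Y_{\tau_D})]$ (equivalently from the analytic formula \eqref{eq:1mu}), and to lean on Lemma~\ref{l.pd}. Boundedness is immediate: $\Pb^x(\tau_D<\infty)=1$ and $0\le e^{-\lambda\tau_D}\le 1$ give $|P^\lambda_D g(x)|\le\|g\|_\infty$ for all $x\in D$, so $P^\lambda_D g$ is a bounded (Borel) function on $D$; by linearity it suffices to treat $g\ge 0$ in the continuity arguments.

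For continuity \emph{inside} $D$, fix $\eta>0$ and split the time integral in \eqref{eq:1mu} at $s=\eta$. For $s>\eta$ insert the Chapman--Kolmogorov identity \eqref{e.CKpD}, $p^D_s(x,v)=\int_D p^D_\eta(x,w)p^D_{s-\eta}(w,v)\dw$; this yields, for $x\in D$,
\[
P^\lambda_D g(x)=A_\eta(x)+e^{-\lambda\eta}\,P^D_\eta\big(P^\lambda_D g\big)(x),
\]
where $A_\eta(x)=\int_0^\eta\!\int_D\!\int_{D^c}e^{-\lambda s}p^D_s(x,v)\nu(v,z)g(z)\dz\dv\ds$ and $P^\lambda_D g$ is read as a function on $D$ (the same identity follows from the Markov property of $Y$ at the deterministic time $\eta$ on the event $\{\tau_D>\eta\}\in\mathcal F_\eta$). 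By \eqref{eq:a1} one has $0\le A_\eta\le\|g\|_\infty(\one-P^D_\eta\one)$, while $P^\lambda_D g\in B_b(D)$, so Lemma~\ref{l.pd} gives both $P^D_\eta(P^\lambda_D g)\in C_0(D)$ and $P^D_\eta\one\to\one$ uniformly on compact subsets of $D$ as $\eta\to0^+$ (the $C_b$-Feller property; alternatively by Dini's theorem, since $P^D_\eta\one$ is continuous and $\uparrow\one$). Hence $A_\eta\to0$ locally uniformly on $D$, so $P^\lambda_D g$ is a locally uniform limit of continuous functions; thus $P^\lambda_D g\in C(D)$, and with boundedness $P^\lambda_D g\in C_b(D)$.

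For the behaviour at $\partial D$, fix $x_0\in\partial D$ and let $D\ni x\to x_0$. I would use two facts. First, by \eqref{eq:a1} and Lemma~\ref{l.pd}, $\Pb^x(\tau_D>\eta)=P^D_\eta\one(x)\in C_0(D)$, so $\Pb^x(\tau_D>\eta)\to0$ as $x\to x_0$. Second, the regularity of $x_0$ recorded in \eqref{e.reg} entails that $Y_{\tau_D}\to x_0$ in $\Pb^x$-probability as $x\to x_0$, i.e.\ $\Pb^x(|Y_{\tau_D}-x_0|\ge\delta)\to0$ for every $\delta>0$; this is classical, and can also be deduced from the first fact together with $\Pb^x(\sup_{0\le s\le\eta}|Y_s-x|\ge r)\to0$ as $\eta\to0$ and the Poissonian bound on the number of jumps of $Y$ of size $\ge r$ in time $\eta$ (a path exiting $D$ near $x_0$ and landing far from $x_0$ must either wander far or make a large jump). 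Granting these, fix $\eps>0$, choose $\delta>0$ with $|g(z)-g(x_0)|\le\eps$ whenever $z\in D^c$ and $|z-x_0|<\delta$ (continuity of $g$ at $x_0$), and $\eta>0$ with $1-e^{-\lambda\eta}\le\eps$. Since $Y_{\tau_D}\in D^c$ $\Pb^x$-a.s.,
\[
\big|P^\lambda_D g(x)-g(x_0)\big|\le\expect^x\big[e^{-\lambda\tau_D}|g(Y_{\tau_D})-g(x_0)|\big]+\|g\|_\infty\,\expect^x\big[1-e^{-\lambda\tau_D}\big];
\]
the first term is at most $\eps+2\|g\|_\infty\Pb^x(|Y_{\tau_D}-x_0|\ge\delta)$, and, splitting according to $\{\tau_D\le\eta\}$, the second is at most $\|g\|_\infty\big((1-e^{-\lambda\eta})+\Pb^x(\tau_D>\eta)\big)$. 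Letting $x\to x_0$, then $\eta\to0$, then $\eps\to0$ gives $\lim_{D\ni x\to x_0}P^\lambda_D g(x)=g(x_0)$. Extending $P^\lambda_D g$ to $\partial D$ by the values of $g$ and combining with the interior continuity and with the continuity of $g$ along $\partial D$ produces a function in $C(\overline D)$ that agrees with $g$ on $\partial D$.

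I expect the main obstacle to be the second boundary fact — that the regularity of $\partial D$ forces the exit position $Y_{\tau_D}$ to concentrate at $x_0$ as $D\ni x\to x_0$ — together with pinning down a clean reference (or self-contained argument) for it; once this and $\Pb^x(\tau_D>\eta)\to0$ are available, the dependence on $\lambda$ is handled entirely by the elementary bound $e^{-\lambda\tau_D}\le 1$, and the interior statement is short given Lemma~\ref{l.pd}, the only point there being the identity for $P^\lambda_D g$ coming from \eqref{e.CKpD} and the membership $P^D_\eta(P^\lambda_D g)\in C_0(D)$.
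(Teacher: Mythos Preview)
Your argument is correct, and the boundary part is essentially the same as the paper's: both split $|P^\lambda_D g(x)-g(x_0)|$ into a piece controlled by $\expect^x[1-e^{-\lambda\tau_D}]$ (handled via $\Pb^x(\tau_D>\delta)\to 0$ at regular boundary points) and a piece $\expect^x|g(Y_{\tau_D})-g(x_0)|$ (handled by the regularity of $D$ for the Dirichlet problem). The ``second boundary fact'' you flag as the main obstacle is exactly what the paper sweeps into the phrase ``since $D$ is regular for the Dirichlet problem,'' so you are not missing anything the paper supplies.

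The interior continuity argument, however, is genuinely different. The paper observes that $P^0_D\one=\one$ is continuous in $x$, and invokes Vitali's convergence theorem to deduce uniform integrability of the integrand $p^D_s(x,v)\nu(v,z)$ over compact $x$-sets; continuity of $P^\lambda_D g$ then follows by majorization. Your route instead uses the Chapman--Kolmogorov identity to write $P^\lambda_D g = A_\eta + e^{-\lambda\eta}P^D_\eta(P^\lambda_D g)$ and appeals to Lemma~\ref{l.pd}: the strong Feller property makes the second term continuous, and the $C_b$-Feller property makes $A_\eta\to 0$ locally uniformly. Your approach is arguably more transparent and uses only tools already assembled in the paper, at the cost of a small extra computation; the paper's Vitali argument is shorter but relies on a slightly more sophisticated integration-theoretic principle.
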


\begin{proof}
The result is well known, but the following argument is of some interest.
We have $\mk{H_D(0)\one_{D^c}} =\kb{\one_D}$
on $D$ by \eqref{eq:1}. Therefore, as a consequence of the continuity \mk{(with respect to $x\in D$)} of the integrand \mk{in \eqref{eq:1mu} with $g\equiv 1$ and $\lambda = 0$}, by Vitali's convergence theorem (see, e.g., \cite[Chapter~22]{MR3644418}), the integrand is uniformly integrable for $x$ in every compact subset of $D$. By majorization, the integrand in \eqref{eq:1mu} \mk{for general $g$ and $\lambda$} is also uniformly integrable, \mk{so} $\mk{H_D(\lambda)} g\in C_b(D)$. 

Now assume that $g$ is continuous at $\partial \mk{D}$ and let $x_0\in \partial D$ and $x\in D$.
Taking  \eqref{eq:1} into account again, we find
\begin{align*}
|\mk{H_D(\lambda)}&  g(x) - g(x_0)|  \leq  \int_0^\infty \dt  \int_D \dx v \int_{D^c} \dx z \, \big(1-e^{-\lambda t}\big) 
\mk{p_D(t, x,v)}\nu (v,z) |g(x_0)|\\
& \qquad + \int_0^\infty \dt  \int_D \dx v \int_{D^c} \dx z \,  e^{-\lambda t} \mk{p_D(t, x,v)}\nu (v,z) \big| g(z) -g(x_0)\big|\\
& \leq \|g\|_\infty \int_0^\infty \dt  \int_D \dx v \int_{D^c} \dx z \, \big(1-e^{-\lambda t}\big) \mk{p_D(t,x,v)}\nu (v,z)\\
& \qquad + \int_0^\infty \dt  \int_D \dx v \int_{D^c} \dx z \,  \mk{p_D(t, x,v)}\nu (v,z) \big| g(z) - g(x_0)\big|\\
&  =\colon I_1(x) + I_2(x).
\end{align*}
For arbitrary $\delta>0$ we get
\begin{align*}
I_1(x) & = \|g\|_\infty \, \expect^x \big[ 1- e^{-\lambda \tau_D}\big]
\leq \|g\|_\infty \big[ \mathds{P}^x (\tau_D>\delta) + (1-e^{-\lambda \delta})\big].
\end{align*}
Recall that $D$ \kb{is regular}, whence
$\limsup_{x\to x_0} \mathds{P}^x(\tau_D >\delta) =0$, see \cite[(9)]{chung86} or \cite[Proposition 1.19]{MR1329992}. So,
\[
\limsup_{x\to x_0} I_1(x) \leq \|g\|_\infty (1-e^{-\lambda \delta}).
\]
Since $\delta>0$ was arbitrary, $I_1(x) \to 0$ as $x\to x_0$.
We also have $
I_2(x) = \expect^x \big| g(X_{\tau_D}) - g(x_0)\big| \to 0$ as $x\to x_0$,
since $D$ is regular for the Dirichlet problem, see the discussion following \eqref{e.reg}.
The proof is complete.
\end{proof}

\begin{lem}\label{l.harmonic}
If $\lambda\ge 0$, $g\in \mk{B_b(D^c)}$ \mk{is continuous at $\partial D$}
and $h= \mk{H_D(\lambda)} g$, then  $h\in D(\cA^D)$ and $\cA^Dh = \lambda h$.
\end{lem}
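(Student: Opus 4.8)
The plan is to apply the characterization of the $C_b$-generator $\cA^D$ from Proposition~\ref{p.killedgenerator}, working with condition (iii): I must show that $h = P^\lambda_D g$ lies in $C_0(D)$, and that for every $x\in D$,
\[
\lambda h(x) = \lim_{\eps\to 0^+} \int_{\{|y-x|>\eps\}} \bigl[\tilde h(y) - \tilde h(x)\bigr]\nu(x,y)\dx y,
\]
where $\tilde h$ is the extension of $h$ by zero off $D$. The membership $h\in C_b(D)$ is already given by Lemma~\ref{l.philambda_boundary-g}; to upgrade to $h\in C_0(D)$ I would use Lemma~\ref{l.philambda_boundary-g} again together with the regular-harmonic representation: since $h = \expect^x[e^{-\lambda\tau_D}g(Y_{\tau_D})]$ and $g$ need not vanish at $\partial D$, $h$ need not extend by $0$ — but wait, this is a genuine point to be careful about. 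Actually $C_0(D)$ here means functions vanishing at the boundary of $D$, so I should instead argue differently: the cleanest route is to verify (i) of Proposition~\ref{p.killedgenerator} directly, i.e.\ show $\sup_{t\in(0,1)}\|t^{-1}(P^D_t h - h)\|_\infty < \infty$ and $\lim_{t\to 0} t^{-1}(P^D_t h(x) - h(x)) = \lambda h(x)$ pointwise; the implication (i)$\Rightarrow$(ii) then automatically gives $h\in C_0(D)$, so I need not check that separately.

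The key computation is the semigroup identity relating $P^D_t h$ to $h$. Using the strong Markov property of $Y$ at time $t$ and the representation $h(y) = \expect^y[e^{-\lambda\tau_D}g(Y_{\tau_D})]$, I would compute for $x\in D$:
\[
P^D_t h(x) = \expect^x\bigl[h(Y_t); t<\tau_D\bigr] = \expect^x\bigl[\expect^{Y_t}\bigl(e^{-\lambda\tau_D}g(Y_{\tau_D})\bigr); t<\tau_D\bigr] = \expect^x\bigl[e^{-\lambda(\tau_D - t)}g(Y_{\tau_D}); t<\tau_D\bigr],
\]
by the Markov property applied to the shifted process, since on $\{t<\tau_D\}$ the exit time of the shifted path is $\tau_D - t$. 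Hence
\[
P^D_t h(x) = e^{\lambda t}\,\expect^x\bigl[e^{-\lambda\tau_D}g(Y_{\tau_D}); \tau_D > t\bigr] = e^{\lambda t}\Bigl( h(x) - \expect^x\bigl[e^{-\lambda\tau_D}g(Y_{\tau_D}); \tau_D \le t\bigr]\Bigr).
\]
Then
\[
\frac{P^D_t h(x) - h(x)}{t} = \frac{e^{\lambda t}-1}{t}\,h(x) - \frac{e^{\lambda t}}{t}\,\expect^x\bigl[e^{-\lambda\tau_D}g(Y_{\tau_D}); \tau_D \le t\bigr].
\]
The first term tends to $\lambda h(x)$. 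For the second term I must show $t^{-1}\expect^x[e^{-\lambda\tau_D}g(Y_{\tau_D}); \tau_D\le t] \to 0$ as $t\to 0$, uniformly enough in $x$ to get the $\sup$-bound in (i). This reduces to controlling $t^{-1}\Pb^x(\tau_D\le t)$ near $t=0$: by the Ikeda--Watanabe formula \eqref{eq:IW}, $\Pb^x(\tau_D\le t) = \int_0^t\!\ds\int_D\!\dv\,p_s^D(x,v)\nu(v,D^c)$, and since $\nu(\cdot,D^c)$ is bounded on $D$ (as $D$ is bounded, away from... no — $\nu(v,D^c)$ blows up as $v\to\partial D$). This is the crux.

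The main obstacle is precisely the behavior of $t^{-1}\Pb^x(\tau_D\le t)$ as $t\to 0$, uniformly in $x\in D$, because $\Pb^x(\tau_D\le t)$ does not go to zero at rate $t$ uniformly up to the boundary — near $\partial D$ the process exits almost immediately. The resolution is to not seek uniformity on all of $D$ but to exploit that we only need (a) a finite $\sup$ over $t\in(0,1)$ of the $\|\cdot\|_\infty$-norm of $t^{-1}(P^D_t h - h)$, and (b) pointwise convergence for fixed $x\in D$. For (b), fixing $x\in D$, one has $\Pb^x(\tau_D\le t) = \Pb^x(\tau_D > 0) - \Pb^x(\tau_D > t)$... again this is just $\Pb^x(\tau_D\le t)$, which for fixed interior $x$ does tend to $0$ — but I need rate $o(t)$, which is false in general (the exit happens by a jump with intensity $\nu(x,D^c)>0$, so $\Pb^x(\tau_D\le t)\sim \nu(x,D^c)t$). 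So the second term does \emph{not} vanish! I have made an error: the correct statement must be that $t^{-1}\expect^x[e^{-\lambda\tau_D}g(Y_{\tau_D});\tau_D\le t]$ converges to $\int_{D^c}\nu(x,z)g(z)\dx z$, not to $0$ — giving $\cA^D h(x) = \lambda h(x) + \text{(boundary term)}$, contradicting the claim. The resolution: in condition (iii) the \emph{nonlocal operator uses $\tilde h$, the zero-extension}, and $P^D$ corresponds to the zero-extension too, so in fact $\cA^D h(x)$ as computed via $P^D$ already subtracts the mass that jumps out; the correct bookkeeping is that $P^D_t h(x) - h(x) = \expect^x[\tilde h(Y_t) - \tilde h(x)]$ up to lower-order terms localized near $\partial D$ — exactly as in the proof of Proposition~\ref{p.killedgenerator}, (i)$\Rightarrow$(iii), via the splitting over $\{Y_{\tau_D}\in B(0,2R)\}$ and its complement and Lemmas~\ref{l.pd} and \ref{l.stopestimate}. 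So the honest plan is: establish the semigroup identity $P^D_t h = e^{\lambda t} h - e^{\lambda t}\expect^x[e^{-\lambda\tau_D}g(Y_{\tau_D});\tau_D\le t]$ as above, then recognize the subtracted term, via Ikeda--Watanabe and the bounded-convergence/Vitali argument from Lemma~\ref{l.philambda_boundary-g}, as $t\cdot\bigl(\int_{D^c}\nu(x,z)g(z)\dx z + o(1)\bigr)$, and finally check that $\int_{D^c}\tilde h(z)\nu(x,z)\dx z = \int_{D^c}g(z)\nu(x,z)\dx z$ is exactly the "missing" part of the principal-value integral in \eqref{eq.killedgen3} so that everything telescopes to $\cA^D h(x)=\lambda h(x)$; the uniform $\sup$-bound (i) comes from Lemma~\ref{l.stopestimate} controlling the far-jump contribution and boundedness of $g$, $h$. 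The delicate point throughout is this boundary bookkeeping, and I would lean heavily on the already-proven Proposition~\ref{p.killedgenerator} to avoid redoing it from scratch — most likely verifying condition (i) and citing the proposition for the rest.
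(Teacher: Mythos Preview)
Your route via condition (i) of Proposition~\ref{p.killedgenerator} hits a real wall, and your proposed fix does not clear it. The identity
\[
\frac{P^D_t h(x) - h(x)}{t} \;=\; \frac{e^{\lambda t}-1}{t}\,h(x) \;-\; \frac{e^{\lambda t}}{t}\,\expect^x\bigl[e^{-\lambda\tau_D}g(Y_{\tau_D});\ \tau_D \le t\bigr]
\]
is correct, and you are right that the second term on the right does \emph{not} vanish as $t\to 0$; for fixed $x\in D$ it tends (via Ikeda--Watanabe) to $\int_{D^c}g(z)\,\nu(x,z)\dx z$. But your patch --- that this is ``exactly the `missing' part of the principal-value integral in \eqref{eq.killedgen3}'' --- rests on a slip: condition (iii) of Proposition~\ref{p.killedgenerator} is formulated with the \emph{zero} extension $\tilde h$, so $\int_{D^c}\tilde h(z)\,\nu(x,z)\dx z = 0$ and nothing telescopes. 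Worse, the uniform bound $\sup_{t\in(0,1)}\|t^{-1}(P^D_t h - h)\|_\infty < \infty$ required in (i) genuinely fails whenever $g$ does not vanish on $\partial D$: for $x$ close to $\partial D$ the second term above is of order $t^{-1}\Pb^x(\tau_D\le t)$, which is unbounded. You have in effect rediscovered the tension that, by Lemma~\ref{l.philambda_boundary-g}, $h$ has boundary values $g|_{\partial D}$, so the zero-extension machinery of Proposition~\ref{p.killedgenerator} is not the right entry point.

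The paper's argument is entirely different and avoids the zero extension. It extends $h$ to $\R^d$ by setting $h = g$ on $D^c$ --- continuous by Lemma~\ref{l.philambda_boundary-g} --- and works with the Dynkin operator $\mathscr{D}$ of the \emph{free} process on small balls $B(x,r)\subset D$. The strong Markov property for nested exit times gives the exact identity $P^\lambda_{B(x,r)}h = P^\lambda_{B(x,r)}P^\lambda_D g = P^\lambda_D g = h$ (this is precisely where the $g$-extension, not the zero extension, is needed), so the Dynkin quotient reduces to
\[
\frac{P^0_{B(x,r)}h(x) - h(x)}{\expect^x\tau_{B(x,r)}} \;=\; \frac{\expect^x\bigl[(1-e^{-\lambda\tau_{B(x,r)}})\,h(Y_{\tau_{B(x,r)}})\bigr]}{\expect^x\tau_{B(x,r)}},
\]
which by the scaling $\tau_{B(x,r)}\stackrel{d}{=}r^\alpha\tau_{B(0,1)}$ and dominated convergence tends to $\lambda h(x)$. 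Thus $\mathscr{D}h(x)=\lambda h(x)$ on $D$, and the paper closes by invoking Kwa\'snicki's lemmas to pass from the Dynkin operator to the principal-value integral and the semigroup limit. The key idea you are missing is that the correct extension of $h$ is by $g$, not by zero, and that the Dynkin operator on balls (rather than the killed semigroup $P^D_t$) is the tool that exploits it cleanly.
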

\begin{proof}
As in the proof of Proposition \ref{p.killedgenerator}, we make use of the Dynkin operator $\mathscr{D}$, defined by \eqref{eq.dynkinoperator}.
Fix $x\in D$ and let $r>0$ be so small that $B(x,r) \subset D$. To obtain a function defined on the whole of $\R^d$, we extend $h$ by setting
$h(x) = g(x)$ for $x\in D^c$. By Lemma \ref{l.philambda_boundary-g}, this yields a continuous function on \mk{$\overline{D}$}.
By the strong Markov property, 
\[
\mk{H_{B(x,r)}(\lambda)}h(x) = \mk{H_{B(x,r)}(\lambda)H_D(\lambda)} g(x) = \mk{H_D(\lambda)} g(x) = h(x).
\]
Thus,
\begin{equation}\label{eq.dynkin1}
\lim_{r\to 0} \frac{\mk{H_{B(x,r)(\lambda)}}h(x) - h(x)}{\expect^x\tau_{B(x,r)}} = 0.
\end{equation}
On the other hand, from the discussion in Subsection~\ref{sec:pot-theo-notions}, 
\begin{align}
&\frac{\mk{H_{B(x,r)}(0)}h(x) - \mk{H_{B(x,r)}(\lambda)} h(x)}{\expect^x\tau_{B(x,r)}}  = \frac{\expect^x \big[(1-e^{-\lambda \tau_{B(x,r)}})h(Y_{\tau_{B(x,r)}})\big]}{\expect^x \tau_{B(x,r)}} \notag\\
& = \frac{1}{\expect^0 \tau_{B(0,1)}} \expect^0 \big[r^{-\alpha}(1-e^{-r^\alpha\lambda \tau_{B(0,1)}})h(x+rY_{\tau_{B(0,1)}})\big]\notag\\
&\to \lambda h(x),\quad \mbox{ as } r\to 0,\label{eq.dynkin2}
\end{align}
because $h$ is continuous by Lemma~\ref{l.philambda_boundary-g} and bounded, so we can use the dominated convergence theorem.
Adding \eqref{eq.dynkin1} and \eqref{eq.dynkin2}, we get
$\mathscr{D}h(x) = \lambda h(x)$ and the claim follows from \cite[Lemma 3.3 and 3.4]{MR3613319}, including $h \in D(\cA^D)$.
\end{proof}

\section{The transition kernel with reflections}\label{sec:tk}

In this section we define the transition kernel $\mk{k(t)}$
aforementioned in the \kb{I}ntroduction.
To this end we 
\kb{define} the kernel $\varphi: (0,\infty)\times D \times \mathscr{B}(D)$ 
by
\begin{equation}\label{eq.phi}
\varphi(t,x,A) \coloneqq \int_D\dx v\int_{D^c}\dx z\,  
\mk{p_D(t,x, v)}\nu (v, z)\mu (z,A).
\end{equation}
Let us give an informal interpretation of $\varphi$. We consider $x\in D$ as the value of $Y_0=X_0$, i.e., the starting point of both the processes; $t$ as the value of $\tau_D$, the first exit time of $Y$ from $D$. By the Ikeda--Watanabe formula, $\int_D\dx v \, p_D(t,x,v)\nu(v,z)$ is the density function of 
$(\tau_D, Y_{\tau_D})$. 
If the process $X$ exists as described in the Introduction,
 then $\varphi(t,x,\dx w)\dx t$ is bound to be the joint distribution of $(\tau_D, X_{\tau_D})$. Of course, we mention $X$ and $Y$ only to develop intuition --- in this paper we merely construct a specific transition density $\mk{k(t)}$, using the analytic data: \mk{$p_D(t,x,y)$}, $\nu(x,y)$ and $\mu(z,\cdot)$, but we do not analyze the process $X$, for which see \cite{KB-MK-Mp}.

We note some simple properties of the kernel $\varphi$.
\begin{lem}\label{l.phi}
Let $\varphi$ be defined by \eqref{eq.phi}. Then:
\begin{enumerate}
[{\rm (a)}]
\item For every $x\in D$, we have 
\[ \int_0^\infty \dt \,  \varphi (t, x, D) = 1.\]
\item For every $x\in D$ and $t>0$, we have 
\[
p_D(t, x,D)+
\int_0^{t}\dx s \,\varphi (s, x, D) = 1.
\]
\end{enumerate}
\end{lem}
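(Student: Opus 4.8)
The plan is to derive both identities directly from the Ikeda--Watanabe formalism already assembled in Section~\ref{sec:prelim}, exploiting the crucial fact that $\mu(z,\cdot)$ is a \emph{probability} measure on $D$, so that $\mu(z,D)=1$ for every $z\in D^c$. Integrating the defining formula \eqref{eq.phi} over $A=D$ and using $\mu(z,D)=1$ collapses $\varphi(t,x,D)$ to the kernel
\[
\varphi(t,x,D)=\int_D\dx v\int_{D^c}\dx z\, p^D_t(x,v)\nu(v,z),
\]
which is exactly the integrand appearing in \eqref{eq:1}, \eqref{e.tfsp1} and \eqref{eq:a1}. So the whole lemma is just a restatement of those identities; no new analysis is needed, only an application of Tonelli's theorem to justify interchanging the $A=D$ integration with the $s$, $v$, $z$ integrals (all integrands are nonnegative, so this is automatic).

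For part~(a), after the collapse above I would write
\[
\int_0^\infty\dt\,\varphi(t,x,D)=\int_0^\infty\dx s\int_D\dx v\int_{D^c}\dx z\, p^D_s(x,v)\nu(v,z)=1,
\]
where the last equality is precisely \eqref{eq:1} (equivalently, this is $\mpr^x(\tau_D<\infty)=1$ read through the Ikeda--Watanabe formula \eqref{eq:IW} with $I=[0,\infty)$, $A=D$, $B=D^c$, together with \eqref{e.nhb}). For part~(b), the same collapse gives
\[
\int_0^t\dx s\,\varphi(s,x,D)=\int_0^t\dx s\int_D\dx v\int_{D^c}\dx z\, p^D_s(x,v)\nu(v,z),
\]
and since $p^D_t(x,D)=\int_D p^D_t(x,y)\dy$, identity~(b) is literally \eqref{eq:a1}. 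Alternatively one can cite \eqref{e.tfsp2} to identify $p^D_t(x,D)=\mpr^x(\tau_D>t)$ and \eqref{e.tfsp1} to identify the $s$-integral as $\mpr^x(\tau_D\le t)$, so that (b) reduces to the trivial partition $\mpr^x(\tau_D>t)+\mpr^x(\tau_D\le t)=1$.

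There is essentially no obstacle here; the only thing to be careful about is making sure the measurability of $z\mapsto\mu(z,D)$ and the constancy $\mu(z,D)\equiv 1$ are invoked explicitly (both are part of Hypothesis~\ref{hyp1}, which demands that each $\mu(z,\cdot)$ be a Borel probability measure on $D$), and that Tonelli is cited to reorder the integrals. One could mention that $\varphi$ is jointly measurable in $(t,x,A)$ as a composition of the jointly continuous $p^D_t(x,v)$, the kernel $\nu(v,z)$ and the kernel $\mu(z,\cdot)$, so that the iterated integrals make sense, but this is routine.
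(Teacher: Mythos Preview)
Your proposal is correct and matches the paper's own proof essentially verbatim: the paper simply says that (a) follows from Tonelli's theorem, Hypothesis~\ref{hyp1} (i.e., $\mu(z,D)=1$), and \eqref{eq:1}, and that (b) is proved similarly using \eqref{eq:a1} in place of \eqref{eq:1}. Your write-up is just a slightly more detailed unpacking of exactly these steps.
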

\begin{proof}
Part (a) follows from Tonelli's theorem, \mk{the fact that every $\mu(z, \cdot)$ is a probability measure}, and \eqref{eq:1}. The proof of  (b) is similar, using \eqref{eq:a1} instead of \eqref{eq:1}.
\end{proof}

We shall use the operator $S$, defined as follows.
For $f: (0,\infty)\times D \to [0,\infty]$,
\[
Sf(t,x) \coloneqq \int_0^t \dx s \int_D
\varphi(s, x, \dx w)f(t-s,w),\quad t>0, \quad x\in D.
\]
By Lemma~\ref{l.phi},
\begin{equation}\label{e.1ip}
0\le S\one(t,x)\le 1, \quad t>0,\quad x\in D.
\end{equation}

If $f: (0,\infty)\times D \times D \to [0,\infty]$, then we slightly abuse notation by also defining
\[
Sf(t,x,y) \coloneqq \int_0^t \dx s \int_D 
\varphi(s, x, \dx w)f(t-s,w,y),\quad t>0,\quad x,y\in D.
\]
We shall apply $S$ to $f(t, x,y)=\mk{p_D(t,x,y)}$.
Given the interpretation of $\varphi$, 
\[Sp_D(t,x,y)\coloneqq \int_0^t \dx s \int_D 
\varphi(s, x, \dx w)\mk{p_D(t-s,w,y)},\quad t>0,\quad x,y\in D,
\]
 tentatively introduces a single reflection from $D^c$ before time $t$.
To accommodate more reflections,
we iterate $S$ and define
\begin{equation}\label{e.dky}
\mk{k(t,x,y)}
\coloneqq \sum_{n=0}^\infty S^n\mk{p_D(t,x,y)}
\end{equation}
for $t>0$, $x,y\in D$. Here, as usual, $S^0\kb{\coloneqq I}$,
the identity.

\mk{Given a} set $A\subset D$ \mk{and a function $f: (0,\infty)\times D\times D \to [0,\infty)$} 
we let $f(t,x,A)=\int_A f(t,z,y)dy$. Then Tonelli's theorem gives
\[
Sf(t,x,A) = \int_0^t \dx s \int_D
\varphi(s, x, \dx w)f(t-s,w,A),\quad t>0,\quad x\in D.
\]
\mk{Applying this to \eqref{e.dky} and using Tonelli's theorem again, yields
\begin{equation}\label{e.dk}
k(t,x,A) \coloneqq \sum_{n=0}^\infty S^np_D(t,x,A).
\end{equation}
}

Being a series of positive functions, $k$ is \mk{well defined}, with values in $[0,\infty]$. 
We also have the following Duhamel (or perturbation)
formula:
\begin{equation}\label{e.pf}
\mk{k(t,x, A) = p_D(t, x, A)} + Sk(t,x, A),\quad t>0,\quad x\in D,\, A\in \mathscr{B}(D).
\end{equation}
We shall gradually prove that $k$ is a transition probability density.
We first establish the Chapman--Kolmogorov equations.
\begin{lem}\label{l.ck}
For every $t,s>0$, $x\in D$ and $A\in \mathscr{B}(D)$, we have
\[
\int_D \, \mk{k(t, x,\dx y) k(s,y,A) = k(t+s, x,A)}.
\]
\end{lem}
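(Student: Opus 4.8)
The plan is to prove the Chapman--Kolmogorov equation for $k$ by exploiting the series representation \eqref{e.dk} together with the semigroup property of the killed kernel $p^D$ (equation \eqref{e.CKpD}) and an associativity/convolution-type identity for the operator $S$. The central observation is that $S$ acts as a \emph{convolution in time coupled with composition in space}, and that it commutes in a suitable sense with the spatial integration against $k_s$. Concretely, I would introduce, for kernels $f,g$ on $(0,\infty)\times D\times \mathscr{B}(D)$, the time-space convolution
\[
(f\ast g)(t,x,A) \coloneqq \int_0^t \dx \sigma \int_D f(\sigma,x,\dx y)\, g(t-\sigma,y,A),
\]
and observe that $S f = \varphi \ast f$ (reading $\varphi(s,x,\cdot)$ as a kernel), that $\ast$ is associative, and that, by \eqref{e.CKpD}, $p^D$ is \emph{not} a convolution identity but does satisfy $\int_D p^D_t(x,\dx y)p^D_s(y,A) = p^D_{t+s}(x,A)$ pointwise in $t,s$ rather than under a time integral. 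So the proof cannot be a pure ``convolution algebra'' argument; the semigroup identity for $p^D$ enters separately.

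The key steps, in order, are as follows. First, I would fix $s>0$ and $A$, and compute $\int_D k_t(x,\dx y)k_s(y,A)$ by substituting the series \eqref{e.dk} for \emph{both} factors and using Tonelli to interchange the double sum and the spatial integral (everything is nonnegative, so this is legitimate). This reduces the claim to showing, for all $m,n\ge 0$,
\[
\int_D S^m p^D(t,x,\dx y)\, S^n p^D(s,y,A) \;=\; \text{(a sum over the first $k_{t+s}$ expansion)}.
\]
Second, I would prove the basic building-block identity: for each $n\ge 0$ and each $t,s>0$,
\begin{equation}\label{eq.ckplan}
\int_D p^D_t(x,\dx y)\, S^n p^D(s,y,A) \;=\; \sum_{j=0}^{n} \big[\text{terms}\big],
\end{equation}
and more usefully the ``clean'' version in which the $p^D$ semigroup property \eqref{e.CKpD} is applied to the innermost $p^D$ block: one shows $\int_D p^D_t(x,\dx y)\,S^n p^D(s,y,A)$ and $S^n p^D(t,x,\cdot)$ convolved with $p^D_s$ both expand, via associativity of $\ast$ and \eqref{e.CKpD}, into the same expression. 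The cleanest route is probably to prove by induction on $N$ that
\[
\sum_{m+n=N}\int_D S^m p^D(t,x,\dx y)\, S^n p^D(s,y,A) \;=\; S^N p^D(t+s,x,A),
\]
using in the inductive step the Duhamel formula \eqref{e.pf} (or directly the definition $S^{N}p^D = S(S^{N-1}p^D)=\varphi\ast S^{N-1}p^D$), splitting the time-convolution defining the left side at the point where the last reflection before time $t$ occurs versus after, and invoking \eqref{e.CKpD} for the single ``$p^D$ meets $p^D$'' junction that arises when $m=0$ on the left. Summing over $N$ then yields the assertion.

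The main obstacle I anticipate is the bookkeeping in that induction: one must carefully track the \emph{time} variable through nested convolutions, since the junction between $S^m p^D(t,\cdot)$ and $S^n p^D(s,\cdot)$ sits at absolute time $t$, whereas the Duhamel expansions of each factor split at times measured from their own origins. Getting the substitution of variables right (writing $\sigma$ for the last reflection time before $t$, then splitting into the case where it lies in $[0,t]$ versus contributing to the $p^D$-evolution that straddles $t$) is where an error would most likely creep in, and it is the step where the semigroup identity \eqref{e.CKpD} for $p^D$ is genuinely used rather than the formal convolution algebra. A secondary, purely technical point is justifying all the Tonelli interchanges; since $p^D$, $\nu$, $\mu$ and hence $\varphi$ and every $S^n p^D$ are nonnegative Borel kernels, this is routine, but it should be stated. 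I do not expect to need finiteness of $k$ at this stage — the identity holds in $[0,\infty]$ — although, combined with Lemma~\ref{l.phi} and \eqref{eq:a1}, one in fact knows $k_t(x,D)=1$, which retroactively makes all the manipulations involve finite quantities.
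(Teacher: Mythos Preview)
Your approach is correct and is the natural direct argument: the induction identity
\[
\sum_{m+n=N}\int_D S^m p^D(t,x,\dx y)\, S^n p^D(s,y,A) = S^N p^D(t+s,x,A)
\]
does hold, with the base case $N=0$ being \eqref{e.CKpD} and the inductive step reducing, after the time-splitting you describe, to the observation that
\[
\int_D p^D_t(x,\dx y)\,\varphi(\sigma,y,\cdot)=\varphi(t+\sigma,x,\cdot),
\]
which follows at once from the definition \eqref{eq.phi} of $\varphi$ and \eqref{e.CKpD}. Summing over $N$ and invoking Tonelli (everything nonnegative) gives the claim in $[0,\infty]$, exactly as you say.

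The paper does not spell out a proof here: it simply cites \cite[Lemma~2]{MR2457489} and \cite[Lemma~3]{MR3295773}, observing that the present situation fits into the abstract perturbation framework of those papers (with $p^D$ as the base transition kernel and $\nu\mu$ furnishing the perturbing jump kernel). Those references carry out, in a more general setting, precisely the perturbation-series-plus-base-Chapman--Kolmogorov argument you outline. So your approach is not \emph{different} so much as \emph{explicit}: you are redoing, by hand and in this concrete case, the computation that the paper outsources. What the citation buys is brevity; what your route buys is self-containment and the reader seeing where \eqref{e.CKpD} actually enters.

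One small correction to your closing remark: at this point in the paper neither $k_t(x,D)\le 1$ (Lemma~\ref{l.submarkovian}) nor $k_t(x,D)=1$ (Theorem~\ref{t.m}) has been proved yet---both come after Lemma~\ref{l.ck}. But as you yourself note, this is harmless, since the identity is first established in $[0,\infty]$ and no finiteness is needed.
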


\begin{proof} The equality may be obtained as in Bogdan, Hansen and Jakubowski \cite[Lemma~2]{MR2457489}. In fact, it is a special case of Bogdan and Sydor \cite[Lemma~3]{MR3295773}, with the transition kernel $k(s,x,t,A)$ there equal to $\mk{p_D(t-s,x,A)}$ and the perturbing kernel $J(u,z,\dx u_1 \dx z_1)$ given by $\kb{V}(\kb{z},\dx z_1)\one_{(u,\infty)}(u_1)\dx u_1$, where  $\dx u_1$ is the Lebesgue measure on $\R$ and $\kb{V}(x,A)\coloneqq \int_{D^c}\nu(x,\dz)\mu(z,A)$.
\end{proof}

We next prove that $k$ is sub-Markovian. 
Recall  that 
by Lemma \ref{l.phi}(b),
\begin{equation}\label{e.1}
1=\mk{p_D(t,x,D)}+S\one(t,x), \quad t>0,\, x\in D.
\end{equation}

\begin{lem}\label{l.submarkovian}
For all $t>0$ and $x\in D$ we have $\mk{k(t,x,D)} \leq 1$.
\end{lem}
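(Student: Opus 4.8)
The plan is to iterate the Duhamel formula \eqref{e.pf} in the form $k_t(x,D) = p^D_t(x,D) + Sk(t,x,D)$ and combine it with the partition-of-unity identity \eqref{e.1}. The key observation is that the claim $k_t(x,D)\le 1$ is equivalent, via $p^D_t(x,D) = 1 - S\one(t,x)$, to the inequality $Sk(t,x,D) \le S\one(t,x)$; and this in turn would follow if we knew $k_{t-s}(w,D)\le 1$ for $s\in(0,t)$, i.e., it is almost a tautology once one has the estimate for all \emph{smaller} times. So the natural device is induction on the number of reflection steps, exploiting that the partial sums of the series \eqref{e.dk} are built precisely from iterates of $S$.

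First I would set $k_t^{(N)}(x,D) := \sum_{n=0}^N S^n p^D(t,x,D)$ for $N\in\N_0$, so that $k_t(x,D) = \sup_N k_t^{(N)}(x,D)$ by monotone convergence (all terms are nonnegative). I claim $k_t^{(N)}(x,D)\le 1$ for every $N$, $t>0$, $x\in D$, by induction on $N$. For $N=0$ this is $p^D_t(x,D)\le 1$, immediate from \eqref{e.1} (or \eqref{e.tfsp2}). For the inductive step, note the recursion $k_t^{(N+1)}(x,D) = p^D_t(x,D) + S k^{(N)}(t,x,D)$, which follows by splitting off the $n=0$ term and using linearity of $S$ together with $S^{n+1} = S\circ S^n$. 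Applying the inductive hypothesis $k^{(N)}_{t-s}(w,D)\le 1$ inside the definition of $S$, and using that $\varphi(s,x,\cdot)$ is a nonnegative kernel, we get
\[
S k^{(N)}(t,x,D) = \int_0^t \dx s \int_D \varphi(s,x,\dx w)\, k^{(N)}_{t-s}(w,D) \le \int_0^t \dx s\, \varphi(s,x,D) = S\one(t,x).
\]
Hence $k_t^{(N+1)}(x,D) \le p^D_t(x,D) + S\one(t,x) = 1$ by \eqref{e.1}, completing the induction. Letting $N\to\infty$ gives $k_t(x,D) = \sup_N k_t^{(N)}(x,D) \le 1$.

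I do not anticipate a genuine obstacle here; the only point demanding a little care is the bookkeeping that $S^n p^D$ really is a measurable function of $(t,x)$ to which Tonelli applies at each stage — this is guaranteed since $\varphi$ is a Borel kernel (from \eqref{eq.phi} and Hypothesis~\ref{hyp1}) and $p^D$ is jointly measurable, so all the interchanges of sum and integral, and the identification of $S^n p^D$ with the $n$-fold integral, are legitimate. Everything reduces to the single structural fact recorded in \eqref{e.1}, namely that $p^D_t(x,D)$ and $S\one(t,x)$ sum to exactly one, which is why the bound is an \emph{equality} for $N=0$ and only deteriorates (if at all) by the defect $1 - k^{(N)}_{t-s}(w,D)$ picked up in the $S$-integral.
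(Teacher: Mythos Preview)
Your proof is correct and follows essentially the same approach as the paper: both argue by induction on the partial sums $\sum_{n=0}^N S^n p^D(t,x,D)$, using the recursion $\sum_{n=0}^{N+1} = p^D + S\big(\sum_{n=0}^N\big)$ together with the identity $p^D_t(x,D)+S\one(t,x)=1$ from Lemma~\ref{l.phi}(b), and then pass to the limit $N\to\infty$. The paper's version is slightly more terse but the logic is identical.
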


\begin{proof}
If $f : (0,\infty) \times D \to [0,1]$,
then by Lemma \ref{l.phi}(b),
\begin{align*}
0\le \mk{p_D(t,x,D)} + S f (t,x) \leq 
\mk{p_D(t,x,D)} + S \one(t,x)
= 1.
\end{align*}
Since $\mk{p_D(t,x,D)}\leq 1$, it follows by induction that
\[
\sum_{k=0}^{n+1} S^k \mk{p_D}(t,x,D) = \mk{p_D(t,x,D)} + S \big(\sum_{k=0}^n S^k \mk{p_D}\big)(t,x,D) \leq 1.
\]
By \eqref{e.dk} and letting $n\to \infty$, we verify the claim.
\end{proof}

By iterating \eqref{e.1} 
we obtain the identity
\begin{equation}\label{e.11}
1=\mk{p_D(t,x,D)}+S \mk{p_D}(t,x,D)+S^2 \one(t,x),\quad t>0, \ x\in D.
\end{equation}
Making use of the 
\kb{lower bound} in Hypothesis \ref{hyp1}, we can actually establish that $k$ is a Markovian kernel.

\begin{thm}\label{t.m}
Under Hypothesis~\ref{hyp1}, $\mk{k(t,x,D)}=1$ for all 
$t>0$, $x\in D$.
\end{thm}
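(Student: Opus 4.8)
The goal is to upgrade the sub-Markovian bound $k_t(x,D)\le 1$ from Lemma~\ref{l.submarkovian} to the equality $k_t(x,D)=1$. The natural strategy is to show that $S^n\one(t,x)\to 0$ as $n\to\infty$, for then iterating \eqref{e.1} (equivalently, telescoping the identity $1=p^D_t(x,D)+Sp^D(t,x,D)+\cdots+S^{n-1}p^D(t,x,D)+S^n\one(t,x)$, which generalizes \eqref{e.11}) and passing to the limit gives $1=\sum_{n=0}^\infty S^np^D(t,x,D)=k_t(x,D)$ by \eqref{e.dk}. So everything reduces to proving $\lim_{n\to\infty}S^n\one(t,x)=0$.

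First I would unwind what $S^n\one(t,x)$ means: using the measure $\varphi(s,x,\dx w)\,\dx s$ on $(0,\infty)\times D$ and Lemma~\ref{l.phi}(a), which says $\int_0^\infty\varphi(s,x,D)\,\dx s=1$, the quantity $S^n\one(t,x)$ is exactly the probability that a certain renewal-type chain — with i.i.d.-like increments governed by $\varphi$, starting at $x$ — has its $n$-th ``reflection time'' at most $t$. More precisely, if $(\sigma_j,W_j)$ is the Markov chain on $(0,\infty)\times D$ with $(\sigma_1,W_1)\sim \varphi(\cdot,x,\cdot)$ and, given $W_{j}$, $(\sigma_{j+1},W_{j+1})\sim\varphi(\cdot,W_j,\cdot)$, then $S^n\one(t,x)=\Pb(\sigma_1+\cdots+\sigma_n\le t)$. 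Thus $S^n\one(t,x)\to 0$ is equivalent to showing $\sigma_1+\cdots+\sigma_n\to\infty$ a.s., i.e., that infinitely many reflections cannot happen in finite time. This is where the tightness hypothesis in Hypothesis~\ref{hyp1} must enter: if the reflection measures $\mu(z,\cdot)$ were allowed to concentrate mass near $\partial D$, the return points $W_j$ could approach the boundary, the exit times $\sigma_{j+1}$ would shrink, and the sum could stay bounded.

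The key quantitative step is a uniform lower bound on exit times from $D$ starting from a compact set: by Lemma~\ref{l.uniformstopping}, for $K\Subset D$ and any $T>0$ there is $\eta=\eta(K,T)>0$ with $\Pb^x(\tau_D>t)\ge\eta$ for all $x\in K$, $0<t\le T$. Combined with \eqref{e.tfsp2}, this says $p^D_t(x,D)\ge\eta$, hence $\int_0^T\varphi(s,x,D)\,\dx s=1-p^D_T(x,D)\le 1-\eta$ — wait, more usefully: $\int_T^\infty\varphi(s,x,D)\,\dx s = \Pb^x(\tau_D>T)\ge\eta$ for $x\in K$. Now fix $t>0$ and $\eps>0$; pick $H\Subset D$ with $\mu(z,H)\ge 1-\eps$ for all $z\in D^c$ (tightness). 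Enlarging slightly to a compact $K\supset H$ with $H\Subset K\Subset D$, the chain's return points $W_j$ lie in $H\Subset K$ with probability at least $1-\eps$ at each step. On the event that $W_j\in K$, the next increment satisfies $\Pb(\sigma_{j+1}>t\mid W_j)\ge\eta(K,t)$. A Borel–Cantelli / conditional-Chernoff argument then forces, with probability one, infinitely many of the $\sigma_j$ to exceed $t$, so $\sum_j\sigma_j=\infty$; to make the ``$\eps$ of bad steps'' harmless one sends $\eps\to 0$ after first observing that the probability of $n$ consecutive steps all landing in $H$ and a fixed positive fraction of them having $\sigma_j>t$ is bounded below uniformly. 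I would organize this as: $S^n\one(t,x)=\Pb(\sigma_1+\cdots+\sigma_n\le t)\le \Pb(\#\{j\le n:\sigma_j>t\}< 1) + \Pb(\exists j\le n: W_j\notin H)$, and iterate the bound $S(f\cdot\one_{\{s\le t\}})(t,x)\le$ (something $<1$) along steps that stay in $H$.

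\textbf{Main obstacle.} The delicate point is handling the ``escape'' steps where $W_j$ lands outside the tight core $H$: there the uniform lower bound on $\sigma_{j+1}$ from Lemma~\ref{l.uniformstopping} fails, since $W_j$ may be arbitrarily close to $\partial D$. One cannot simply ignore these steps because, a priori, once the chain escapes $H$ it might never return. The fix is that escape has probability at most $\eps$ at every single step \emph{uniformly in the current position}, because $\mu(z,H)\ge 1-\eps$ for \emph{all} $z\in D^c$ regardless of where the exit point $z$ is — this is exactly the strength of the uniform tightness in Hypothesis~\ref{hyp1}. So the number of escape steps among the first $n$ is stochastically dominated by a Binomial$(n,\eps)$, and on the complementary (probability $\ge(1-\eps)^n$... better: one-step-at-a-time) event the chain keeps re-entering $H$. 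Making the bookkeeping clean — decoupling ``stays in $H$'' from ``$\sigma_j$ large'' and iterating the operator inequality $S^n\one(t,x)\le (1-\eta)^{\lfloor n/M\rfloor} + (\text{small tightness correction})$ for a suitable block length $M$ — is the real content; the rest is the routine telescoping of \eqref{e.1}. I would expect the cleanest writeup to avoid the explicit Markov chain and instead directly estimate the iterated integral $S^n\one$ by splitting each application of $S$ according to whether the inner variable $w$ (a $\mu$-distributed return point) lies in $H$ or not, bounding the ``in $H$'' contribution using $\sup_{x\in K}\int_0^t\varphi(s,x,D)\,\dx s = \sup_{x\in K}(1-p^D_t(x,D))\le 1-\eta<1$, which is where the factor of geometric decay comes from.
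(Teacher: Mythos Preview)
Your plan is correct and is essentially the paper's argument: the decisive observation is the uniform two-step contraction---after one reflection the landing point lies in the tight core $H$ with probability $\ge 1-\eps$ \emph{regardless of the starting point}, and from $w\in H$ one has $S\one(s,w)=1-p^D_s(w,D)\le 1-\eta$ by Lemma~\ref{l.uniformstopping}, giving $S^2\one\le 1-\eta(1-\eps)<1$ uniformly on $(0,T]\times D$ (the paper takes $\eps=\tfrac12$ and phrases this dually as $p^D_t(x,D)+Sp^D(t,x,D)\ge\eta/2$, then finishes with a one-line infimum trick on $\ell\coloneqq\inf_{x\in D,\,t\le T}k_t(x,D)$ rather than your iteration, but via \eqref{e.11} these are the same inequality). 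Your renewal-chain, Borel--Cantelli and block-length discussion is unnecessary scaffolding---the clean operator bound in your final sentence is the whole proof, so lead with it.
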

\begin{proof}
In view of Lemma \ref{l.submarkovian}, it suffices to prove that $\mk{k(t,x,D)} \geq 1$ for all $t>0$ and $x\in D$. We fix an arbitrary $T\in (0,\infty)$ and proceed in two steps.\smallskip

\emph{Step 1:} We prove a positive lower bound for $\mk{p_D(t,x,D)} + S \mk{p_D}(t,x,D)$ uniform for $t\in (0,T)$ and $x\in D$. \mk{Let
$H$ and $\vartheta$ be as in Hypothesis \ref{hyp1}(ii)}.
By Lemma \ref{l.uniformstopping} and \eqref{e.tfsp2}, we find $\eta>0$ such that $\mk{p_D(s,w,D)} \geq \eta$ for
$s\in (0,T)$ and $w\in H$.
Then for $t\in (0,T)$ and $x\in D$, by \eqref{eq:a1} \kb{and \eqref{e.fsp}} we get
\begin{align}
S \mk{p_D}(t,x,D) &
 = \int_0^t\!\dx s \int_D\! \dx v \int_{D^c}\! \dz \int_D \, \mk{p_D(s,x,v)} \nu(v,z) \mu(z, \dx w) \mk{p_D(t-s,w,D)}\notag\\
& \ge \eta \int_0^t\dx s \int_D \dx v \,  \int_{D^c} \dz \, \mk{p_D(s, x,v)} \nu(v,z) \mu(z,\mk{H})\label{eq.Kest}\\
& \ge \mk{\eta\vartheta} \int_0^t\dx s \int_D \dx v \,  \int_{D^c} \dz\, \mk{p_D(s,x,v)} \nu(v,z)\notag \\
&= \mk{\eta\vartheta} \Pb^x ( \tau_D < t)\notag\\
&= \mk{\eta\vartheta}  \int_0^t\dx s \,  \int_D \dy \int_{D^c} \dz\int_D \,  \mk{p_D(s, x,y)} \nu(y,z) \mu(z,\dx w)\notag\\
& \ge  \mk{\eta\vartheta} \int_0^t\!\dx s  \int_D\! \dy \int_{D^c}\! \dz \int_D\,  
\mk{p_D(s, x,y)} \nu(y,z) \mu(z,\dx w)\mk{p_D(t-s, w,D)}\notag\\
&= \mk{\eta\vartheta} \mk{\kb{S} p_D}(t,x,D)\notag.
\end{align}
We conclude that all the above integrals are comparable. This will be quite useful later on, but for now we only deduce that
for $t\in (0,T)$ and $x\in D$,
\begin{align}\label{e.lbpSp}
\mk{p_D(t,x,D)} + S \mk{p_D}(t,x,D)
& \ge 
\Pb^x ( \tau_D > t)+ 
\mk{\eta\vartheta}\Pb^x ( \tau_D < t) \ge
\mk{\eta\vartheta}.
\end{align}
\smallskip

\emph{Step 2:} We prove that $\mk{k(t,x,D)}=1$ for $t>0$ and $x\in D$.
Indeed, let
\[
\ell = \inf\{ \mk{k(t,x,D)} : x\in D, t\leq T\}.
\]

Clearly $0\leq \ell \leq 1$. Iterating \eqref{e.pf}, for  $t\in (0,T)$ and $x\in D$ we obtain
\begin{align*}
\mk{k(t,x,D)}&= \mk{p_D(t,x,D)}+S \mk{p_D}(t,x,D)+S^2k(t,x,D)\\
& \geq \mk{p_D(t,x,D)}+S \mk{p_D}(t,x,D)+\ell S^2\one(t,x)\\
&= \ell\big[\mk{p_D}(t,x,D)+S \mk{p_D}(t,x,D)+S^2\one(t,x)\big]\\
&\quad +(1-\ell)\big[\mk{p_D}(t,x,D)+S \mk{p_D}(t,x,D)\big].
\end{align*}
By \eqref{e.11} and Step 1, 
$\ell\ge \ell + (1-\ell)\mk{\eta\vartheta}$, hence $\ell=1$, which ends the proof.
\end{proof}

\begin{cor}\label{c.ucSn}
We have $S^n\one(t,x) \le (1-\mk{\eta\vartheta})^{\lfloor n/2 \rfloor}$ for $n\in \N_0$, $t>0$, $x\in D$.
\end{cor}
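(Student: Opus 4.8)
The plan is to bootstrap from two facts already in hand --- the identity \eqref{e.11} and the lower bound \eqref{e.lbpSp} from the proof of Theorem~\ref{t.m} --- and to exploit that $S$ is a positive operator whose defining integral evaluates its argument only at \emph{earlier} times. Here $T\in(0,\infty)$ and $\eta=\eta(T)>0$ are as in the proof of Theorem~\ref{t.m}, so the target inequality is to be read for $t\in(0,T)$ (recall that \eqref{e.lbpSp} was established only for such $t$). The cases $n=0$ and $n=1$ are trivial, since then $(1-\tfrac{\eta}{2})^{\lfloor n/2\rfloor}=1$, while $S^0\one=\one\le1$ and, by \eqref{e.1ip}, $S\one(t,x)\le1$. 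For $n=2$, combining \eqref{e.11}, $1=p^D_t(x,D)+Sp^D(t,x,D)+S^2\one(t,x)$, with \eqref{e.lbpSp}, $p^D_t(x,D)+Sp^D(t,x,D)\ge\tfrac{\eta}{2}$, gives the base estimate $S^2\one(t,x)\le1-\tfrac{\eta}{2}$ for $t\in(0,T)$, $x\in D$.

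The core step is the inequality
\[
S^{n+2}\one(t,x)\le\Big(1-\frac{\eta}{2}\Big)S^{n}\one(t,x),\qquad n\in\N_0,\ t\in(0,T),\ x\in D.
\]
Since $S^{n+2}\one=S^{n}(S^2\one)$ and $S$ is monotone, this follows from the base estimate together with the elementary observation that if $f\colon(0,\infty)\times D\to[0,\infty]$ satisfies $f(t',w)\le c$ for all $t'\in(0,T)$ and $w\in D$, then $S^{n}f(t,x)\le c\,S^{n}\one(t,x)$ for all $t\in(0,T)$ and $x\in D$; this I would prove by a short induction on $n$, the only point being that in $(Sg)(t,x)=\int_0^t\dx s\int_D\varphi(s,x,\dx w)\,g(t-s,w)$ the values of $g$ that enter are taken at times $t-s\in(0,t)\subset(0,T)$. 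Applying this with $c=1-\tfrac{\eta}{2}$ and $f=S^2\one$ proves the displayed inequality, and iterating it yields $S^{2k}\one(t,x)\le(1-\tfrac{\eta}{2})^{k}$ as well as $S^{2k+1}\one(t,x)\le(1-\tfrac{\eta}{2})^{k}S\one(t,x)\le(1-\tfrac{\eta}{2})^{k}$ for $k\in\N_0$; since $\lfloor n/2\rfloor=k$ in both parities, this is the claim.

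I do not expect a genuine obstacle: the statement merely reorganises estimates already established. The one delicate point is the bounded time horizon --- the base estimate $S^2\one(t,x)\le1-\tfrac{\eta}{2}$ holds only for $t\in(0,T)$ and generally fails for large $t$ --- so one must verify that every application of $S$ in the induction keeps the time argument inside $(0,T)$, which it does precisely because each factor of $S$ strictly decreases it.
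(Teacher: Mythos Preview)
Your proof is correct and follows essentially the same route as the paper's: combine \eqref{e.11} with \eqref{e.lbpSp} to obtain $S^2\one\le(1-\tfrac{\eta}{2})\one$, then iterate using the positivity of $S$ and \eqref{e.1ip}. Your explicit attention to the time horizon (that \eqref{e.lbpSp} is only available for $t\in(0,T)$ and that each application of $S$ keeps the time argument inside $(0,T)$) is a welcome clarification that the paper leaves implicit.
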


\begin{proof}
From \eqref{e.11} and \eqref{e.lbpSp} we get $S^2\one\le (1-\mk{\eta\vartheta})\one$. Therefore,
$S^{2n}\one\le (1-\mk{\eta\vartheta})^n\one$, $n\in \N$. The statement follows from this and \eqref{e.1ip}.
\end{proof}
Theorem \ref{t.m} and Equation \eqref{e.dk} yield
\begin{equation}\label{e.18}
1=\mk{p_D(t, x,D})+S \mk{p_D}(t,x,D)+S^2 \mk{p_D}(t,x,D)+\ldots,\quad t>0, \ x\in D.
\end{equation}
Corollary~\ref{c.ucSn} shows that the series in \eqref{e.18} converges
exponentially.

\kb{We will focus on 
the \textit{semigroup}} $\mk{K=(K(t), t > 0})$ associated to the transition kernels $(\mk{k(t)}, t > 0)$. More precisely,
given $f\in B_b(D)$ we put 
\begin{equation}
\label{e.transition}
\mk{K(t)}f(x) \coloneqq \int_D f(y)\mk{k(t, x,\dx y)}, \quad t\kb{>} 0,\; x\in D.
\end{equation}
As a consequence of Theorem \ref{t.m}, $\mk{K(t)}$ is a Markovian operator on $B_b(D)$. 
It follows from Lemma \ref{l.ck} that the family $(\mk{K(t), t>0)}$ \kb{indeed} satisfies the semigroup law $\mk{K(t+s)=K(t)K(s)}$ for $\kb{s,t>0}$.

\mk{
\begin{rem}\label{r.rh}
    Many results of this section do not need the full strength of Hypothesis \ref{hyp1}. Namely, Lemma \ref{l.phi}, \ref{l.ck} and \ref{l.submarkovian} \kb{do not} require parts (ii) and (iii) of Hypothesis
    \ref{hyp1} and Theorem \ref{t.m} and Corollary \ref{c.ucSn}
    do not use part (iii).
    \kb{The Lipschitz condition on $D$ can be replaced throughout the paper by the regularity \eqref{e.reg} and ``not hitting the boundary" \eqref{e.nhtb}; this follows from our proofs. For instance, an open set $D\subset\Rd$ with the complement $D^c$ satisfying the so-called volume density condition \cite{BOGDAN2019} is regular by the arguments following \eqref{e.reg}.
If its boundary has zero Lebesgue measure, then \eqref{e.nhtb} holds too; see 
\cite[Corollary A.2]{BOGDAN2019}.
    The boundedness of $D$ may be dropped in Lemma \ref{l.phi}, \ref{l.ck}, \ref{l.submarkovian}, Theorem \ref{t.m}, and Corollary \ref{c.ucSn}, but 
it is
crucial, e.g., in Section~\ref{s.im}; see Remark~\ref{r.iob}.} 
\end{rem}
}

\kb{
\begin{example}\label{ex.dirac}
An important special case is when $\mu(z,A)$ does not depend on~$z$.
For instance, let $\mu(z,\cdot)=\delta_{x_0}$, the Dirac measure at a (fixed) point $x_0\in D$. 
By \eqref{eq.phi},
$\varphi(t,x,\dx w) = \left(P_D(t)\kappa_D\right)(x)\delta_{x_0}(\dx w)$,
where $\kappa_D(v)\coloneqq\nu(v,D^c)$ for $v\in D$. So, for $f\ge 0$, $t>0$, and $x,y\in D$,
\[
Sf(t,x,y) = \int_0^t \dx s P_D(s)\kappa_D(x)f(t-s,x_0,y)
=\left(P_D(\cdot)\kappa_D(x)*f(\cdot,x_0,y)\right)(t),
\]
where $*$ denotes the convolution on $\R$.
In particular, for $t>0$, $x,y\in D$,
\begin{equation}\label{e.SpD}
Sp_D(t,x,y)= \int_0^t \dx s \int_D \dx v \, p_D(s,x,v)\kappa_D(v)p_D(t-s,x_0,y),
\end{equation}
but it also follows that 
\begin{equation}\label{e.dkx0}
k(\cdot,x,y)=p_D(\cdot,x_0,y)*\sum_{n=0}^\infty \left(P_D(\cdot)\kappa_D(x)\right)^{*n},\quad \quad x,y\in D.
\end{equation}
\end{example}
\begin{rem}\label{r.tough}
The kernel $k$ from Example~\ref{ex.dirac} is quite singular.
Indeed, we have
$k(t,x,y)\ge Sp_D(t,x,y)$ and 
the inner (space) integral in \eqref{e.SpD} is
\begin{align*}
\int_D \dx v \, p_D(s,x,v)\kappa_D(v)
&\ge c\int_D \dx v \,p_D(s,x,v)
=c\Pb^x(\tau_D>s)\ge c \Pb^x(\tau_D>s_0),
\end{align*}
where $c\coloneqq \inf\{\nu(v,D^c): v\in D\}>0$, $s\le s_0<\infty$, and we used \eqref{e.tfsp2}.
It~follows that for $t\le s_0$ and $x$ in any given compact subset of $D$,
\begin{align*}
k(t,x,y)\ge C \int_0^t \dx s\, p_D(t-s,x_0,y)=C \int_0^t \dx s\, p_D(s,x_0,y),
\end{align*} 
\ml{where $C>0$ by Lemma \ref{l.uniformstopping}.}
Then, if $y$ is close to $x_0$,  $p_D(s,x_0,y)\approx p(s,x_0,y)$, so by \cite[p. 249]{MR2457489},
$$k(t,x,y)
\ge C'|y-x_0|^{\alpha-d}\wedge \left(t^2|y-x_0|^{-\alpha-d}\right).$$
Now, if $2\alpha<d$ and $f(y)\approx |y-x_0|^{-\alpha}$ on $D$, then we have $f\in L^2(D)$, but $\int_D k(t,x,y)f(y)\dx y\equiv \infty$. So the semigroup $(K(t))$ does not even act on $L^2(D)$, in particular it is not associated to a Dirichlet form. Moreover, the kernels $k(t,x,y)$ are not symmetric, otherwise symmetry, \ml{the} equality $K(t)\one_D=\one_D$, and Schur's test would make $K(t)$ a contraction on $L^2(D)$ for $t>0$, a contradiction. We will also see in Remark \ref{r.notfeller} that $(K(t))$ is not a Feller semigroup. This motivates our approach by resolvent kernels in the next section.
\end{rem}
}

\section{The Laplace transform of the semigroup}\label{s.Lt}

We now study the Laplace transform $\mk{R(\lambda)}$ of 
\kb{$(K(t))$}, defined by
\[
(\mk{R(\lambda)} f)(x)\coloneqq \mk{\int_0^\infty \dx t}\int_D e^{-\lambda t}\mk{k(t, x, \dx y)} f(y),\quad x\in D, \; \kb{\lambda>0,}
\]
and relate it to the Laplace transform $\mk{R_D(\lambda)}$ of $\kb{(P_D(t))}$.
To this end we introduce  the operator $\mk{\Phi(\lambda)}$, 
\[
(\mk{\Phi(\lambda)} f)(x) \coloneqq \int_0^\infty \dt  \int_D\, e^{-\lambda t} \varphi(t, x, \dx y) f(y),\quad \kb{x\in D,\; \lambda\ge 0,}
\]
where $f \in B_b(D)$.
\ml{This}   operator is closely related to the 
operator $\mk{H_D(\lambda)}$ \kb{from}
Section \ref{sect.dirichlet}. Indeed, 
\kb{since} $\mu$ is a kernel,  for $f\in B_b(D)$ we may define
\[
(\mu f)(z)\coloneqq \mu (z, f) \coloneqq \int_D \mu (z, \dx y) f(y), \quad z\in D^c.
\]
With this notation, we have $\mk{\Phi(\lambda)} f = 
\mk{H_D(\lambda)[\mu f]}$. 
From Lemma \ref{l.philambda_boundary-g} we now obtain the following result about continuity of $\mk{\Phi(\lambda)} f$. In the formulation of the result, we say that function $f\in C_b(D)$ \emph{belongs to $C(\overline{D})$} (and write $f\in C(\overline{D})$) if it has a (necessarily unique) continuous extension to $\overline{D}$. We then identify $f$ and its extension to $\overline{D}$.

\begin{lem}\label{l.philambda_boundary}
The operator $\mk{\Phi(\lambda)}$ has the strong Feller property on $D$.
If $f \in C_b(D)$ then $\mk{\Phi(\lambda)} f\in C(\overline D)$ and $\mk{\Phi(\lambda)} f=\mu f$ on $\partial D$.
\end{lem}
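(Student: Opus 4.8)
The plan is to deduce the statement from Lemma~\ref{l.philambda_boundary-g} via the identity $\Phi_\lambda f=P^\lambda_D(\mu f)$ recorded just above it. Thus the only thing to check is that $\mu f$, viewed as a function on $D^c$, has the regularity required to feed into Lemma~\ref{l.philambda_boundary-g}: bounded and measurable when $f\in B_b(D)$, and in addition continuous at $\partial D$ when $f\in C_b(D)$.

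First I would prove the strong Feller property. Fix $f\in B_b(D)$. Since each $\mu(z,\cdot)$ is a probability measure on $D$ we have $|\mu f(z)|\le\|f\|_\infty$, so $\mu f$ is bounded; and because $\mu$ is a Borel kernel, $z\mapsto\mu f(z)$ is Borel measurable, first for $f=\one_A$ with $A\in\mathscr{B}(D)$ and then for general $f\in B_b(D)$ by a monotone class argument. Hence $\mu f\in B_b(D^c)$, and the first part of Lemma~\ref{l.philambda_boundary-g} gives $\Phi_\lambda f=P^\lambda_D(\mu f)\in C_b(D)$. As $f\in B_b(D)$ was arbitrary, $\Phi_\lambda$ is strong Feller on $D$.

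Next I would treat $f\in C_b(D)$. Here Hypothesis~\ref{hyp1} is used in full: the weak continuity of $z\mapsto\mu(z,\cdot)$ together with the uniform tightness of $\{\mu(z,\cdot):z\in D^c\}$ yields that $z\mapsto\mu f(z)$ is continuous on all of $D^c$ --- the tightness is what prevents escape of mass toward $\partial D$ and so legitimizes testing against the bounded, but not necessarily compactly supported, continuous function $f$ on the noncompact set $D$. Since $\mu f$ is also bounded, $\mu f\in C_b(D^c)$, and in particular $\mu f$ is continuous at every point of $\partial D\subset D^c$. The second part of Lemma~\ref{l.philambda_boundary-g} then shows that $P^\lambda_D(\mu f)$ extends continuously to $\overline D$ with boundary values $\mu f$ on $\partial D$, i.e.\ $\Phi_\lambda f\in C(\overline D)$ and $\Phi_\lambda f=\mu f$ on $\partial D$. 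The one point needing care is precisely this continuity of $z\mapsto\mu f(z)$ for $f\in C_b(D)$, where the noncompactness of $D$ forces one to combine weak continuity with the tightness of Hypothesis~\ref{hyp1}; the remaining steps are direct invocations of Lemma~\ref{l.philambda_boundary-g}.
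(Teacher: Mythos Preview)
Your proof is correct and follows essentially the same route as the paper: reduce to Lemma~\ref{l.philambda_boundary-g} via $\Phi_\lambda f = P^\lambda_D(\mu f)$, checking that $\mu f\in B_b(D^c)$ for $f\in B_b(D)$ and $\mu f\in C_b(D^c)$ for $f\in C_b(D)$. One minor remark: the tightness you invoke in the second part is in fact superfluous, since weak continuity of the probability measures $z\mapsto\mu(z,\cdot)$ by definition means continuity of $z\mapsto\int_D f\,d\mu(z,\cdot)$ for every $f\in C_b(D)$; the paper accordingly appeals only to weak continuity there.
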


\begin{proof}
For $f\in B_b(D)$, $\mu f\in B_b(D^c)$ and Lemma~\ref{l.philambda_boundary-g} yields $\mk{\Phi(\lambda)} f\in C_b(D)$, which is the strong Feller property.
If $f\in C_b(D)$ then \mk{by Hypothesis \ref{hyp1}(iii)\kl{,} the function
$\mu f$ is continuous at $\partial D$ and} 
Lemma \ref{l.philambda_boundary-g} \mk{yields that} $\mk{\Phi(\lambda)} f\in C(\overline D)$ and
$\mk{\Phi(\lambda)} f=\mu f$ at $\partial D$.
\end{proof}

We note that for $f \in B_b(D)$,
\begin{equation}\label{e.fPhlp}
\mk{\Phi(\lambda)} f(x)=
\expect^x \big[ e^{-\lambda \tau_D}\mu f(Y_{\tau_D}) \big], \quad x\in \overline{D}.
\end{equation}

\begin{lem}\label{l.i}
If $f\in B_b(D)$, $\lambda>0$ and $\mk{\Phi(\lambda)} 
f=f$, then $f=0$.
\end{lem}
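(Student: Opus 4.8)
The plan is to exploit the probabilistic representation \eqref{e.fPhlp}, namely $\Phi_\lambda f(x) = \expect^x[e^{-\lambda \tau_D}\mu f(Y_{\tau_D})]$ for $x \in \overline D$, together with the uniform contraction factor $e^{-\lambda \tau_D}$ and the tightness in Hypothesis~\ref{hyp1}. Suppose $f \in B_b(D)$, $\lambda > 0$, and $\Phi_\lambda f = f$. First I would note that by Lemma~\ref{l.philambda_boundary} we may as well assume $f \in C(\overline D)$: indeed, from $\Phi_\lambda f = f$ on $D$ and the strong Feller property $\Phi_\lambda f \in C_b(D)$ we get $f \in C_b(D)$, and then $\mu f \in C_b(D^c)$ by weak continuity of $z \mapsto \mu(z,\cdot)$, so $\Phi_\lambda f \in C(\overline D)$, hence $f$ extends continuously to $\overline D$.

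The core step is an $L^\infty$ estimate. Since $\|f\|_\infty < \infty$ is attained (or approached) on the compact set $\overline D$, pick $x_0 \in \overline D$ with $|f(x_0)|$ close to $\|f\|_\infty$. From the representation,
\[
|f(x_0)| = \big|\expect^{x_0}\big[e^{-\lambda \tau_D}\mu f(Y_{\tau_D})\big]\big| \le \|f\|_\infty \, \expect^{x_0}\big[e^{-\lambda \tau_D}\big],
\]
using $|\mu f(z)| = |\int_D \mu(z,\dx y)f(y)| \le \|f\|_\infty$ since $\mu(z,\cdot)$ is a probability measure. So $\|f\|_\infty \le \|f\|_\infty \sup_{x \in \overline D}\expect^x[e^{-\lambda \tau_D}]$, and it suffices to show $\sup_{x \in \overline D}\expect^x[e^{-\lambda\tau_D}] < 1$. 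This supremum is strictly less than $1$ because $\expect^x[e^{-\lambda \tau_D}] = 1$ would force $\tau_D = 0$ a.s.\ under $\Pb^x$, which holds only for $x \in \partial D$ by \eqref{e.reg}; for $x \in \partial D$ we indeed have $\expect^x[e^{-\lambda \tau_D}] = 1$, so the bare estimate is not quite enough and I need to be more careful near the boundary.

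To close the gap I would iterate. Applying $\Phi_\lambda$ twice and using the strong Markov property (as in Lemma~\ref{l.harmonic}'s use of $P^\lambda_{B(x,r)}P^\lambda_D = P^\lambda_D$), one gets $f = \Phi_\lambda^2 f$ with kernel involving $\expect^x[e^{-\lambda(\tau_D + \tau_D \circ \theta_{\tau_D})}\,\cdots]$; after one reflection the process restarts at a point drawn from $\mu(z,\cdot)$, and by tightness that point lies in a fixed compact $H \Subset D$ with probability $\ge 1/2$ (Hypothesis~\ref{hyp1}), uniformly in $z \in D^c$. On $H$, by Lemma~\ref{l.uniformstopping} the second exit time has $\expect^w[e^{-\lambda \tau_D}] \le 1 - \eta(\lambda)$ for some $\eta(\lambda) > 0$ uniform over $w \in H$ (since $\Pb^w(\tau_D > t) \ge \eta$ for $t \le T$ yields $\expect^w[e^{-\lambda\tau_D}] \le 1 - \eta(1 - e^{-\lambda T})$, say). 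Combining, $|f(x)| = |\Phi_\lambda^2 f(x)| \le \|f\|_\infty (1 - \tfrac{1}{2}\eta(\lambda))$ for all $x \in \overline D$, whence $\|f\|_\infty \le (1 - \tfrac12 \eta(\lambda))\|f\|_\infty$, forcing $\|f\|_\infty = 0$, i.e.\ $f \equiv 0$.

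The main obstacle is the behavior at $\partial D$: a one-step estimate degenerates there because $\expect^x[e^{-\lambda\tau_D}] \to 1$ as $x \to \partial D$. The fix — composing $\Phi_\lambda$ with itself and using the tightness of $\mu$ to push the restarted process into a compact subset of $D$ where Lemma~\ref{l.uniformstopping} gives a uniform survival lower bound — is exactly the mechanism already used in the proof of Theorem~\ref{t.m}, so the argument parallels that one but in Laplace-transformed form. Care is needed to verify that the strong Markov property applies cleanly to the composition $\Phi_\lambda^2$ and that all the expectations and interchanges of integration are justified (Tonelli, boundedness of $f$), but these are routine given the tools already developed.
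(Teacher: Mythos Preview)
Your argument is correct and, once the irrelevant remarks about the strong Markov property are stripped away, actually quite clean: from $f=\Phi_\lambda f$ one gets $f=\Phi_\lambda^2 f$, and since
\[
\Phi_\lambda^2 f(x)=\expect^x\Big[e^{-\lambda\tau_D}\int_D\mu(Y_{\tau_D},\dx w)\,\expect^w\big[e^{-\lambda\tau_D}(\mu f)(Y_{\tau_D})\big]\Big],
\]
the inner integral is bounded by $\|f\|_\infty\bigl(1-\tfrac12\eta(\lambda)\bigr)$ uniformly in $Y_{\tau_D}$, via tightness and Lemma~\ref{l.uniformstopping} exactly as you write. No strong Markov property is used (the composition $\Phi_\lambda^2=P^\lambda_D\mu P^\lambda_D\mu$ is purely operator-theoretic; the shift $\theta_{\tau_D}$ does not enter). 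The detour through $C(\overline D)$ in your first paragraph is also not needed for the contraction step.

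The paper's proof takes a different route: a maximum-principle argument. It first observes that $f\in C(\overline D)$, then that for $x\in D$ one has $\Phi_\lambda f(x)<\sup_{\overline D}f$ strictly (since $\tau_D>0$ a.s.), so the supremum must be attained on $\partial D$. There the boundary relation $f(z)=(\mu f)(z)$ from Lemma~\ref{l.philambda_boundary} is combined with tightness (so that $\mu(z,\cdot)$ puts mass $\ge\tfrac12$ on a compact $H$ where $f$ is strictly below its sup) to get a contradiction. Your approach is more quantitative: it actually proves $\|\Phi_\lambda^2\|_{\cL(B_b(D))}<1$, which would also yield Lemma~\ref{l.phiseries} directly, bypassing the ultra-Feller/Fredholm/Krein--Rutman machinery used there. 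The paper's argument, on the other hand, requires no iteration and makes the role of the boundary condition $f|_{\partial D}=\mu f$ more transparent.
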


\begin{proof}
By Lemma~\ref{l.philambda_boundary}, $f=\mk{\Phi(\lambda)} f\in C(\overline{D})$. 
Assume that $\sup_{\overline{D}} f>0$. Note that $\sup_{D^c} \mu f\le \sup_D f$. Using \eqref{e.fPhlp}, for every $x\in D$ we get
\[
\mk{\Phi(\lambda)} f(x)\le 
\expect^x \big[ e^{-\lambda \tau_D}\big]\sup_{\overline{D}} f<\sup_{\overline{D}} f,
\]
because  $\mpr^x$-$a.s.$ we have $\tau_D>0$, by the right-continuity of the trajectories of the process $Y$.
In particular, the maximum of $f$ is attained at $\partial D$. 
\mk{Let $H$ and $\vartheta$ be as in Hypothesis \ref{hyp1}(ii).
Then} $\sup_{H} f =
(1-\delta)\sup_D f$ for some $\delta>0$. 
\mk{As $\Phi(\lambda)f = f$, Lemma \ref{l.philambda_boundary} yields for $z\in \partial D$}, 
\begin{align*}
f(z)&=\mu f(z)=\int_D f(x)\mu(z,\dx x) \leq (1-\delta)\mu(z,H)\sup_{\overline{D}} f + \mu(z,D\setminus H)\sup_{\overline{D}} f  \\
&= \sup_{\overline{D}} f-\delta \mu(z,\mk{H})\sup_{\overline{D}} f\le (1-\delta\mk{\vartheta})\sup_{\overline{D}}f<\sup_{\overline{D}}f,
\end{align*}
a contradiction. So, $\sup_{\overline{D}} f\le 0$. By linearity, $\sup_{\overline{D}} (-f)\le 0$\kb{;}  $f=0$ on $D$.
\end{proof}

\begin{lem}\label{l.phiseries}
We consider $\mk{\Phi(\lambda)}$ as an operator on $C(\overline{D})$. Then the series 
\[
\sum_{n=0}^\infty \mk{\Phi(\lambda)^n}
\]
converges in operator norm for $\lambda >0$.
\end{lem}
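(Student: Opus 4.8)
The plan is to exhibit $\rho\in(0,1)$ with $\|\Phi_\lambda^2\|\le\rho$; since $C(\overline D)$ is a Banach space this forces $\sum_{n\ge0}\|\Phi_\lambda^n\|<\infty$, and hence operator-norm convergence of the Neumann series $\sum_n\Phi_\lambda^n$. First I would record that $\Phi_\lambda$ is a well-defined operator on $C(\overline D)$ with $\|\Phi_\lambda\|\le1$: by Lemma~\ref{l.philambda_boundary} it maps $C(\overline D)$ into itself, and from the representation \eqref{e.fPhlp} together with $|\mu f|\le\|f\|_\infty$ and $\expect^x[e^{-\lambda\tau_D}]\le1$ we get $|\Phi_\lambda f(x)|\le\|f\|_\infty$ for $x\in D$, while on $\partial D$ we have $\Phi_\lambda f=\mu f$, again bounded by $\|f\|_\infty$. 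This single-step bound cannot be improved, since regularity of $\partial D$ (see \eqref{e.reg}) makes $\expect^x[e^{-\lambda\tau_D}]\to1$ as $x\to\partial D$; the contraction must therefore be produced by combining two applications of $\Phi_\lambda$ with the tightness assumption.

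To extract the gain, I would use the tightness in Hypothesis~\ref{hyp1} to fix $H\Subset D$ with $\mu(z,H)\ge\tfrac12$ for all $z\in D^c$, and Lemma~\ref{l.uniformstopping} (with $T=1$) to fix $t\in(0,1]$ and $\eta>0$ with $\mpr^y(\tau_D>t)\ge\eta$ for all $y\in H$. Splitting the expectation at time $t$ gives, for every $y\in H$, $\expect^y[e^{-\lambda\tau_D}]\le\mpr^y(\tau_D\le t)+e^{-\lambda t}\mpr^y(\tau_D>t)=1-(1-e^{-\lambda t})\mpr^y(\tau_D>t)\le1-\kappa$, where $\kappa:=(1-e^{-\lambda t})\eta\in(0,1)$ (this is the one place $\lambda>0$ enters). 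Now take $f\in C(\overline D)$ with $\|f\|_\infty\le1$; then $|\Phi_\lambda f(y)|\le\expect^y[e^{-\lambda\tau_D}]$, so splitting $\mu(\Phi_\lambda f)(z)=\int_D\Phi_\lambda f(y)\,\mu(z,\dx y)$ over $H$ and $D\setminus H$ yields $|\mu(\Phi_\lambda f)(z)|\le(1-\kappa)\mu(z,H)+\mu(z,D\setminus H)=1-\kappa\,\mu(z,H)\le1-\kappa/2$ for all $z\in D^c$. Feeding this into \eqref{e.fPhlp} for $\Phi_\lambda^2 f=\Phi_\lambda(\Phi_\lambda f)$, and using $\Phi_\lambda^2 f=\mu(\Phi_\lambda f)$ on $\partial D$ (Lemma~\ref{l.philambda_boundary} applied to $\Phi_\lambda f\in C_b(D)$), gives $\|\Phi_\lambda^2 f\|_\infty\le1-\kappa/2$, i.e. $\|\Phi_\lambda^2\|\le\rho:=1-\kappa/2<1$.

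Finally, combining $\|\Phi_\lambda\|\le1$ with $\|\Phi_\lambda^2\|\le\rho$ gives $\|\Phi_\lambda^n\|\le\rho^{\lfloor n/2\rfloor}$ for all $n\in\N_0$, hence $\sum_{n\ge0}\|\Phi_\lambda^n\|\le2/(1-\rho)<\infty$, so $\sum_n\Phi_\lambda^n$ converges in operator norm. The only genuine obstacle is the point flagged in the first paragraph: a bound on a single application of $\Phi_\lambda$ does not beat $1$, so one must pass to $\Phi_\lambda^2$ and exploit the tightness of $\{\mu(z,\cdot):z\in D^c\}$ to manufacture a uniform contraction factor — this is the quantitative counterpart of the maximum-principle argument used to prove Lemma~\ref{l.i}.
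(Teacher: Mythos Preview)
Your proof is correct, and it takes a genuinely different route from the paper's. The paper argues indirectly: it shows that $\Phi_\lambda^4$ is compact (via the strong Feller $\Rightarrow$ ultra-Feller mechanism), invokes a Fredholm alternative together with the injectivity result of Lemma~\ref{l.i} to conclude that $1$ lies in the resolvent set of $\Phi_\lambda$, and then uses Krein--Rutman to force $r(\Phi_\lambda)<1$. Your argument is direct and quantitative: you exhibit an explicit $\rho=1-\tfrac12(1-e^{-\lambda t})\eta<1$ with $\|\Phi_\lambda^2\|\le\rho$, combining tightness of $\mu$ with the uniform lower bound on $\mpr^y(\tau_D>t)$ from Lemma~\ref{l.uniformstopping}. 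In effect you bypass Lemma~\ref{l.i} entirely (your bound reproves it) and avoid all the spectral machinery; the price is that your constant degenerates as $\lambda\downarrow 0$, but the statement only asks for $\lambda>0$, so this is harmless. The paper's approach is more structural and would transfer to situations where an explicit two-step contraction is not available but compactness and injectivity are; yours is more elementary and gives an explicit convergence rate, in the same spirit as Corollary~\ref{c.ucSn}.
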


\begin{proof}
It follows from Lemma \ref{l.philambda_boundary}, that $\mk{\Phi(\lambda)^2}$ defines a strong Feller operator on $\overline{D}$. As is \mk{well known}, see \cite[\S 1.3]{revuz}, its square, i.e., $\mk{\Phi(\lambda)^4}$, is an ultra-Feller operator, i.e., it maps bounded subsets of $B_b(\overline{D})$ to equicontinuous subsets of $C(\overline{D})$. In particular, $\mk{\Phi(\lambda)}^4$ is a compact operator. By a variant of the Fredholm alternative, see \cite[Theorem 15.4]{krein82}, $I-\mk{\Phi(\lambda)}$ is invertible if and only if it is injective. The latter was proved in Lemma \ref{l.i}. So, $1$ belongs to the resolvent set of $\mk{\Phi(\lambda)}$. By the Krein--\mk{Rutman} Theorem, see \cite[Proposition V.4.1]{schaefer}, the spectral radius $r(\mk{\Phi(\lambda)})$ belongs to the spectrum of $\mk{\Phi(\lambda)}$. Since $\|\mk{\Phi(\lambda)}\| \leq 1$ and $1$ belongs to the resolvent set of $\mk{\Phi(\lambda)}$, we must have $r(\mk{\Phi(\lambda)})< 1$, which is equivalent to the claim.
\end{proof}

We can now relate the resolvents \mk{$R(\lambda)$ 
and 
$R_D(\lambda)$.}
\begin{lem}\label{l.resolvents}
For $\lambda>0$ and $f \in B_b(D)$ we have $\mk{R(\lambda)} f = \sum_{n=0}^\infty \mk{\Phi(\lambda)^n R_D(\lambda)} f$.
In particular,  the identity $\mk{R(\lambda) = R_D(\lambda) +\Phi(\lambda) R(\lambda)}$ holds true.
\end{lem}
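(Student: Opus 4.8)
The plan is to reduce the whole statement to one computation: the Laplace transform in the time variable intertwines the convolution operator $S$ with composition by $\Phi_\lambda$. To phrase this, call $g$ a \emph{time-dependent kernel} if $g\colon(0,\infty)\times D\times\mathscr B(D)\to[0,\infty]$, the map $g(t,w,\cdot)$ is a measure on $D$ for every $(t,w)$, and $(t,w)\mapsto g(t,w,A)$ is measurable for every $A$; for such $g$ and measurable $h\colon D\to[0,\infty]$ put
\[
(\widehat g\,h)(x):=\int_0^\infty e^{-\lambda t}\int_D g(t,x,\dx y)\,h(y)\,\dt\ \in[0,\infty],\qquad x\in D.
\]
Recalling that $Sg(t,x,A)=\int_0^t\int_D\varphi(s,x,\dx w)\,g(t-s,w,A)\,\ds$ is again a time-dependent kernel, the claim is that
\[
\widehat{Sg}\,h=\Phi_\lambda\big(\widehat g\,h\big).
\]
This is just Tonelli's theorem: starting from
\[
\widehat{Sg}\,h(x)=\int_0^\infty e^{-\lambda t}\int_0^t\!\!\int_D\varphi(s,x,\dx w)\Big(\int_D g(t-s,w,\dx y)h(y)\Big)\ds\,\dt,
\]
one integrates first in $t$, substitutes $t=s+r$, and uses $e^{-\lambda t}=e^{-\lambda s}e^{-\lambda r}$ to bring the expression to $\int_0^\infty e^{-\lambda s}\int_D\varphi(s,x,\dx w)\,(\widehat g\,h)(w)\,\ds=\Phi_\lambda(\widehat g\,h)(x)$. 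No finiteness hypothesis is needed, all integrands being nonnegative.

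Granting this, note $\widehat{p^D}\,f=R^D_\lambda f$ and $\widehat{k}\,f=R_\lambda f$, the latter finite for $f\in B_b(D)$ because $k_t(x,D)\le 1$ by Lemma~\ref{l.submarkovian}. The map $g\mapsto\widehat g\,f$ is additive in $g$, so applying it to the Duhamel identity \eqref{e.pf}, viewed as the identity of measures $k_t(x,\cdot)=p^D_t(x,\cdot)+Sk(t,x,\cdot)$, gives, for $f\ge 0$,
\[
R_\lambda f=R^D_\lambda f+\widehat{Sk}\,f=R^D_\lambda f+\Phi_\lambda R_\lambda f .
\]
For the series, iterate the intertwining relation from $g=p^D$: since $S^np^D=S(S^{n-1}p^D)$, induction yields $\widehat{S^np^D}\,f=\Phi_\lambda^n R^D_\lambda f$ for all $n\in\N_0$ and $f\ge 0$. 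Plugging the expansion \eqref{e.dk}, $k_t(x,\cdot)=\sum_{n\ge 0}S^np^D(t,x,\cdot)$, into $g\mapsto\widehat g\,f$ and commuting the summation past the $\dt$- and $\dy$-integrations (again Tonelli, every term nonnegative) produces $R_\lambda f=\sum_{n\ge 0}\Phi_\lambda^n R^D_\lambda f$ for $f\ge 0$.

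Finally, for arbitrary $f\in B_b(D)$ decompose $f=f^+-f^-$ with $f^\pm\in B_b(D)$ nonnegative. Both series $\sum_n\Phi_\lambda^n R^D_\lambda f^\pm$ converge pointwise, with sums the finite functions $R_\lambda f^\pm$, so they may be subtracted term by term; linearity of $R^D_\lambda$ and of every $\Phi_\lambda^n$ then gives $\sum_n\Phi_\lambda^n R^D_\lambda f=R_\lambda f^+-R_\lambda f^-=R_\lambda f$, and likewise $R_\lambda f=R^D_\lambda f+\Phi_\lambda R_\lambda f$. (Alternatively, one checks $R^D_\lambda f\in C_0(D)\subset C(\overline D)$ using Lemma~\ref{l.pd}, and then Lemma~\ref{l.phiseries} supplies norm convergence of $\sum_n\Phi_\lambda^n$ on $C(\overline D)$ and hence the same conclusion, now with uniform convergence.) The only thing to watch throughout is the repeated interchange of the Laplace integral, the spatial integrations and the infinite sum; these are all justified by Tonelli/monotone convergence once one has reduced to $f\ge 0$, with finiteness ensured by $k_t(x,D)\le 1$, so I anticipate no substantive obstacle here.
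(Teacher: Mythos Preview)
Your proof is correct and follows essentially the same route as the paper: both reduce everything to the single Tonelli computation showing that the Laplace transform intertwines $S$ with composition by $\Phi_\lambda$, then feed in the series \eqref{e.dk} by induction. You are in fact a bit more careful than the paper, making explicit the reduction to $f\ge 0$ and the passage back to general $f\in B_b(D)$, and spelling out the Duhamel identity $R_\lambda=R^D_\lambda+\Phi_\lambda R_\lambda$ directly from \eqref{e.pf}.
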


\begin{proof}
To prove the lemma, we make use of the series representation \eqref{e.dk} for the kernel $k$. Let us first see how $\mk{\Phi(\lambda)}$ interacts with the operator $S$. To that end, let $h : (0,\infty)\times D \to [0,\infty)$ and $x\in D$.
By Tonelli's theorem, 
\begin{align*}
& \int_0^\infty\dt  \, e^{-\lambda t}S h(t,x)\\
=& \quad \int_0^\infty\dt  \, e^{-\lambda t}\int_0^t \dx s\int_D \varphi (s, x, \dx w)  h(t-s, w)\\
= & \int_0^\infty \dx s \int_s^\infty \dt  \, e^{-\lambda t}\int_D \varphi (s, x, \dx w)  h(t-s, w)\\
= & \int_0^\infty \dx s \int_{\mk{D}}  e^{-\lambda s}\varphi (s, x, \dx w) \int_0^\infty \, dr \, e^{-\lambda r}  h(r, w)\\
= &  \Big(\mk{\Phi(\lambda)} \int_0^\infty e^{-\lambda r } h(r, \cdot)\, \dx r\Big)(x).
\end{align*}
Summarizing, we obtain the Laplace transform (in $\lambda$) of $Sh$ by applying $\mk{\Phi(\lambda)}$ to the Laplace transform
of $h$.
By this observation and induction, \eqref{e.dk} yields
\[
\mk{R(\lambda)} f = \sum_{k=0}^\infty \mk{\Phi(\lambda)^kR_D(\lambda)} f,
\]
as claimed.
\end{proof}
\kb{
\begin{example}\label{e.Phil}
For $\mu$ in Example~\ref{ex.dirac}, 
 $(\Phi(\lambda)f)(x)=f(x_0) \left(R_D(\lambda)\kappa_D\right)(x)$, so 
\begin{align*}
R(\lambda)f(x)&=R_D(\lambda)f(x)+R_D(\lambda)\kappa_D(x)\, R_D(\lambda)f(x_0)\sum_{n=0}^\infty R_D(\lambda)\kappa_D(x_0)^n \\
&=R_D(\lambda)f(x)+R_D(\lambda)\kappa_D(x)\,R_D(\lambda)f(x_0)/\left(1-R_D(\lambda)\kappa_D(x_0)\right),
\end{align*}
for $x\in D$, $\lambda>0$, and $f \in B_b(D)$.
\end{example}
}
We now come to the main result of this section, in which we characterize the closure of the range of $\mk{R(\lambda)}$. Given a function
$f\in C_b(D)$, we let
\begin{equation}\label{eq.fmu}
f_\mu (x) \coloneqq \begin{cases} f(x), & \mbox{ for } x\in D,\\
\mu (x, f), & \mbox{ for } x\in D^c,
\end{cases}
\end{equation}
and we define the space $C_\mu (D)$ by
\begin{equation}\label{eq.cmu}
C_\mu (D) \coloneqq \{ f\in C_b(D) : f_\mu \mk{\mbox{ is continuous on } \overline{D}}\}.
\end{equation}
\mk{\kb{By Hypothesis \ref{hyp1}(iii),} the map $D^c\ni x\mapsto \mu(x, f)$ is 
continuous 
on $\partial D$. Thus, the condition that $f_\mu$ is continuous on $\overline{D}$ is equivalent
with $f(x_n) \to \mu(x, f)$ whenever $(x_n)\subset D$ converges
to $x\in \partial D$. We 
can \kb{therefore}, similar to the remarks
preceding Lemma \ref{l.philambda_boundary}, identify 
$C_\mu(D)$
with the following (closed) subspace of $C_b(\overline{D})$:
\begin{equation}\label{eq.cmu2}
C_\mu (\overline{D}) \coloneqq \Big\{ f\in C_b(\overline{D}) : 
f(z) = \int_D f(x) \mu(z, \dx x) \mbox{ for all } z\in \partial D\Big\}.
\end{equation}
}

\mk{
\begin{example}\label{ex.1}
    Let $D= B(0,1)\subset\Rd$
be the ball of radius 1 centered at $0$ 
and $\mu(z, \cdot) = \delta_{0}$
    for $z\in D^c$; see Example \ref{ex.dirac}. In
    this case,
    \[
    C_\mu(\overline{D}) = \{ f\in C_b(\overline{D}) : 
    f(z) = f(0) \mbox{ for all } z\in \partial D\}
    \]
    and the extension $f_\mu$ is given by
    $f_\mu(x) = f(0)$ for $x\in D^c$.
\end{example}
}

\mk{\begin{example}\label{ex.2}
    Let $D= B(0,1)$ be as in Example~\ref{ex.1}, but let
    \[
    \mu(z, \cdot) = \Big(\frac{1}{2} + \frac{1}{2|z|}\Big)
    \delta_0 + \Big(\frac{1}{2} - \frac{1}{2|z|}\Big)\delta_{(1-|z|^{-1})\frac{z}{|z|}},\qquad z\in D^c.
    \]
    Then $\mu$ satisfies Hypothesis \ref{hyp1}. Note that
    $\mu(z, \cdot) = \delta_0$ when $|z|=1$\kb{,} so the space
    $C_\mu(\overline{D})$ is the same as in Example \ref{ex.1}, but the extension $f_\mu$ is different: 
    \[
    f_\mu(x) = \Big(\frac{1}{2} + \frac{1}{2|x|}\Big)
    f(0) + \Big(\frac{1}{2} - \frac{1}{2|x|}\Big)f\Big((1-|x|^{-1})\frac{x}{|x|}\Big),\quad x\in D^c.   \]
\end{example}}

\mk{
\begin{example}\label{ex.3}
    Let $\mu(z, \cdot) = \lambda_d(D)^{-1}\lambda_d$, 
    where $\lambda_d$ is the
$d$-dimensional Lebesgue measure on $D$. 
    Denote
    $\bar{f}\coloneqq \lambda(D)^{-1}\int_D f(x) \dx x$. Then $f_\mu(x) = \bar{f}$, $x\in D^c$, and
    \[
    C_\mu(\overline{D}) = \{ f\in C_b(\overline{D}) : 
    f(z) = \bar{f} \mbox{ for all } z\in \partial D\}.
    \]
    \end{example}
}

\mk{Note that we may} rephrase \kb{the second statement of} Lemma \ref{l.philambda_boundary} by saying that $\mk{\Phi(\lambda)} f \in C_\mu (D)$ for all $f\in C_b(D)$.

\begin{thm}\label{t.closurerange}
For $\lambda>0$, the closure of the range of $\mk{R(\lambda)}$ equals $C_\mu(D)$.
\end{thm}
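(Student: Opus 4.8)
The plan is to establish both inclusions. For $\overline{\operatorname{ran}R_\lambda}\subset C_\mu(D)$, I would use the identity $R_\lambda=\sum_{n=0}^\infty \Phi_\lambda^n R_\lambda^D$ from Lemma~\ref{l.resolvents}, together with Lemma~\ref{l.phiseries}, which guarantees convergence of this series in operator norm on $C(\overline D)$. The key observations are: first, $R_\lambda^D$ maps $B_b(D)$ into $C_0(D)\subset C_b(D)$ (this follows from the strong Feller property of $(P^D_t)$ in Lemma~\ref{l.pd}, since $R_\lambda^D f=\int_0^\infty e^{-\lambda t}P^D_t f\,\dt$ and one can check the orbit lands in $C_0(D)$ — cf.\ the argument in Proposition~\ref{p.killedgenerator}(i)$\Rightarrow$(ii)); second, by Lemma~\ref{l.philambda_boundary}, $\Phi_\lambda$ maps $C_b(D)$ into $C_\mu(D)$, and in particular $\Phi_\lambda$ maps $C_\mu(D)$ into itself (since $C_\mu(D)\subset C_b(D)$). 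Hence each term $\Phi_\lambda^n R_\lambda^D f$ with $n\ge 1$ lies in $C_\mu(D)$, while the $n=0$ term $R_\lambda^D f$ lies in $C_0(D)$. Now $C_0(D)\subset C_\mu(D)$ because for $f\in C_0(D)$ the zero extension across $\partial D$ coincides with $f_\mu$ (as $\mu(z,f)\to 0$ as $z\to\partial D$ when $f$ vanishes near $\partial D$ — more carefully, $f\in C_0(D)$ means $f$ extends continuously by $0$ to $\overline D$, so $f|_{\partial D}=0$, and one needs $\mu(z,f)\to 0$; this uses weak continuity of $\mu$ together with tightness, exactly as in the proof of Lemma~\ref{l.i}). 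So the partial sums lie in $C_\mu(D)$, and since $C_\mu(D)$ is a closed subspace of $C_b(D)$ — being defined by the closed condition ``$f_\mu$ continuous at $\partial D$'', which is preserved under uniform limits — the norm-convergent sum $R_\lambda f$ lies in $C_\mu(D)$. Thus $\operatorname{ran}R_\lambda\subset C_\mu(D)$, and taking closure gives one inclusion.

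For the reverse inclusion $C_\mu(D)\subset\overline{\operatorname{ran}R_\lambda}$, I would argue that $\operatorname{ran}R_\lambda$ is dense in $C_\mu(D)$. The natural route is via the resolvent identity and an approximation-of-the-identity argument: I expect that for $g\in C_\mu(D)$ one has $\lambda R_\lambda g\to g$ uniformly on compact subsets of $D$ as $\lambda\to\infty$ (a $C_b$-Feller-type statement for the kernel $k$), but uniform convergence on all of $D$ up to $\partial D$ is what is needed for density in the sup-norm. An alternative, cleaner approach: use Lemma~\ref{l.resolvents} backwards. Given $g\in C_\mu(D)$, I want to solve $R_\lambda f=g$ approximately. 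Writing the target relation formally as $f=(\lambda-\mathscr A^D)g - \lambda\,(\text{correction from }\Phi_\lambda)$ suggests looking at $h:=g-\Phi_\lambda(\text{something})$; but the most robust plan is to first treat the two building blocks separately. Since $\operatorname{ran}R_\lambda^D$ is dense in $C_0(D)$ (standard for the killed Feller semigroup), and since for $g\in C_b(D^c)$ Lemma~\ref{l.harmonic} gives $P_D^\lambda g=\Phi_\lambda(\ldots)\in\operatorname{ran}$ of the $\lambda$-harmonic extension, I would decompose an arbitrary $g\in C_\mu(D)$ as $g = h + (g-h)$ where $h:=\Phi_\lambda g_0$ for a suitable $g_0$ matching the boundary behavior $\mu g$ of $g$ at $\partial D$ (this is possible because $\Phi_\lambda f=\mu f$ on $\partial D$ by Lemma~\ref{l.philambda_boundary}, so I can arrange $g-h\in C_0(D)$), then use that $h\in\overline{\operatorname{ran}R_\lambda}$ via Lemma~\ref{l.resolvents} (since $\Phi_\lambda g_0 = R_\lambda$ applied to... — more precisely, from $R_\lambda = R_\lambda^D+\Phi_\lambda R_\lambda$ one gets $\Phi_\lambda$-images of resolvent elements are in the range) and that $g-h\in C_0(D)\subset\overline{\operatorname{ran}R_\lambda^D}\subset\overline{\operatorname{ran}R_\lambda}$ (the last inclusion because $R_\lambda f - R_\lambda^D f=\Phi_\lambda R_\lambda f\to 0$ appropriately, or directly: $R_\lambda^D = (I - \Phi_\lambda)R_\lambda$ by Lemma~\ref{l.resolvents}, not obviously giving range containment — so here one instead observes $\operatorname{ran}R_\lambda^D\subset\operatorname{ran}R_\lambda$ via $R_\lambda^D f = R_\lambda f - \Phi_\lambda R_\lambda f$ and noting $\Phi_\lambda R_\lambda f$ is itself... ).

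The main obstacle, I expect, is precisely this last bookkeeping: cleanly showing $\operatorname{ran}R_\lambda^D\subset\overline{\operatorname{ran}R_\lambda}$ and that the ``harmonic part'' $\Phi_\lambda g_0$ is approximable by elements of $\operatorname{ran}R_\lambda$. The identity $R_\lambda=\sum_n\Phi_\lambda^n R_\lambda^D$ shows $\operatorname{ran}R_\lambda\subset\overline{\lh\{\Phi_\lambda^n R_\lambda^D f\}}$, so one would want the converse: that finite combinations $\sum_{n=0}^N \Phi_\lambda^n R_\lambda^D f$ lie in $\operatorname{ran}R_\lambda$, which holds because $R_\lambda(\,\cdot\,)$ on a suitable element reproduces the tail. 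Concretely, for fixed $f$, $R_\lambda f - \sum_{n=0}^N \Phi_\lambda^n R_\lambda^D f = \sum_{n>N}\Phi_\lambda^n R_\lambda^D f = \Phi_\lambda^{N+1} R_\lambda f$, which is small in norm by Lemma~\ref{l.phiseries}; hence each partial sum is within $\|\Phi_\lambda^{N+1}\|\,\|R_\lambda f\|$ of the range element $R_\lambda f$, so all partial sums — hence $R_\lambda^D f$ and $\Phi_\lambda^n R_\lambda^D f$ individually, by telescoping and taking differences — lie in $\overline{\operatorname{ran}R_\lambda}$. Since $\{R_\lambda^D f : f\in B_b(D)\}$ is dense in $C_0(D)$ and $\{\Phi_\lambda R_\lambda^D f\}$ together with its iterates spans (after closure) the $\lambda$-harmonic-type functions whose boundary traces $\mu f$ exhaust the possible traces of elements of $C_\mu(D)$, the closed linear span of all these is exactly $C_\mu(D)$. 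I would then conclude $\overline{\operatorname{ran}R_\lambda}=C_\mu(D)$.
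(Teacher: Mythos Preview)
Your first inclusion contains a genuine error: the claim $C_0(D)\subset C_\mu(D)$ is \emph{false} in general. Take $\mu(z,\cdot)=\delta_{y_0}$ for a fixed $y_0\in D$ (which certainly satisfies Hypothesis~\ref{hyp1}); then for any $f\in C_0(D)$ with $f(y_0)\neq 0$ you have $f_\mu(z)=f(y_0)\neq 0$ for all $z\in D^c$, while $f(x)\to 0$ as $D\ni x\to\partial D$, so $f_\mu$ is discontinuous at $\partial D$ and $f\notin C_\mu(D)$. The paper explicitly records this in Remark~\ref{r.notfeller}. Your attempted justification (``tightness forces $\mu(z,f)\to 0$'') confuses vanishing of $f$ near $\partial D$ with vanishing of $\int_D f\,d\mu(z,\cdot)$; tightness pushes the mass of $\mu(z,\cdot)$ \emph{away} from $\partial D$, precisely where $f$ need not be small. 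Consequently the individual terms $R_\lambda^D f$ and $\Phi_\lambda^n R_\lambda^D f$ of the series are \emph{not} in $C_\mu(D)$, and you cannot conclude by closedness of $C_\mu(D)$. The paper instead first shows $R_\lambda f\in C(\overline D)$ (using norm convergence in $C(\overline D)$, which does contain all the terms), and then verifies the boundary identity $u(x_0)=\mu(x_0,u)$ by a direct telescoping computation at $x_0\in\partial D$, using $R_\lambda^D f(x_0)=0$ and $\Phi_\lambda g(x_0)=\mu(x_0,g)$.

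The same misconception undermines your reverse inclusion: you try to show $C_0(D)=\overline{\operatorname{ran}R_\lambda^D}\subset\overline{\operatorname{ran}R_\lambda}$, but once the first inclusion is established this would force $C_0(D)\subset C_\mu(D)$, which we have just seen fails. Your partial-sum argument shows only that $\sum_{n=0}^N\Phi_\lambda^n R_\lambda^D f\to R_\lambda f$, i.e.\ that $R_\lambda f$ is a limit of the partial sums, not that the partial sums (in particular $R_\lambda^D f$) lie in $\overline{\operatorname{ran}R_\lambda}$. You do touch the right idea when you write $g-h$ with $h=\Phi_\lambda g_0$; taking $g_0=f$ gives $g:=f-\Phi_\lambda f\in C_0(D)$ by Lemma~\ref{l.philambda_boundary}. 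The paper then approximates $g$ by $R_\lambda^D u_n$ in $C_0(D)$ and observes the telescoping identity $\sum_{k=0}^N\Phi_\lambda^k g=f-\Phi_\lambda^{N+1}f$, so that $R_\lambda u_n=\sum_k\Phi_\lambda^k R_\lambda^D u_n$ converges to $f$. The point is that one never needs $g$ or $\Phi_\lambda f$ to lie in $\overline{\operatorname{ran}R_\lambda}$ separately; only their combination $f$ is approximated, and directly by genuine range elements $R_\lambda u_n$.
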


\begin{proof}
Let us first prove that the range of $\mk{R(\lambda)}$ is contained in $C_\mu(D)$. 
To that end, fix $f\in B_b(D)$. As $\mk{P_D(t)}f \in C_0(D)$ we have $\mk{R_D(\lambda)} f \in C_0(D)\subset C(\overline{D})$ for any $f\in B_b(D)$. Using Lemma \ref{l.philambda_boundary} and induction, we have $\mk{\Phi(\lambda)^kR_D(\lambda)} f \in C_\mu (D) \subset C(\overline{D})$ for all $k \geq 1$ and  Lemma \ref{l.phiseries} and Lemma \ref{l.resolvents} imply that $\mk{R(\lambda)} f \in C(\overline{D})$.

Now fix $x_0 \in \partial D$. Putting $u = \mk{R(\lambda)}  f$, we find
\begin{align*}
u(x_0) & = \mk{R(\lambda)}  f(x_0)  = \mk{R_D(\lambda)}  f(x_0)  + \sum_{k=1}^\infty \Big(\mk{\Phi(\lambda)^k R_D(\lambda)} f\Big)(x_0)\\
& = 0 + \sum_{k=1}^\infty \mu \big( x_0, \mk{\Phi(\lambda)^{k-1} R_D(\lambda)}  f\big)\\
& = \mu \Big( x_0 , \sum_{k=1}^\infty \mk{\Phi(\lambda)^{k-1} R_D(\lambda)} f\Big) = \mu (x_0, u).
\end{align*}
Here the second equality uses Lemma \ref{l.resolvents}, 
the third Lemma \ref{l.philambda_boundary} and the fact that $\mk{R_D(\lambda)} f \in C_0(D)$. The fourth equality uses dominated convergence and the last Lemma \ref{l.resolvents} again. This shows that $u\in C_\mu(D)$. As the latter is closed and contains the range of $\mk{R(\lambda)}$, it also contains the closure of the range.\smallskip

To prove the converse, we only need to show that the range of $\mk{R(\lambda)}$ is dense in $C_\mu(D)$. To that end, let $f\in C_\mu(D)$ and $g\coloneqq f-\mk{\Phi(\lambda)} f$. By Lemma \ref{l.philambda_boundary}, $g\in C_0(D)$. Since the semigroup of the killed process is strongly continuous on $C_0(D)$, the domain of its generator is 
dense in $C_0(D)$. We thus find a bounded sequence $(u_n)\subset C_0(D)$ such that $\mk{R_D(\lambda)} u_n \to g$ with respect to $\|\cdot\|_\infty$.

Next observe that
\begin{align*}
\sum_{k=0}^N \mk{\Phi(\lambda)^k R_D(\lambda)} u_n & \to \sum_{k=0}^N \mk{\Phi(\lambda)}^k g\\
&= \sum_{k=0}^N \big(\mk{\Phi(\lambda)}^k f - \mk{\Phi(\lambda)}^{k+1}f\kb{\big)} = f- \mk{\Phi(\lambda)}^{N+1}f
\end{align*}
as $n\to \infty$. Given $\eps>0$, we may, as a consequence of Lemma \ref{l.phiseries}, pick $N$ so large, that $\sum_{k\geq N} \|\mk{\Phi(\lambda)}^k\| \leq \eps$. Taking Lemma \ref{l.resolvents} into account, we get
\begin{align*}
\limsup_{n\to\infty} \|\mk{R(\lambda)} u_n - f\|_\infty &\leq C\eps\|\mk{R_D(\lambda)} \|_\infty + \|\mk{\Phi(\lambda)}^{N+1}f\|_\infty\\
&\leq C\|\mk{R_D(\lambda)}\|_\infty\eps +\eps \|f\|_\infty.
\end{align*}
As $\eps>0$ was arbitrary, we see that $\mk{R(\lambda)} u_n \to f$.
\end{proof}

\section{The \kb{generator of the}
semigroup}\label{s.sg}

\begin{prop}\label{p.strongcontinuity}
$\mk{K(t)}f \to f$ uniformly
as $t\to 0$ if, and only if, $f\in C_\mu(D)$.
\end{prop}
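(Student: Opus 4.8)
The plan is to prove both directions using the resolvent characterization from the previous section together with standard semigroup theory. The key link is that $(K_t)$ should be (at least) a $C_b$-Feller or strongly continuous semigroup on the space $C_\mu(D)$, and that its resolvent is exactly $R_\lambda$, whose range closure we have just identified in Theorem~\ref{t.closurerange}.

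First I would treat the ``only if'' direction. Suppose $K_tf \to f$ uniformly as $t\to 0$ for some $f\in B_b(D)$. Since $K_t f \in C_\mu(D)$ for every $t>0$ — this follows because $k_t(x,\cdot)$ has a density component built from $p^D$ (which is strong Feller, sending $B_b(D)$ into $C_0(D)$ by Lemma~\ref{l.pd}) together with the reflected contributions, each of which lands in $C_\mu(D)$ by the reasoning behind Theorem~\ref{t.closurerange}; more cleanly, one can note $\lambda R_\lambda f \to$ nothing directly, so instead argue $K_tf\in\overline{\mathrm{ran}\,R_\lambda}=C_\mu(D)$ by writing $K_t f$ as a limit of resolvent-type expressions, or simply observe $R_\lambda(K_t f)$-based density arguments — and $C_\mu(D)$ is closed in $\|\cdot\|_\infty$ (as the preimage of the closed set $C_b(\R^d)$ under the bounded-to-$B_b(\R^d)$ map $g\mapsto g_\mu$ composed with evaluation at $\partial D$), the uniform limit $f$ of $K_tf$ lies in $C_\mu(D)$.

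For the ``if'' direction, let $f\in C_\mu(D)$. The standard route is: (1) show $t\mapsto K_tf$ is right-continuous at every $t>0$ in $\|\cdot\|_\infty$ for $f$ in a dense subset, namely the range of $R_\lambda$; for such $f = R_\lambda g$ one computes $K_t R_\lambda g = R_\lambda K_t g$ and uses the resolvent identity plus $\int_0^\infty e^{-\lambda s}K_{t+s}g\,ds$-manipulations to get $\|K_tR_\lambda g - R_\lambda g\|_\infty \le (e^{\lambda t}-1)\|R_\lambda g\|_\infty + e^{\lambda t}\int_0^t e^{-\lambda s}\|K_s g\|_\infty\,ds \to 0$; (2) since $\{K_t : 0<t<1\}$ is uniformly bounded (by $1$, by Theorem~\ref{t.m}) and the set where $K_tf\to f$ uniformly is closed, a density argument extends convergence from $\mathrm{ran}\,R_\lambda$ to its closure $C_\mu(D)$, which is exactly the conclusion.

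The main obstacle I anticipate is step (1): proving uniform (not just pointwise) convergence $K_tf\to f$ on the dense set $\mathrm{ran}\,R_\lambda$. Pointwise right-continuity of orbits is cheap from the resolvent formula and dominated convergence; upgrading to the supremum norm needs an equicontinuity or uniform-integrability input, which here should come from the strong Feller/ultra-Feller properties established in Lemma~\ref{l.philambda_boundary} and Lemma~\ref{l.phiseries} — the resolvent $R_\lambda$ maps into $C_\mu(D)$ with enough compactness (via $\Phi_\lambda^4$) that bounded sets have equicontinuous images, and then the uniform estimate above closes the argument. A secondary subtlety is justifying the interchange $K_t R_\lambda = R_\lambda K_t$ rigorously at the kernel level, which follows from Lemma~\ref{l.ck} (Chapman--Kolmogorov) and Tonelli's theorem.
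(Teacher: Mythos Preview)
Your ``if'' direction is essentially the paper's argument: the paper cites a general fact that $K_tf\to f$ in norm for $f$ in the range of $R_\lambda$, then uses Theorem~\ref{t.closurerange} and a density argument to extend to all of $C_\mu(D)$. Your explicit estimate for $\|K_tR_\lambda g - R_\lambda g\|_\infty$ is a correct way to make this concrete (the interchange $K_tR_\lambda = R_\lambda K_t$ at the kernel level is Fubini, as you say). Your worry that this is the ``main obstacle'' is misplaced --- the estimate works exactly as written and needs no equicontinuity or compactness input.

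The genuine gap is in your ``only if'' direction. You assert $K_tf\in C_\mu(D)$ for $t>0$, but this is not established at this point in the paper, and your suggested justifications do not close. The strong Feller property (Theorem~\ref{t.strongfeller}) comes \emph{after} this proposition and in any case only gives $K_tf\in C_b(D)$, not $C_\mu(D)$. The Duhamel decomposition $K_tf = P_t^Df + (\text{reflected part})$ does not help term by term, since $P_t^Df\in C_0(D)$ and in general $C_0(D)\not\subset C_\mu(D)$ (indeed $\mu(z,\cdot)$ is a probability measure on $D$, so $\mu(z,g)$ need not vanish for $g\in C_0(D)$). Your fallback --- writing $K_tf$ as a limit of resolvent-type expressions --- is circular: approximating $K_tf$ by $\lambda R_\lambda K_tf$ in norm requires precisely the strong continuity of the orbit $s\mapsto K_sK_tf$ at $s=0$ that you are trying to establish.

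The paper's argument for ``only if'' sidesteps this entirely. It sets $X\coloneqq\{f\in B_b(D): K_tf\to f\mbox{ uniformly as }t\to 0\}$, observes that $X$ is a closed, $(K_t)$-invariant subspace on which the restriction of $(K_t)$ is a $C_0$-semigroup, and then invokes Hille--Yosida: the generator of the restriction has dense domain in $X$. Since that domain is contained in $D(\cA)=\mathrm{ran}\,R_\lambda$, one gets $X\subset\overline{\mathrm{ran}\,R_\lambda}=C_\mu(D)$. No knowledge of where $K_tf$ lives is needed.
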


\begin{proof}
As is well known (see, e.g.\ \cite[Remark \ 2.5]{kunze09}), $\mk{K(t)}f \to f$ in the norm $\|\cdot\|_\infty$ if $f$ belongs to the domain of the generator of $\mk{K}$, i.e., the range of $\mk{R(\lambda)}$. By Theorem \ref{t.closurerange}, the latter is a dense subset of $C_\mu(D)$ so a standard approximation argument shows that the same is true for every $f\in C_\mu(D)$.

To see the converse, let $X\coloneqq \{f\in B_b(D) : \mk{K(t)}f \to f \mbox{ as } t\to 0\}$. Then $X$ is a closed subspace of $B_b(D)$ that is invariant under the semigroup $\mk{K}$. Moreover, the restriction of $(\mk{K(t)})$ to $X$ is strongly continuous. By the Hille--Yosida theorem, the generator of the restriction to $X$, which is nothing more than the part of the full generator in $X$, is dense in $X$. But then $X$ must be contained in the closure of the range of $\mk{R(\lambda)}$, i.e., $C_\mu(D)$.
\end{proof}

\begin{rem}\label{r.notfeller}
It follows from Proposition \ref{p.strongcontinuity} that the semigroup $\mk{K}$ is \emph{not}, in general, a Feller semigroup, since typically
$C_0(D) \not\subset C_\mu (D)$, whence the orbits of functions in $C_0(D)$ are not $\|\cdot\|_\infty$-continuous.
\end{rem}

We can now prove the first main result of this section.

\begin{thm}\label{t.strongfeller}
$\mk{K}$ 
is a $C_b$-semigroup and has the strong Feller property.
\end{thm}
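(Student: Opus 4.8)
The goal is to show that $(K_t)$ is a $C_b$-semigroup (in the sense of Definition~\ref{d.cbsemigroup}) and is strong Feller. The strong Feller property is the easier half: for $t>0$ we must show $K_tB_b(D)\subset C_b(D)$. The natural route is the Duhamel decomposition together with the resolvent identity from Lemma~\ref{l.resolvents}, but a cleaner path is to split a single time step using the semigroup law and the perturbation formula \eqref{e.pf}. Writing $K_t = P^D_t + (\text{contributions involving at least one reflection})$, I would argue that $P^D_t$ is already strong Feller by Lemma~\ref{l.pd}, and that each reflection term, being of the form $Sk(t,x,A)$, factors through the operator $\Phi$-type kernel $\varphi$; since $\varphi(s,x,\cdot)$ inherits continuity in $x$ from the continuity of $p^D_s(x,v)$ (indeed $x\mapsto \Phi_\lambda f(x)$ is strong Feller by Lemma~\ref{l.philambda_boundary}), and the remaining factor $k_{t-s}(w,A)$ is merely bounded, a dominated-convergence / uniform-integrability argument gives continuity of $x\mapsto Sk(t,x,A)$, hence of $x\mapsto K_tf(x)$. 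Uniform integrability of the relevant integrands for $x$ in compact subsets of $D$ is exactly the kind of Vitali-type argument used in the proof of Lemma~\ref{l.philambda_boundary-g}, and I would invoke it the same way.

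For the $C_b$-semigroup property, the substantive requirements are: (a) $(K_t)$ is a semigroup of bounded operators on $C_b(D)$ with $\sup_{t\le 1}\|K_t\|<\infty$ — here $\|K_t\|\le 1$ by Theorem~\ref{t.m}; (b) $K_tf\to f$ pointwise (indeed uniformly on compacta) as $t\to0$ for $f\in C_b(D)$; and (c) $K_t$ maps $C_b(D)$ into $C_b(D)$. Point (c) follows from the strong Feller property just established (for $t>0$) together with (b) at $t=0$. For (b), I would again use \eqref{e.pf}: $K_tf - f = (P^D_tf - f) + Sk(t,\cdot)f$. The first term tends to $f$ on compacta since $(P^D_t)$ is a $C_b$-Feller semigroup (Lemma~\ref{l.pd}); the second is controlled in absolute value by $\|f\|_\infty S\one(t,x)$, and $S\one(t,x)=\int_0^t\varphi(s,x,D)\dx s = \Pb^x(\tau_D<t)$ (via Lemma~\ref{l.phi}(b) and \eqref{eq:a1}), which tends to $0$ uniformly on compact subsets of $D$ by Lemma~\ref{l.uniformstopping}. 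That gives $K_tf\to f$ uniformly on compacta, which is precisely the $C_b$-Feller convergence at $t=0$.

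The main obstacle, I expect, is the strong Feller property at positive times, specifically handling the tail of the reflection series uniformly: one needs continuity of $x\mapsto \sum_{n\ge0}S^np^D(t,x,A)$, and while each summand is continuous, uniform convergence of the series in $x$ is needed to pass continuity to the sum. This is where Corollary~\ref{c.ucSn} earns its keep: $S^n\one(t,x)\le(1-\tfrac{\eta}{2})^{\lfloor n/2\rfloor}$ gives a geometric, $x$-uniform bound on the tail $\sum_{n\ge N}S^np^D(t,x,D)\le \sum_{n\ge N}S^n\one(t,x)$, so the series converges uniformly on $(0,T)\times D$ and the limit is continuous in $x$ provided each partial sum is. Continuity of the partial sums reduces inductively to continuity of $x\mapsto Sg(t,x,A)$ for bounded $g$ continuous in its spatial variable, which is the Vitali/dominated-convergence step mentioned above. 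Assembling these pieces — geometric tail control from Corollary~\ref{c.ucSn}, termwise continuity from the structure of $\varphi$ and Lemma~\ref{l.pd}, and the $t\to0$ behaviour from Lemma~\ref{l.uniformstopping} — yields both assertions.
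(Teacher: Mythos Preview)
Your argument for the $C_b$-Feller convergence at $t=0$ is essentially the paper's: decompose $K_tf=P^D_tf+Sk(t,\cdot)f$, use Lemma~\ref{l.pd} for the first term, and bound the second by $\|f\|_\infty\,\Pb^x(\tau_D\le t)$. (Minor point: Lemma~\ref{l.uniformstopping} as stated gives a positive \emph{lower} bound on $\Pb^x(\tau_D>t)$, not the locally uniform vanishing of $\Pb^x(\tau_D\le t)$ as $t\to 0$; the latter follows from the same comparison $\Pb^x(\tau_D\le t)\le\Pb^0(\tau_{B(0,r)}\le t)$ used in that proof, or from \cite[Lemma~2]{chung86} as the paper cites.)

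For the strong Feller property your route is correct but differs from the paper's. You propose either (a) a Vitali/uniform-integrability argument applied directly to $Sk(t,x,\cdot)$, using that $\int_0^t\varphi(s,x,D)\,ds=\Pb^x(\tau_D<t)$ is continuous in $x$, or (b) termwise continuity of $\sum_nS^np^D$ plus the geometric tail bound from Corollary~\ref{c.ucSn}. Both work; (a) already suffices and the series analysis is not needed. The paper instead uses a short semigroup trick: for $0<s<t$ write $K_tf=K_s g_s$ with $g_s:=K_{t-s}f\in B_b(D)$, apply \eqref{e.pf} at time $s$ to get $|K_tf-P^D_sg_s|\le\|f\|_\infty\,\Pb^x(\tau_D\le s)$, and let $s\to 0$. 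Since each $P^D_sg_s$ is continuous (Lemma~\ref{l.pd}) and the right-hand side vanishes locally uniformly, $K_tf$ is a local uniform limit of continuous functions. This avoids any Vitali argument and any appeal to Corollary~\ref{c.ucSn}; your approach is more hands-on and makes explicit where the uniform integrability lives, at the cost of a longer verification.
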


\begin{proof}
Let $f\in B_b(D)$ and $x\in D$. From Equation \eqref{e.pf}, we obtain
\begin{equation}\label{e.perturb}
(\mk{K}(t)f)(x) = (\mk{P_D(t)}f)(x) + \int_D Sk(t,x,y) f(y)\dx y.
\end{equation}
Let us consider the second term 
on the right hand side of \eqref{e.perturb}. 
We have
\begin{align}
\Big|\int_D Sk(t,x,y) f(y)\, dy \Big| & \leq \int_0^t \dx s \int_D \dx w \, \varphi (s,x,w) \mk{k(t-s, w, D)} \|f\|_\infty\notag\\
& = \|f\|_\infty \int_0^t \dx s \int_D\dx w \, \varphi (s,x,w) = \|f\|_\infty \mpr^x(\tau_D \leq t),\label{e.stopest}
\end{align}
where the last equality uses Lemma \ref{l.phi}(b) and Equation \eqref{e.tfsp2}. By \cite[Lemma 2]{chung86}, the latter converges to $0$ as $t\to 0$, uniformly on compact subsets of $D$.

Let now $f\in C_b(D)$. We have seen that the integral in \eqref{e.perturb} converges locally uniformly to $0$ as $t\to 0$. By Lemma \ref{l.pd}, $\mk{P_D(t)}f$ converges locally uniformly to $f$. Thus, \eqref{e.perturb} yields that $\mk{K(t)}f \to f$ locally uniformly as $t\to 0$.\smallskip

Let us now prove the strong Feller property. To that end, fix $t>0$, $x\in D$ and $f\in B_b(D)$. Note that for
$s \in (0,t)$ we have $\mk{K(t)}f = \mk{K(s)K(t-s)}f$. We set $g_s \coloneqq \mk{K(t-s)}f$. By \eqref{e.perturb} (with $f$ replaced by $g_s$ and $t$ replaced by $s$) and \eqref{e.stopest},
\[
|\mk{K(t)}f(x) -\mk{P_D(s)} g_s(x)| \leq \|g_s\|_\infty \mpr^x(\tau_D\leq s) \leq \|f\|_\infty \mpr^x(\tau_D \leq s).
\]
The latter converges locally uniformly to $0$ as $s\to 0$, so $\mk{P_D(s)} g_s$ converges locally uniformly to $\mk{K(t)} f$. Since $\mk{P_D}$ has the strong Feller property (Lemma \ref{l.pd}), the functions $\mk{P_D(s)}g_s$ are continuous. But then so is $\mk{K(t)}f$.
\end{proof}

We can now characterize the $C_b$-generator $\cA$ of the semigroup $\mk{K}$. In the following theorem, we use the notation $u_\mu$ introduced in \eqref{eq.fmu}.

\begin{thm}\label{t.generator}
For $u, f \in C_b(D)$, the following are equivalent:
\begin{enumerate}
[{\rm (i)}]
\item $u\in D(\cA)$ and $\cA u = f$.
\item 
$u\in C_\mu (D)$ and
\[
f(x) = \lim_{t\to 0} \frac{\mk{P(t)} u_\mu (x) - u_\mu (x)}{t}, \qquad x\in D.
\]
\item 
$u\in C_\mu (D)$ and
\[
f(x) = \lim_{\eps \to 0^+} \int_{\{|y-x|>\eps\}} \big[ u_\mu (y) - u_\mu (x)\big]\nu (x,y)\dx y, \qquad x\in D.
\]
\item 
$u\in C_\mu (D)$ and, with $\gamma$ given by \eqref{e.kgKt},
\[
f(x) = \lim_{\eps \to 0^+} \int_{\{|y-x|>\eps\}\cap D} (u(y)-u(x))\gamma(x,\dx y), \qquad x\in D.
\]
\end{enumerate}
\end{thm}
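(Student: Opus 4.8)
The plan is to establish the chain of equivalences (i) $\Leftrightarrow$ (ii) $\Leftrightarrow$ (iii) $\Leftrightarrow$ (iv), reusing as much of the machinery already built for the killed semigroup as possible. The conceptual point is that $\cA$ should act on $u$ exactly like $\Delta^{\alpha/2}$ acts on the $\mu$-extension $u_\mu$, and that this in turn can be rewritten intrinsically on $D$ using the reflected kernel $\gamma$.

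First I would prove (i) $\Leftrightarrow$ (ii). By Theorem~\ref{t.closurerange} the domain $D(\cA)$ (the range of $R_\lambda$) has closure $C_\mu(D)$, and a function $u\in D(\cA)$ in particular lies in $C_\mu(D)$; conversely, membership of $u$ in $C_\mu(D)$ is the natural constraint to impose. The link to the unkilled semigroup $P=P^{\R^d}$ comes through the Duhamel formula \eqref{e.perturb}: writing $(K_tu)(x)=(P^D_t u)(x)+\int_D Sk(t,x,y)u(y)\dx y$ and comparing with the corresponding decomposition for $P_t u_\mu$, one finds that the difference quotient $t^{-1}\big(P_t u_\mu(x)-u_\mu(x)\big)$ differs from $t^{-1}\big(K_t u(x)-u(x)\big)$ by terms controlled by $t^{-1}\expect^x[\,\cdot\,;\tau_D\le t]$ that vanish as $t\to 0$, using the strong-Markov decomposition at $\tau_D$ together with the tightness of $\mu$ exactly as in Proposition~\ref{p.killedgenerator}. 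Concretely: on the event $\tau_D\le t$, after the first exit the process $Y$ under $P_t u_\mu$ sees $u_\mu(Y_{\tau_D})=\mu(Y_{\tau_D},u)$, while $k$ restarts at a $\mu$-distributed point in $D$; the two contributions match to leading order because $P_s^D(\,\cdot\,,\mu\text{-restart})$ and $\mu(\cdot,u)$ agree in the limit $s\to 0$, and the remaining error is $O(\expect^x[1\wedge\,\cdot\,;\tau_D\le t])=o(t)$ locally uniformly by Lemma~\ref{l.uniformstopping} / Lemma~\ref{l.stopestimate}. This is the step I expect to be the main obstacle: bookkeeping the reflection contribution carefully enough to see that it is $o(t)$ pointwise on $D$, and handling the uniform-boundedness side condition $\sup_{t\in(0,1)}\|t^{-1}(K_tu-u)\|_\infty<\infty$ that is implicit in the $C_b$-generator characterization of Theorem~\ref{t.generatorchar}. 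I would route this through the full-generator / martingale-problem formulation (as in the (i) $\Rightarrow$ (iii) argument of Proposition~\ref{p.killedgenerator}), producing a martingale $u(X_t)-u(x)-\int_0^t f(X_s)\dx s$ for the stopped-at-exit picture, then localizing to small balls $B(x,r)\Subset D$ where the reflection is invisible and $\tau_{B(x,r)}\wedge\tau_D=\tau_{B(x,r)}$.

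The equivalence (ii) $\Leftrightarrow$ (iii) is then purely a statement about the pointwise fractional Laplacian of the bounded continuous function $u_\mu$ on $\R^d$: by \cite[Lemmas 3.3 and 3.4]{MR3613319}, for $u_\mu\in C_b(\R^d)$ and $x\in D$, the limit $t^{-1}(P_t u_\mu(x)-u_\mu(x))\to f(x)$ holds if and only if the Dynkin operator $\mathscr{D}u_\mu(x)=f(x)$, if and only if the principal-value integral $\lim_{\eps\to0^+}\int_{\{|y-x|>\eps\}}[u_\mu(y)-u_\mu(x)]\nu(x,y)\dx y=f(x)$; this is exactly the reasoning already used in Proposition~\ref{p.killedgenerator}(i)$\Leftrightarrow$(iii) and Lemma~\ref{l.harmonic}, applied with $\tilde u$ replaced by $u_\mu$. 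Finally, (iii) $\Leftrightarrow$ (iv) is an elementary rewriting using \eqref{e.kgKt}: split the integral in (iii) into the part over $\{|y-x|>\eps\}\cap D$ and the part over $D^c$. On $D^c$ we have $u_\mu(y)=\mu(y,u)$, $u_\mu(x)=u(x)$, and $\int_{D^c}[\mu(z,u)-u(x)]\nu(x,z)\dx z=\int_{D^c}\nu(x,z)\int_D[u(w)-u(x)]\mu(z,\dx w)$ is an absolutely convergent (finite) integral because $\nu(x,D^c)<\infty$ for $x\in D$ and $u$ is bounded; adding this to $\int_{(\{|y-x|>\eps\}\cap D)}[u(y)-u(x)]\nu(x,\dy)$ reconstitutes precisely $\int_{\{|y-x|>\eps\}\cap D}(u(y)-u(x))\gamma(x,\dx y)$ in the limit, by the very definition of $\gamma$ in \eqref{e.kgKt}. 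Thus all four conditions coincide, and (iv) exhibits $\cA$ as the expected nonlocal operator with kernel $\gamma$ together with the nonlocal boundary condition $u\in C_\mu(D)$, i.e.\ $u_\mu$ continuous across $\partial D$.
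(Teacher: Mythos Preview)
Your handling of (ii) $\Leftrightarrow$ (iii) and (iii) $\Leftrightarrow$ (iv) matches the paper. For (i) $\Leftrightarrow$ (ii), however, the paper takes a genuinely different route: rather than comparing $K_t u$ with $P_t u_\mu$ directly, it uses the resolvent identity $R_\lambda = R_\lambda^D + \Phi_\lambda R_\lambda$ of Lemma~\ref{l.resolvents} to rewrite (i) as the statement that $u - \Phi_\lambda u \in D(\cA^D)$ with $\cA^D(u - \Phi_\lambda u) = f - \lambda\Phi_\lambda u$. For $u \in C_\mu(D)$ one has $u - \Phi_\lambda u \in C_0(D)$ and the decomposition $u_\mu = \widetilde{u - \Phi_\lambda u} + h$, where $h$ is the continuous extension of $\Phi_\lambda u$ with $h = \mu u$ on $D^c$; the paper then simply adds Proposition~\ref{p.killedgenerator}(ii) applied to $u - \Phi_\lambda u$ and the identity $\lim_{t\to 0} t^{-1}(P_t h - h) = \lambda\Phi_\lambda u$ coming from Lemma~\ref{l.harmonic} together with Kwa\'snicki's lemmas. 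No reflection bookkeeping on the Duhamel side is needed at all.

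Your direct semigroup comparison is plausible for (i) $\Rightarrow$ (ii), but there is a real gap for (ii) $\Rightarrow$ (i). Even granting that $t^{-1}\bigl(K_t u(x) - P_t u_\mu(x)\bigr) \to 0$ for each $x \in D$, condition~(ii) yields only pointwise convergence of $t^{-1}(P_t u_\mu - u_\mu)$, hence only pointwise convergence of $t^{-1}(K_t u - u)$; to conclude $u \in D(\cA)$ from Theorem~\ref{t.generatorchar} you still need the uniform bound $\sup_{t\in(0,1)}\|t^{-1}(K_t u - u)\|_\infty < \infty$, and your comparison does not supply it (the controlling factor $t^{-1}\Pb^x(\tau_D\le t)$ blows up as $x\to \partial D$). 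The proposed remedy via the ``full-generator / martingale-problem formulation'' is circular: Theorem~\ref{t.generatorchar}(ii) already presupposes $u \in D(\cA)$, which is what you are trying to prove. The paper sidesteps this entirely by delegating the implication to Proposition~\ref{p.killedgenerator}, where the analogous step (ii) $\Rightarrow$ (i) for $\cA^D$ was established through the surjectivity of $\lambda - \cA^D$ rather than difference quotients. One could repair your route by inserting a uniqueness argument (set $w = R_\lambda(\lambda u - f)$, show $u - w \in C_\mu(D)$ satisfies $\Delta^{\alpha/2}(u-w)_\mu = \lambda(u-w)$ pointwise on $D$, and conclude $u = w$ via a maximum principle in the spirit of Lemma~\ref{l.i}), but that is not what you wrote.
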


\begin{proof}
By Lemma \ref{l.resolvents}, (i) is equivalent with $u = \mk{[R(\lambda)]}(\lambda u - f) = \mk{R_D(\lambda)}(\lambda u - f) +\mk{\Phi(\lambda)} u$. 
Thus (i) is equivalent to $u-\mk{\Phi(\lambda)} u =
\mk{R_D(\lambda)}(\lambda u - f)$, which, in turn, is equivalent to
$u-\mk{\Phi(\lambda)} u \in D(\cA^D)$ and $\cA^D (u-\mk{\Phi(\lambda)} u ) = 
f - \lambda \mk{\Phi(\lambda)} u$.
By Theorem \ref{t.closurerange}, $D(\cA) \subset C_\mu (D)$, so the
equivalence of (i) and (ii) follows from Proposition \ref{p.killedgenerator} and Lemma \ref{l.harmonic}, 
applied with $g= \mu f$.

To prove the implication (i) $\Rightarrow$ (iii), we note that, by Proposition \ref{p.killedgenerator} and (the proof of) Lemma \ref{l.harmonic},
the function $\mk{R_D(\lambda)}(\lambda u - f) +\mk{\Phi(\lambda)} u$ belongs to the domain of the Dynkin operator and $\mathscr{D} \big[\mk{R_D(\lambda)} (\lambda u - f) +\mk{\Phi(\lambda)} u\big] 
=f-\lambda \mk{\Phi(\lambda)} u
+ \lambda \mk{\Phi(\lambda)} u = f$ on all of $D$. At this point,
\cite[Lemma 3.3]{MR3613319} yields (iii). 

The  implication (iii) $\Rightarrow$ (ii) follows once again from \cite[Lemma 3.4]{MR3613319}, whereas (iv) is merely a reformulation of (iii).
\end{proof}

\begin{rem}\label{r.bc}
The condition $u\in C_\mu (D)$ that appears in the statements (ii) -- (iv) of Theorem \ref{t.generator} can be seen as a \emph{boundary condition} that a function $u\in C_b(D)$ necessarily satisfies if it belongs to $D(\cA)$. The condition is equivalent to
\[
\lim_{D\ni x\to z} u(x) = \int_D u(y) \mu (z, \dx y), \qquad z\in \partial D.
\]
\end{rem}

\begin{cor}\label{c.se}
Let $\gamma$ be given by \eqref{e.kgKt} and fix $\lambda >0$. Then for every $f\in C_b(D)$ there exists a unique function 
$u\in C_\mu(D)$ satisfying
\[
\lambda u(x) - \lim_{\eps\to 0^+}\int_{\{|y-x|>\eps\}\cap D} (u(y) - u(x))\gamma(x, \dx y) = f(x),\quad x\in D.
\]
\end{cor}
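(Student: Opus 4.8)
The plan is to recognize the displayed equation as the abstract resolvent equation $(\lambda-\cA)u=f$ for the $C_b$-generator $\cA$ of the semigroup $(K_t)$, and then to read off existence and uniqueness from Theorem~\ref{t.generator}, Theorem~\ref{t.closurerange} and Lemma~\ref{l.resolvents}.

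First I would record the following reformulation: for $u\in C_\mu(D)$ and $f\in C_b(D)$, the function $u$ solves the displayed equation if and only if $u\in D(\cA)$ and $\cA u=\lambda u-f$. Writing $\tilde f\coloneqq\lambda u-f$, note that $\tilde f\in C_b(D)$ because $C_\mu(D)\subset C_b(D)$, and that the displayed equation says precisely that the limit appearing in Theorem~\ref{t.generator}(iv) exists and equals $\tilde f(x)$ for every $x\in D$. Hence the equivalence (i)$\Leftrightarrow$(iv) of Theorem~\ref{t.generator}, applied to the pair $(u,\tilde f)$, yields the reformulation in both directions; in the direction ``equation $\Rightarrow$ abstract equation'' one additionally uses that $D(\cA)\subset C_\mu(D)$, which is part of Theorem~\ref{t.closurerange}.

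Given this reformulation, for existence I would set $u\coloneqq R_\lambda f$. Since $f\in C_b(D)\subset B_b(D)$, the first part of the proof of Theorem~\ref{t.closurerange} shows $u\in C_\mu(D)$, and by the definition of the $C_b$-generator (equivalently, by the characterization ``$u\in D(\cA)$ and $\cA u=g\iff u=R_\lambda(\lambda u-g)$'' used in the proof of Theorem~\ref{t.generator}, together with Lemma~\ref{l.resolvents}) one gets $u\in D(\cA)$ and $(\lambda-\cA)u=f$, i.e.\ $\cA u=\lambda u-f$. By the reformulation, this $u\in C_\mu(D)$ solves the displayed equation. For uniqueness, let $u_1,u_2\in C_\mu(D)$ both solve it; then $u_1,u_2\in D(\cA)$ with $\cA u_i=\lambda u_i-f$, so by linearity of the operator $\cA$ the difference $w\coloneqq u_1-u_2\in D(\cA)$ satisfies $\cA w=\lambda w$, and the characterization ``$\cA w=g\iff w=R_\lambda(\lambda w-g)$'' with $g=\lambda w$ forces $w=R_\lambda 0=0$.

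There is no essential obstacle: the corollary is a bookkeeping consequence of the generator theory developed above. The only points requiring care are (a) verifying $R_\lambda f\in C_\mu(D)$ for $f\in C_b(D)$, which is exactly the content of the first half of the proof of Theorem~\ref{t.closurerange}, and (b) checking the $C_b(D)$-membership hypotheses needed to invoke Theorem~\ref{t.generator}, which hold automatically since $C_\mu(D)\subset C_b(D)$ and $f\in C_b(D)$, so that $\lambda u-f\in C_b(D)$ as well.
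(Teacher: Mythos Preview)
Your proposal is correct and follows essentially the same approach as the paper: recognize the equation as $(\lambda-\cA)u=f$, use that $\lambda>0$ lies in the resolvent set of the $C_b$-generator $\cA$ to get existence and uniqueness in $D(\cA)$, and invoke Theorem~\ref{t.generator}(iv) to translate between the abstract equation and the displayed integro-differential one. The paper's version is terser (it simply cites that $(0,\infty)$ is in the resolvent set), while you spell out the reformulation and the $R_\lambda$-argument explicitly; the remark that $D(\cA)\subset C_\mu(D)$ is not actually needed in the direction ``equation $\Rightarrow$ abstract equation'' since $u\in C_\mu(D)$ is already assumed there, but this redundancy does no harm.
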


\begin{proof}
Since $\cA$ is the generator of a $C_b$-Feller semigroup, $(0,\infty)$ belongs to the resolvent set of $\cA$. Thus, for every $\lambda >0$ and
$f\in C_b(D)$ the \mk{e}quation $\lambda u - \cA u = f$ has a unique solution $u\kb{\coloneqq R(\lambda)}\in D(\cA)$. Now the claim follows from the characterization of $\cA$ in Theorem \ref{t.generator}(iv).
\end{proof}

\section{
Asymptotic behavior \kb{of the semigroup}}\label{s.im}

In order to establish the existence of an invariant measure, we employ the lower-bound technique of Lasota  \cite[Theorem~6.1]{MR1452617}. Here is the first step.

\begin{lem}\label{l.lowerbound}
Fix $t>0$ \mk{and let $H$ be as in Hypothesis \ref{hyp1}(ii)}.
\begin{enumerate}
[(a)]
\item \mk{There is $\delta >0$} such that $\mk{k(t,x,y)} \geq \delta$ for all $x\in D$, $y\in H$.
\item \mk{There is $\delta >0$} such that $\mk{k(s,x,H)} \geq \delta\mk{|H|}$ for all $x\in D$ and $s\geq t$.
\item There is $\eps>0$ such that $\int\limits_D \!|\mk{k(t, x_1,y) \!-\! k(t, x_2,y)}|\dy\! \leq \!2-\eps$ for all $x_1,x_2\in D$.
\end{enumerate}
\end{lem}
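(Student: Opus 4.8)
The plan is to deduce all three parts from the perturbation series \eqref{e.dk}, the Chapman--Kolmogorov equations (Lemma \ref{l.ck}), and the lower bound for $S p^D(t,x,D)$ established in the proof of Theorem \ref{t.m}. For part (a), I would first note that $k_t(x,y)\ge Sp^D(t,x,y)$, since $k = \sum_n S^n p^D \ge p^D + Sp^D$ and all terms are nonnegative. By Hypothesis \ref{hyp1} choose $H_0\Subset D$ with $\mu(z,H_0)\ge\frac12$ for all $z\in D^c$; enlarge $H_0$ a little to a set $H\Subset D$ whose interior contains $H_0$. Writing out $Sp^D(t,x,y)$ and restricting the inner integral in \eqref{eq.phi} to $w\in H_0$ and the $s$-integral to, say, $s\in(0,t/2)$, we get
\[
k_t(x,y)\ \ge\ \int_0^{t/2}\!\!\dx s\int_D\dx v\int_{D^c}\dz\, p^D_s(x,v)\,\nu(v,z)\,\mu(z,\dx w)\,p^D_{t-s}(w,y),
\]
and on the event $w\in H_0$, $s\le t/2$ we may bound $p^D_{t-s}(w,y)$ below by $c:=\inf\{p^D_r(w,y): r\in[t/2,t],\ w\in H_0,\ y\in H\}>0$, using joint continuity and positivity of $p^D$ on $(0,\infty)\times D\times D$ together with compactness of $[t/2,t]\times \overline{H_0}\times \overline{H}$. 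What remains of the integral is then $\frac{c}{2}\int_0^{t/2}\dx s\int_D\dx v\int_{D^c}\dz\, p^D_s(x,v)\nu(v,z) = \frac{c}{2}\,\Pb^x(\tau_D< t/2)$, which by Lemma \ref{l.stopestimate}-type reasoning (or directly by Lemma \ref{l.uniformstopping} applied to the complement) must be bounded below by a positive constant uniformly in $x\in D$; in fact the inequality chain in the proof of Theorem \ref{t.m} shows $Sp^D(t,x,D)\ge\frac{\eta}{2}\Pb^x(\tau_D<t)$ and $\Pb^x(\tau_D<t)\ge \Pb^x(\tau_D<t/2)$, so one gets a clean uniform lower bound. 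Setting $\delta$ to be the resulting constant proves (a).

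Part (b) follows from (a) and Chapman--Kolmogorov. Fix the $t>0$ and the $H,\delta$ from (a). For $s\ge t$, Lemma \ref{l.ck} and Theorem \ref{t.m} give
\[
k_s(x,H)=\int_D k_{s-t}(x,\dx y)\,k_t(y,H)\ \ge\ \delta\int_D k_{s-t}(x,\dx y)\,\one_{?}\,,
\]
wait --- more carefully: apply Chapman--Kolmogorov in the other order, $k_s(x,\cdot)=\int_D k_{s-t}(x,\dx y)k_t(y,\cdot)$, so $k_s(x,H)=\int_D k_{s-t}(x,\dx y)\,k_t(y,H)\ge \delta\,|H|\int_D k_{s-t}(x,\dx y)=\delta|H|$, using $k_t(y,H)=\int_H k_t(y,z)\dz\ge\delta|H|$ from (a) and $k_{s-t}(x,D)=1$. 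Renaming $\delta|H|$ as $\delta$ gives (b).

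Part (c) is the standard Doeblin-type step: from (a), for any $x_1,x_2\in D$,
\[
\int_D |k_t(x_1,y)-k_t(x_2,y)|\dy \le \int_D\big(k_t(x_1,y)+k_t(x_2,y)-2\min\{k_t(x_1,y),k_t(x_2,y)\}\big)\dy
= 2 - 2\int_D \min\{k_t(x_1,y),k_t(x_2,y)\}\dy,
\]
and on $H$ the integrand $\min\{\cdot,\cdot\}$ is $\ge\delta$, so the last integral is $\ge \delta\,|H|=:\eps/2$, giving the bound $2-\eps$. The only genuine work is in part (a), and within it the main obstacle is justifying the \emph{uniform-in-$x$} positive lower bound for $\Pb^x(\tau_D<t/2)$ over all $x\in D$ (not just $x$ in a compact subset): for $x$ near $\partial D$ the process exits almost immediately, which only helps, but one wants this made quantitative and uniform. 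This is exactly what the inequality chain \eqref{eq.Kest}--\eqref{e.lbpSp} in the proof of Theorem \ref{t.m} already delivers, since there $Sp^D(t,x,D)\ge\frac{\eta}{2}\Pb^x(\tau_D<t)$ and separately $\Pb^x(\tau_D<t)=1-\Pb^x(\tau_D>t)$ where $\Pb^x(\tau_D>t)$ is controlled; combined with $p^D_t(x,D)+Sp^D(t,x,D)\ge\eta/2$ one concludes either $\Pb^x(\tau_D>t)$ is bounded below (handled by Lemma \ref{l.uniformstopping} after localizing to a compact set) or $\Pb^x(\tau_D<t)$ is, and in the latter regime the argument above applies directly. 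So the whole lemma reduces to carefully threading these already-established uniform estimates through the first term $Sp^D$ of the series \eqref{e.dk}.
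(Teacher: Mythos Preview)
Your overall strategy for all three parts matches the paper's proof: bound $k_t\ge Sp^D$, restrict to $w\in H$ and $s\in(0,t/2)$, use joint continuity and positivity of $p^D$ on a compact box to extract a constant $c$, and reduce to a uniform lower bound on $\Pb^x(\tau_D<t/2)$. Parts (b) and (c) are correct and essentially identical to the paper's arguments (the paper writes (c) via the splitting $|a-b|=|(a-\delta)-(b-\delta)|$ on $H$, which is equivalent to your $\min$-identity); just remember to ensure $|H|>0$, which the paper does explicitly by enlarging $H$.

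The one genuine gap is in part (a), at precisely the point you flag as the ``main obstacle'': the uniform-in-$x$ lower bound for $\Pb^x(\tau_D<t/2)$. Your proposed resolution via the dichotomy from \eqref{e.lbpSp} does not work. That inequality gives $p^D_t(x,D)+Sp^D(t,x,D)\ge\eta/2$, a bound on a \emph{sum}; it does not force $\Pb^x(\tau_D<t/2)$ itself to be bounded below. In the branch where $\Pb^x(\tau_D>t)$ is large you have no control on $Sp^D(t,x,y)$ (indeed $p^D_t(x,y)\to 0$ as $x\to\partial D$, so you cannot switch to the $p^D$ term either), and Lemma~\ref{l.uniformstopping} goes the wrong way here: it bounds $\Pb^x(\tau_D>t)$ from below on compacts, not $\Pb^x(\tau_D<t/2)$. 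The pointwise dichotomy therefore does not assemble into a uniform bound.

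The paper closes this gap cleanly and directly: the map $x\mapsto\Pb^x(\tau_D<t/2)=\int_0^{t/2}\!\dx s\int_D\!\dx v\int_{D^c}\!\dz\,p^D_s(x,v)\nu(v,z)$ is lower semicontinuous on $D$ by Fatou (the integrand is positive and continuous in $x$), and tends to $1$ as $x\to\partial D$ by regularity of the Lipschitz boundary. Hence the infimum over $D$ is attained at some interior point, where the value is strictly positive because the integrand is strictly positive. That single observation replaces your dichotomy and finishes (a).
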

\begin{proof}
(a)  Since $\mk{p_D}$ is continuous and positive, by compactness there is 
a constant $c=c(D,H,t,\alpha)>0$ such that $\mk{p_D}(r, w, y) \geq c$ for all $r\in [t/2, t]$ and $w,y\in H$.
Then, for $x \in D$, $y\in H$, we get
\begin{align}
\mk{k(t,x,y)}&\ge S \mk{p_D}(t,x,\mk{y})\notag\\
&\ge \int_0^{t/2}\dx s \int_D \dx v \int_{D^c} \dz \int_H \, p^D_s(x,v) \nu(v,z) \mu(z, \dx w) \mk{p_D(t-s,w,y)}\notag\\
&\ge \mk{c\vartheta}\int_0^{t/2}\dx s \int_D \dx v \int_{D^c} \dz \, \mk{p_D(s,x,v)} \nu(v,z)= \mk{c\vartheta}\Pb^x ( \tau_D < t/2)\label{eq.stopprob}.
\end{align}
Note that $x\mapsto \mk{p_D(s,x,v)}\nu(v,z)$ is strictly positive and continuous for almost all triplets $(s, v, z)$. Fatou's lemma implies that
the function $x\mapsto \Pb^x(\tau_D < t/2)$ is lower semicontinous. \kb{Because of the regularity \eqref{e.reg}, the}
function tends to $1$ as $x$ approaches the boundary (see \cite[Theorem 1.23]{MR1329992}), so at some point of $D$ it attains its minimum, say $C>0$.
\mk{Thus} $\mk{k(t, x,y)}\geq cC\mk{\vartheta}=:\delta$ for all $x\in D$, $y\in H$.

(b) 
By the Chapman--Kolmogorov equations in Lemma \ref{l.ck}, for $s> t$ we get
\[
\mk{k(s,x,H)} = \int_D \mk{k(s-t, x,y) k(t, x,H)}\dy \geq \delta\mk{|H|}.
\]

(c) Pick again $\delta$ as in (a). By making $H$ larger, we may assume that $|H|>0$. Then, for all $x_1,x_2\in D$ we have
\begin{align*}
&\int_D|\mk{k(t, x_1,y)-k(t, x_2,y)}|\dy\\&=\int_{D\setminus H}|\mk{k(t, x_1,y)-k(t, x_2,y)}|\dy
+\int_{H}|\mk{k(t,x_1,y)}-\delta-(\mk{k(t,x_2,y)}-\delta)|\dy\\
&\le \int_{D\setminus H}\mk{k(t, x_1,y)}\dy+\int_{D\setminus H}\mk{k(t,x_2,y})\dy\\
&\quad +\int_{H}(\mk{k(t,x_1,y)}-\delta)\dy+\int_{H}(\mk{k(t,x_1,y)}-\delta)\dy=2-2\delta|H|,
\end{align*}
so we can take $\eps=2\delta|H|>0$.
\end{proof}

We next consider the adjoint of the operators $\mk{K(t)}$, $t>0$. For a finite measure $\kappa$ on $D$ we put
\[
\mk{K(t)}^* \kappa(A)\coloneqq \int_D \mk{k(t, x,A)}\kappa(\dx x),
\qquad t\kb{>} 0, \quad A\subset D.
\]
Then $\mk{K(t)}^*\kappa$ is again a measure on $D$ and, by Tonelli's theorem and Theorem \ref{t.m},
$\mk{K(t)}^* \kappa(D)=\kappa(D)$. Moreover, 
$\mk{K(t)}^*\kappa$ is absolutely continuous
with respect to Lebesgue measure, 
so we may think of $\mk{K(t)}^*$ as operating on $L^1(D)$. 
With this in mind, the operators $\mk{K(t)}^*$ are Markov operators
in the sense of Lasota and Mackey \cite{MR1244104}, 
Lasota and York \cite{MR1265226} and Komorowski \cite{MR1162571}. 
We call a probability measure $\kappa$ a \emph{stationary distribution} if $\mk{K(t)}^*\kappa = \kappa$ for all $t>0$. 
Then, again,
$\mk{K(t)}^*\kappa$ is absolutely continuous with respect to the Lebesgue measure so that any stationary distribution must have a density, called
\emph{stationary density}.

\begin{thm}\label{t.inv}
There is a unique stationary distribution $\kappa$. Moreover, there exist $M, \omega\in (0,\infty)$ such that for every probability measure
$\nu$ on $D$,
\[
\|\mk{K(t)}^*\nu - \kappa \|_\tv \leq Me^{-\omega t}\kb{,\quad t>0}.
\]
\end{thm}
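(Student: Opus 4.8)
The plan is to apply Lasota's lower-bound technique (\cite[Theorem~6.1]{MR1452617}) to the family of Markov operators $(K_t^*)$ on $L^1(D)$, and then to upgrade the resulting convergence (in $L^1$, equivalently in total variation on absolutely continuous measures) to exponential speed. Fix any $t_0>0$. The heart of the argument is that the three assertions of Lemma~\ref{l.lowerbound} furnish a \emph{uniform lower-bound function}: by part (a) there are $H\Subset D$ and $\delta>0$ with $k_{t_0}(x,y)\ge\delta$ for all $x\in D$, $y\in H$, i.e.\ $K_{t_0}^*\kappa \ge \delta\,|H|\,(\one_H/|H|)\cdot\kappa(D)$ for every probability density $\kappa$, so $h\coloneqq \delta\one_H$ is a nontrivial function dominated by $K_{t_0}^*\kappa$ for \emph{all} densities $\kappa$ simultaneously. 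This is precisely the hypothesis of the lower-bound theorem for the discrete-time Markov operator $K_{t_0}^*$; it yields a unique stationary density $\kappa$ for $K_{t_0}^*$ and asymptotic stability: $\|K_{nt_0}^*\nu-\kappa\|_1\to 0$ as $n\to\infty$ for every density $\nu$.

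Next I would promote this to the continuous-time statement. Uniqueness: if $K_t^*\kappa'=\kappa'$ for all $t>0$, then in particular $\kappa'$ is $K_{t_0}^*$-stationary, hence $\kappa'=\kappa$; conversely, the $K_{t_0}^*$-stationary $\kappa$ is also $K_s^*$-invariant for every $s>0$, because $K_s^*\kappa$ is again $K_{t_0}^*$-stationary (the operators commute by the semigroup law, Lemma~\ref{l.ck}), hence equals $\kappa$ by uniqueness. For the rate, the standard device is to extract a genuine contraction: using parts (a)--(c) of Lemma~\ref{l.lowerbound} one shows there is $\eps\in(0,1)$ with
\[
\|K_{t_0}^*\nu_1-K_{t_0}^*\nu_2\|_1 \le (1-\eps)\,\|\nu_1-\nu_2\|_1
\]
for all densities $\nu_1,\nu_2$; indeed $k_{t_0}(x_1,\cdot)\wedge k_{t_0}(x_2,\cdot)\ge \delta\one_H$ pointwise, so the overlap of the two probability measures $k_{t_0}(x_i,\cdot)$ is at least $\delta|H|$, and part (c) makes this quantitative and uniform in $x_1,x_2$. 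Integrating against $\nu_1,\nu_2$ and using that $K_{t_0}^*$ is a contraction on $L^1$ with this built-in uniform overlap gives the displayed strict contraction with $\eps$ from Lemma~\ref{l.lowerbound}(c). Iterating, $\|K_{nt_0}^*\nu-\kappa\|_1=\|K_{nt_0}^*\nu-K_{nt_0}^*\kappa\|_1\le (1-\eps)^n\|\nu-\kappa\|_1\le 2(1-\eps)^n$.

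Finally, for arbitrary $t>0$ write $t=nt_0+r$ with $n\in\N_0$ and $r\in[0,t_0)$, and use $\|K_t^*\nu-\kappa\|_1 = \|K_{nt_0}^*(K_r^*\nu)-\kappa\|_1 \le 2(1-\eps)^n$ together with $n\ge t/t_0 - 1$; this gives the bound with $\omega = -t_0^{-1}\log(1-\eps)>0$ and $M = 2(1-\eps)^{-1}$, after noting that $\|\mu_1-\mu_2\|_\tv$ for absolutely continuous $\mu_i$ equals the $L^1$-distance of their densities. I expect the main obstacle to be purely bookkeeping: Lasota's theorem is stated for a single Markov operator, so one must be careful that the lower-bound function $\delta\one_H$ in Lemma~\ref{l.lowerbound}(a) is independent of the starting point, which it is, and that the stationary density produced for $K_{t_0}^*$ does not depend on the choice of $t_0$ — this is exactly what the commutation-plus-uniqueness argument above secures. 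The quantitative contraction estimate from part (c) is the only genuinely analytic input beyond Lemma~\ref{l.lowerbound}, and it is the standard Doeblin-type coupling bound.
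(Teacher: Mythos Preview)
Your proposal is correct and follows essentially the same route as the paper: both obtain existence and uniqueness from the uniform lower bound in Lemma~\ref{l.lowerbound} (the paper via part (b) and the Lasota/Da~Prato--Zabczyk criterion, you via part (a) directly), derive the exponential rate from the Doeblin-type estimate of Lemma~\ref{l.lowerbound}(c), and then pass from the discrete skeleton $K_{t_0}^*$ to continuous time by writing $t=nt_0+r$ and using that $K_r^*$ is a TV-contraction fixing $\kappa$. The only cosmetic differences are that the paper outsources the contraction step to \cite[Theorem~1.3]{kulik15} whereas you spell out the Dobrushin argument, and that the contraction factor coming from (c) is $1-\eps/2$ rather than $1-\eps$ (the Dobrushin coefficient is $\tfrac12\sup_{x_1,x_2}\|k_{t_0}(x_1,\cdot)-k_{t_0}(x_2,\cdot)\|_1\le 1-\eps/2$); this does not affect the conclusion.
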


\begin{proof}
By Lemma~\ref{l.lowerbound}(b), $\liminf_{T\to \infty} \frac1T\int_0^T \mk{K(t,x,H)}\dt >0$ for $x\in D$. This is the lower bound mentioned at the beginning of the section. By Da Prato and Zabczyk \cite[Remark 3.1.3]{MR1417491}, 
we get a stationary distribution.
Since the proof in not given in \cite[Remark 3.1.3]{MR1417491} we refer the reader to
Lasota and York \cite[Theorem 3.1]{MR1265226}, Lasota \cite[Theorem 6.1]{MR1452617} or 
Komorowski \cite[Theorem 3.1]{MR1162571} for the proofs in the discrete-time case.\smallskip

In view of Lemma \ref{l.lowerbound}(c), \cite[Theorem 1.3]{kulik15} (see also \cite{hairer21}), applied to $P=\mk{K(1)}$ yields the 
existence of constants $c>0$ and $e^{-\omega}=\gamma>0$ 
such that $\|\mk{K(n)}^*\nu - \kappa\|_\tv \leq c\gamma^n$ for all 
$n\in \N$ and probability measures $\nu$ on $D$. 
An arbitrary number $t>0$ can be written as $t=n+r$, where $n\in \N_0$ and $r\in [0,1)$. Then\kb{,}
\begin{align*}
\|\mk{K(t)}^*\nu - \kappa\|_\tv & = \|\mk{K(r)^*K(n)}^*\nu - \mk{K(r)}^*\kappa\|_\tv \leq \|\mk{K(n)}^*\nu - \mk{K(r)}^*\kappa\|_\tv\\
& \leq c e^{-\omega n} \leq ce^{\omega} e^{-\omega t} =: M e^{\omega t}.\qedhere
\end{align*}
\end{proof}

\kb{
\begin{rem}\label{r.iob}
Here are some observations and open problems.
\begin{enumerate}
[(i)]
\item The compactness of $\overline{D}$ is crucial in this section; 
Lemma~\ref{l.lowerbound} and Theorem~\ref{t.inv} easily fail for $D=\Rd$, since then $K(t)=P(t)$. On the other hand, we conjecture that they hold if the boundedness of $D$ is replaced by that of $x\mapsto \expect_x \tau_D$, e.g., 
if $D$ is an (unbounded) right circular cylinder.
\item The repulsion kernel $\mu$ enters the construction of $k(t)$ only through the \textit{return kernel} $V(x,A)\coloneqq \int_{D^c}\nu(x,z)\mu(z,A)\dx z$. We expect results for kernels  $V$ on $D$ satisfying $V(\cdot,D)\le \nu(\cdot,D^c)$ describing general reflections.
\item It is interesting to apply our methods to study semigroups of resurrections or recurrent extensions of Markov processes with scaling, see 
\cite{MR2266714, MR2364226, MR4140082, MR4514832, MR4520527}.
In the same vein, it is important to study the semigroup 
corresponding to the quadratic form and the Neumann problem in 
\cite{MR3651008};  see also \cite{BOGDAN2019, MR4245573} for motivation. In these settings, Hypothesis~\ref{hyp1}(ii) fails.
\end{enumerate}
\end{rem}
}

\appendix

\section{\texorpdfstring{$C_b$-Feller semigroups}{}}\label{a.sg}

By Remark \ref{r.notfeller}, the semigroup $(\mk{K(t), t  > 0})$ is, in general, \emph{not} a Feller semigroup, so in this paper we use a different semigroup concept, namely the notion of $C_b$-Feller semigroup. This can be seen as a special case of the theory of ``semigroups on norming dual pairs'', introduced in \cite{kunze09, kunze11}. As this is not a standard notion, we introduce this concept in this appendix and reformulate the relevant results from \cite{kunze09, kunze11} in our special case.\smallskip

Throughout, we let $E\subset$ be an open \mk{ subset of $\Rd$}, or, more generally, a Polish space. 
A \emph{kernel} on $E$ is a map $k: E\times \cB (E)\to \C$ such that (i) the map $x \mapsto k(x,A)$ is measurable for all $A\in \cB(E)$ (ii) the map $A\mapsto k(x,A)$ defines a measure on $E$ for every $x\in E$ and (iii) we have $\sup_x |k|(x, E) <\infty$, where $|k|(x,\cdot)$ refers to the total variation of $k(x,\cdot)$.

A bounded linear operator $T$ on $C_b(E)$ is called a \emph{kernel operator}, if there exists a kernel $k$ such that
\begin{equation}
\label{eq.kernelrep}
(Tf)(x) = \int_E f(y)\, k(x, \dx y) \qquad f\in C_b(E), \, x\in E.
\end{equation}

As it turns out, being a kernel operator can be characterized by an additional continuity condition with respect to the weak topology 
$\sigma \coloneqq \sigma (C_b(E), \cM(E))$ induced by the space of bounded (complex/signed) measures. We note that for a sequence of functions $(f_n) \subset C_b(E)$, the convergence with respect to $\sigma$ is nothing else than \emph{bp-convergence} (bp is short for bounded, pointwise), which means $\sup_n \|f_n\|_\infty <\infty$ and $f_n\to f$ pointwise. Indeed, that bp-convergence implies $\sigma$-convergence follows from the dominated convergence theorem whereas the converse implication follows easily using the uniform boundedness principle.

\begin{lem}\label{l.kernelop}
Let $T\in \cL (C_b(E))$ be a bounded linear operator. The following are equivalent:
\begin{enumerate}
[(i)]
\item $T$ is a kernel operator;
\item $T$ is $\sigma$-continuous;
\item If $(f_n) \subset C_b(E)$ bp-converges to  $f\in C_b(E)$, then $Tf_n$ bp-converges to $Tf$.
\end{enumerate}
\end{lem}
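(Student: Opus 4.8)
The plan is to prove the equivalence of (i), (ii), (iii) via the cycle (i) $\Rightarrow$ (iii) $\Rightarrow$ (ii) $\Rightarrow$ (i). The easiest implication is (i) $\Rightarrow$ (iii): if $T$ has the kernel representation \eqref{eq.kernelrep} and $(f_n) \subset C_b(E)$ bp-converges to $f$, then for each fixed $x\in E$ the measure $k(x,\cdot)$ is finite, so the dominated convergence theorem gives $(Tf_n)(x) \to (Tf)(x)$; moreover $\|Tf_n\|_\infty \leq \|T\|\,\sup_n\|f_n\|_\infty < \infty$, so $Tf_n$ bp-converges to $Tf$. The implication (iii) $\Rightarrow$ (ii) is essentially a restatement: as noted in the paragraph preceding the lemma, for sequences in $C_b(E)$ the topology $\sigma = \sigma(C_b(E),\cM(E))$ agrees with bp-convergence; since $E$ is Polish, bounded subsets of $\cM(E)$ have the property that $\sigma$-continuity of a bounded operator can be tested on sequences (one can invoke the fact that on norm-bounded sets $\sigma$ is metrizable, or argue via nets reducing to sequences using that $T$ is bounded), so (iii) yields $\sigma$-continuity of $T$.

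The substantive implication is (ii) $\Rightarrow$ (i): from $\sigma$-continuity we must manufacture the kernel $k$. First I would construct $k(x,\cdot)$ pointwise: for fixed $x\in E$, the functional $f \mapsto (Tf)(x)$ is linear and bounded on $C_b(E)$, hence by the Riesz representation theorem (in the form valid for $C_b$ of a Polish space, e.g.\ via the \v{C}ech--Stone compactification or directly using the Daniell approach) there is a bounded finitely additive set function representing it. The $\sigma$-continuity of $T$ upgrades this to countable additivity: if $A_n \downarrow \emptyset$ in $\cB(E)$ one chooses continuous functions squeezed between $\one_{A_{n+1}}$ and $\one_{A_n}$, or more robustly uses that bp-convergence of indicator-type functions is controlled, to force $k(x,A_n)\to 0$; in a Polish space one can also use inner regularity by compact sets and outer regularity by open sets to pin down a genuine Borel measure. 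This gives, for each $x$, a finite signed/complex Borel measure $k(x,\cdot)$ with $(Tf)(x) = \int_E f\, dk(x,\cdot)$ for $f\in C_b(E)$; boundedness of $T$ gives $\sup_x |k|(x,E) \leq \|T\| < \infty$. Measurability of $x\mapsto k(x,A)$ for each $A\in\cB(E)$ is obtained by a monotone-class / Dynkin argument: it holds for $A$ open (since then $\one_A$ is a bp-limit of continuous functions and $x\mapsto (Tf)(x)$ is continuous hence measurable), and the class of $A$ for which it holds is closed under the appropriate operations, hence is all of $\cB(E)$.

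The main obstacle I expect is the upgrade from a finitely additive representation to a countably additive Borel kernel, i.e.\ genuinely exploiting the $\sigma$-continuity rather than mere boundedness of $T$; the delicate point is that $\one_A$ for $A$ merely Borel is not a bp-limit of a sequence from $C_b(E)$ in general, so one cannot directly plug indicators into (iii). The remedy is to work with open sets (where the approximation is clean) together with the regularity of finite Borel measures on Polish spaces, extending from the algebra generated by open sets to all of $\cB(E)$ by a monotone class argument. This is exactly the sort of argument carried out in \cite{kunze09, kunze11} for norming dual pairs, so here one may either cite that directly or reproduce the short version sketched above. Once $k$ is in hand, rechecking that the resulting kernel operator coincides with $T$ on all of $C_b(E)$ is immediate from the construction, completing the cycle.
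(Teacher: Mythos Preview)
Your cycle (i)$\Rightarrow$(iii)$\Rightarrow$(ii)$\Rightarrow$(i) differs from the paper's organisation, which cites \cite[Proposition~3.5]{kunze11} for (i)$\Leftrightarrow$(ii), observes that (ii)$\Rightarrow$(iii) is trivial (since $\sigma$-convergence and bp-convergence of sequences coincide), and proves (iii)$\Rightarrow$(i) directly by applying \cite[Theorem~7.10.1]{bogachev} to the functional $f\mapsto (Tf)(x)$ to obtain a Baire (hence Borel, $E$ being Polish) measure $\nu_x$, with the measurable dependence on $x$ handled as in \cite{kunze11}.

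Your (i)$\Rightarrow$(iii) is correct and your sketch of (ii)$\Rightarrow$(i) is essentially a reconstruction of the cited arguments. The genuine gap is in your step (iii)$\Rightarrow$(ii): the claim that $\sigma$ is metrizable on norm-bounded subsets of $C_b(E)$ is false in general --- already for compact $E=[0,1]$ the dual $\cM([0,1])$ is non-separable, so the weak topology on the unit ball of $C([0,1])$ is not metrizable --- and the alternative ``nets reduce to sequences because $T$ is bounded'' is not a valid argument either. The correct route from (iii) to (ii) is via the standard criterion that a linear map is $\sigma$-continuous iff its adjoint carries $\cM(E)$ into $\cM(E)$; given (iii) and dominated convergence, $T^*\nu$ is bp-sequentially continuous for every $\nu\in\cM(E)$, and then one needs a Daniell--Stone type theorem (precisely \cite[Theorem~7.10.1]{bogachev}) to conclude $T^*\nu\in\cM(E)$. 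But that is exactly the representation step you redo in (ii)$\Rightarrow$(i), so the detour through (ii) buys nothing. The paper's direct (iii)$\Rightarrow$(i) is the cleaner packaging of the same content.
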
	

\begin{proof}
The equivalence of (i) and (ii) is proved in \cite[Proposition 3.5]{kunze11} and the implication (ii) $\Rightarrow$ (iii) is trivial in view of the above comment. To see (iii) $\Rightarrow$ (i), let $\varphi (f) = (Tf)(x)$. By (iii), it follows that $\varphi (f_n) \to 0$ whenever $f_n$ bp-converges to $0$. Now \cite[Theorem 7.10.1]{bogachev} implies that $\varphi (f) = \int_E f\, d\nu_x$ for some Baire (hence Borel, as $E$ is Polish) measure $\nu_x$. The measurable dependence of $\nu_x$ on $x$ can now be proved in a standard way, see the proof the implication (i) $\Rightarrow$ (ii) in \cite[Proposition 3.5]{kunze11}.
\end{proof}

In what follows, the space of bounded, $\sigma$-continuous operators (equivalently: kernel operators) on $C_b(E)$ is denoted by $\cL(C_b(E), \sigma)$. Note that any operator $T\in \cL(C_b(E), \sigma)$ can uniquely be extended to a bounded linear operator on all of $B_b(E)$, by merely plugging $f\in B_b(E)$ into the right hand side of \eqref{eq.kernelrep}. In what follows we do not distinguish between $T$ and its extension to $B_b(E)$.

We are now ready to define what a $C_b$-semigroup is. To simplify the exposition, we restrict ourselves to sub-Markovian semigroups, as all the semigroups appearing in this article have this property. Obviously, a kernel operator $T$ with kernel $k$ is (sub-)Markovian if and only if the kernel $k$ is (sub-)Markovian, i.e.\ $k(x, \cdot)$ is a (sub-)probability measure for every $x\in E$.

\begin{defn}\label{d.cbsemigroup}
A \emph{$C_b$-Feller semigroup} is a family $\mk{(T(t), t>0)} \subset \cL(C_b(E), \sigma)$ with the following properties:
\begin{enumerate}
[(i)]
\item $\mk{T(t)}$ is a sub-Markovian kernel operator for every $t > 0$;
\item $\mk{T(t+s)=T(t)T(s)}$ for all $t,s > 0 $;
\item for $f\in C_b(E)$ we have $\mk{T(t)}f \to f$ as $t\to 0$, uniformly on compact subsets of $E$.
\end{enumerate}
\end{defn}

In case that $E$ is locally compact, it \mk{follows along the lines of} \cite[\mk{Lemma} 3.1]{sch98}, 
\mk{which is concerned with the case $E=\Rd$}, 
that a Feller semigroup on $C_0(E)$ can be extended to a 
$C_b$-Feller semigroup on $C_b(E)$. 
We should point out, however, that a $C_b$-Feller semigroup in the above sense does not necessarily leave the space $C_0(E)$ invariant. In that respect, our definition of $C_b$-Feller 
semigroup slightly differs from that in \cite[Definition 4.8.6]{MR1873235} where a $C_b$-Feller semigroup is assumed to be Feller.

Recalling the connection between bp-convergence and $\sigma$-convergence, we see that the requirement (iii) in the above definition in particular implies that $T_tf \to f$ as $t\to 0$ with respect to $\sigma$ and thus, by the semigroup law and the $\sigma$-continuity of the operators $\mk{T(t)}$, that $\mk{T(t)}f \to \mk{T(s)}f$ as $t\downarrow s$ for every $f\in C_b(E)$, i.e.\ the orbits $t\mapsto \mk{T(t)}f$ are right-continuous with respect to $\sigma$. In particular, the orbits have enough measurability to define the Laplace transform of a $C_b$-Feller semigroup by setting
\begin{equation}
\label{eq.sglt}
\langle \mk{R(\lambda)} f, \nu\rangle \coloneqq \int_0^\infty e^{-\lambda t}\langle \mk{T(t)}f, \nu\rangle\,\dx t
\end{equation}
for any $f\in C_b(E)$, $\nu \in \cM(E)$ and $\lambda >0$.

\begin{lem}\label{l.gen}
Let $(\mk{T(t), t>0})$ \mk{be a $C_b$-Feller semigroup}. 
Then, for every $\lambda >0$, Equation \eqref{eq.sglt} defines an operator $\mk{R(\lambda)} \in \cL(C_b(E), \sigma)$. 
Moreover, the family $(\mk{R(\lambda), \lambda >0)}$ consists of injective operators and satisfies the \emph{resolvent identity}
\[
\mk{R(\lambda_1) - R(\lambda_2)} = (\lambda_2-\lambda_1)
\mk{R(\lambda_2)R(\lambda_1)}
\]
for all $\lambda_1, \lambda_2>0$.
\end{lem}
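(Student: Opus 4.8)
The plan is to verify each of the three claims about $R_\lambda$ in turn, working from the definition \eqref{eq.sglt} and the properties of a $C_b$-Feller semigroup established earlier. First I would check that for fixed $\lambda>0$, Equation \eqref{eq.sglt} actually defines a bounded operator on $C_b(E)$. For $f\in C_b(E)$ and $\nu\in\cM(E)$, the orbit $t\mapsto \langle T_tf,\nu\rangle$ is bounded by $\|f\|_\infty\,|\nu|(E)$ and right-continuous (hence Borel measurable), as noted in the paragraph preceding the lemma; so the integral on the right-hand side converges absolutely and $|\langle R_\lambda f,\nu\rangle|\le \lambda^{-1}\|f\|_\infty|\nu|(E)$. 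Taking the supremum over $\nu$ in the unit ball of $\cM(E)$ and using that $C_b(E)$ is naturally paired with $\cM(E)$ (so that $\|g\|_\infty=\sup\{|\langle g,\nu\rangle| : |\nu|(E)\le 1\}$), this shows $R_\lambda f\in C_b(E)$ with $\|R_\lambda f\|_\infty\le\lambda^{-1}\|f\|_\infty$, i.e.\ $R_\lambda\in\cL(C_b(E))$ with $\|R_\lambda\|\le\lambda^{-1}$.

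Next I would prove $\sigma$-continuity of $R_\lambda$, which by Lemma \ref{l.kernelop} is equivalent to being a kernel operator. Using the characterization (iii) there, suppose $(f_n)\subset C_b(E)$ bp-converges to $f$. For each fixed $t\ge 0$, since $T_t\in\cL(C_b(E),\sigma)$, we have $T_tf_n\to T_tf$ bp, so in particular $\langle T_tf_n,\nu\rangle\to\langle T_tf,\nu\rangle$ for every $\nu\in\cM(E)$; moreover $|\langle T_tf_n,\nu\rangle|\le (\sup_n\|f_n\|_\infty)|\nu|(E)$ uniformly in $t$ and $n$. By dominated convergence in the $t$-integral, $\langle R_\lambda f_n,\nu\rangle\to\langle R_\lambda f,\nu\rangle$ for every $\nu$, and since $\sup_n\|R_\lambda f_n\|_\infty\le\lambda^{-1}\sup_n\|f_n\|_\infty<\infty$, this says exactly that $R_\lambda f_n\to R_\lambda f$ bp. Hence $R_\lambda\in\cL(C_b(E),\sigma)$.

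For the resolvent identity, I would fix $\lambda_1,\lambda_2>0$, $f\in C_b(E)$ and $\nu\in\cM(E)$ and compute $\langle R_{\lambda_2}R_{\lambda_1}f,\nu\rangle$ by unwinding both Laplace transforms. Since $R_{\lambda_1}\in\cL(C_b(E),\sigma)$ is a kernel operator, so is $T_tR_{\lambda_1}$, and the orbit $t\mapsto\langle T_tR_{\lambda_1}f,\nu\rangle$ is again bounded and right-continuous; applying $R_{\lambda_2}$ and using Fubini (justified by absolute convergence) together with the semigroup law $T_tT_s=T_{t+s}$ gives the standard double-integral manipulation
\[
\langle R_{\lambda_2}R_{\lambda_1}f,\nu\rangle=\int_0^\infty\!\!\int_0^\infty e^{-\lambda_2 t}e^{-\lambda_1 s}\langle T_{t+s}f,\nu\rangle\ds\dt,
\]
and evaluating this (substitute $u=t+s$, integrate out the inner variable) yields $(\lambda_1-\lambda_2)^{-1}\langle (R_{\lambda_2}-R_{\lambda_1})f,\nu\rangle$ when $\lambda_1\neq\lambda_2$, which rearranges to $R_{\lambda_1}-R_{\lambda_2}=(\lambda_2-\lambda_1)R_{\lambda_2}R_{\lambda_1}$; as $\nu$ and $f$ were arbitrary this is the operator identity. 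Finally, injectivity of $R_\lambda$ follows from strong continuity at $t=0$: if $R_\lambda f=0$, then by the resolvent identity $R_\mu f=0$ for all $\mu>0$, hence $\int_0^\infty e^{-\mu t}\langle T_tf,\nu\rangle\dt=0$ for all $\mu>0$ and all $\nu$; by uniqueness of Laplace transforms $\langle T_tf,\nu\rangle=0$ for a.e.\ $t$, and by right-continuity of the orbit at $0$ together with $T_0=I$ we get $\langle f,\nu\rangle=0$ for all $\nu$, so $f=0$. The only mildly delicate point — and the one I would present most carefully — is the measurability/integrability bookkeeping needed to apply Fubini and dominated convergence to the orbits $t\mapsto\langle T_tf,\nu\rangle$; everything else is the routine resolvent calculation, and the needed right-continuity and uniform boundedness of these orbits have already been recorded just before the statement.
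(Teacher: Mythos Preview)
Your approach is a direct, self-contained verification, whereas the paper's proof simply invokes \cite{kunze09,kunze11}: integrability of $C_b$-Feller semigroups gives $R_\lambda\in\cL(C_b(E),\sigma)$ and the resolvent identity, and a separate result there yields injectivity. Your route is more elementary and makes the mechanism visible; the paper's buys brevity by delegating to those references. The resolvent-identity computation and the injectivity argument (Laplace uniqueness plus right-continuity of orbits) are both fine.

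There is, however, one gap in your first step. From $|\langle R_\lambda f,\nu\rangle|\le\lambda^{-1}\|f\|_\infty\,|\nu|(E)$ you conclude $R_\lambda f\in C_b(E)$, appealing to the identity $\|g\|_\infty=\sup_{|\nu|(E)\le1}|\langle g,\nu\rangle|$. But that identity \emph{presupposes} $g\in C_b(E)$; a bounded linear functional on $\cM(E)$ is not automatically represented by a continuous function (think of $\nu\mapsto\nu(A)$ for a Borel set $A$ that is neither open nor closed). What you need is a separate continuity argument: set $(R_\lambda f)(x):=\int_0^\infty e^{-\lambda t}(T_tf)(x)\,\dx t$ by testing against $\delta_x$, and for $x_n\to x$ in $E$ use that each $T_tf$ is continuous together with the uniform bound $|(T_tf)(x_n)|\le\|f\|_\infty$ and dominated convergence in $t$. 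A Fubini step (using joint measurability of $(t,x)\mapsto (T_tf)(x)$, which follows from right-continuity in $t$ and continuity in $x$) then shows that this pointwise definition agrees with \eqref{eq.sglt} for general $\nu$. Once $R_\lambda f\in C_b(E)$ is established, your use of Lemma~\ref{l.kernelop}(iii) and the rest of the argument go through.
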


\begin{proof}
By \cite[Theorem 6.2]{kunze11} any $C_b$-Feller semigroup is integrable in the sense of \cite[Definition 5.1]{kunze11}. Now the resolvent identity for the operators $R_\lambda$ follows from \cite[Proposition 5.2]{kunze11}. That the operators $\mk{R(\lambda)}$ are injective is a consequence of \cite[Theorem 2.10]{kunze09}.
\end{proof}

As is \mk{well known}, if $\mk{(R(\lambda), \lambda>0)}$ consists of injective operators and satisfies the resolvent identity, then there exists a unique operator $A$ ($= \lambda - \mk{R(\lambda)}^{-1}$) such that $\mk{R(\lambda)} = (\lambda - A)^{-1}$.

\begin{defn}\label{d.cbgenerator}
Let $(\mk{T(t), t>0})$ be a $C_b$-Feller semigroup. 
The \emph{$C_b$-generator} of $(\mk{T(t), t>0})$ is the unique operator $A$ such that
$\mk{R(\lambda)} = (\lambda - A)^{-1}$ for all $\lambda >0$, where the operators $\mk{R(\lambda)}$ are given by Equation \eqref{eq.sglt}
\kl{, and its domain is}
$\mk{D(A)} \kl{\coloneqq} \mk{\mathrm{rg}R(\lambda)}$, \mk{which is independent of $\lambda>0$.}
\end{defn}

The above gives an ``integral'' definition of the ($C_b$-)generator by means of the Laplace transform of the semigroup. Often, a differential definition of the generator is 
\kb{preferred} and we show next that several differential definitions are in fact equivalent to the above. In one of them, we make use of the so-called \emph{strict topology} $\beta_0$ on $C_b(E)$. This topology is defined as follows: Let $\mathscr{F}_0(E)$ denote the set of functions $\varphi: E\to \R$ that \emph{vanish at infinity}, i.e.\ for every $\eps>0$ there exists a compact set $H\subset E$ with 
$|\varphi (x)|\leq \eps$ for all $x\in E\setminus H$. Then the \emph{strict topology} $\beta_0$ is the locally convex topology generated by the seminorms $\{p_\varphi : \varphi \in \mathscr{F}_0\}$, where $p_\varphi(f) = \|\varphi f\|_\infty$. This topology is consistent with the duality
$(C_b(E), \cM(E))$, i.e., the dual space $(C_b(E), \beta_0)'$ is $\cM(E)$, see \cite[Theorem 7.6.3]{jarchow81}. In fact, it is the Mackey topology of the dual pair $(C_b(E), \cM(E))$, i.e.\ the finest locally convex topology on $C_b(E)$ that yields $\cM(E)$ as a dual space, see \cite[Theorem 4.5 and 5.8]{sentilles72}. This implies that a kernel operator is automatically also $\beta_0$-continuous. By \cite[Theorem 2.10.4]{jarchow81}, $\beta_0$ coincides on $\|\cdot\|_\infty$-bounded subsets on $C_b(E)$ with the topology of uniform convergence on compact subsets of $E$. Thus, condition (iii) in Definition \ref{d.cbsemigroup} can be reformulated by saying $\mk{T(t)}f\to f$ with respect to $\beta_0$ 
as $t\to 0$ for every $f\in C_b(E)$. Taking the $\beta_0$-continuity of the operators $\mk{T(t)}$ into account, it follows that for every $f\in C_b(E)$ the orbit $t\mapsto \mk{T(t)}f$ is 
$\beta_0$ right-continuous.

\begin{thm}\label{t.generatorchar}
Let $(\mk{T(t), t> 0})$ be a  $C_b$-Feller semigroup with $C_b$-generator $A$. Then for $u,f\in C_b(E)$, the following assertions are equivalent.
\begin{enumerate}
[(i)]
\item $u\in D(A)$ and $Au=f$.
\item For every $t>0$ and $x\in E$, we have $\mk{T(t)}u(x) - u(x) = \int_0^t\mk{T(s)}f(x)\,\dx s$.
\item $\sup \{ t^{-1} \|\mk{T(t)}u-u\|_\infty : t\in (0,1)\} <\infty$ and $t^{-1}(\mk{T(t)}u(x) - u(x)) \to f(x)$ as 
$t\to 0$ for all $x\in E$.
\item $t^{-1}(\mk{T(t)}u - u) \to f$ with respect to $\sigma$ as $t\to 0$.
\item $t^{-1}(\mk{T(t)}u-u) \to f$ with respect to 
$\beta_0$ as $t\to 0$.
\item $\sup \{t^{-1}\|\mk{T(t)}u-u\|_\infty : t\in (0,1)\} <\infty$ and $t^{-1}(\mk{T(t)}u-u) \to f$ as $t\to 0$ uniformly on compact subsets of $E$.
\end{enumerate}
\end{thm}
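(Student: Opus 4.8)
The plan is to prove the equivalence of the six assertions by establishing a cycle of implications, relying throughout on the abstract theory of semigroups on norming dual pairs developed in \cite{kunze09, kunze11}, of which a $C_b$-Feller semigroup is a special case. First I would observe that the implications (vi) $\Rightarrow$ (iii), (v) $\Rightarrow$ (iv), and (ii) $\Rightarrow$ (iv) are either immediate or follow from elementary observations: (vi) $\Rightarrow$ (iii) because uniform convergence on compacts together with the uniform bound on difference quotients forces pointwise convergence on all of $E$; (v) $\Rightarrow$ (iv) because $\beta_0$-convergence is stronger than $\sigma$-convergence (the strict topology is consistent with the dual pair); and (ii) $\Rightarrow$ (iv) by dividing the integral identity by $t$ and using the $\sigma$-right-continuity of the orbit $s \mapsto T_s f$ established in the discussion preceding the theorem, so that $t^{-1}\int_0^t T_sf\, \dx s \to f$ with respect to $\sigma$ as $t\to 0$.

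Next I would treat the core equivalence (i) $\Leftrightarrow$ (ii) $\Leftrightarrow$ (iv), which is the heart of the matter and should be extracted directly from \cite{kunze11}. The equivalence of (i) and (iv) is essentially the statement that the $C_b$-generator defined via the Laplace transform (Definition \ref{d.cbgenerator}) agrees with the ``weak'' generator defined via $\sigma$-convergence of difference quotients; this is proved in \cite{kunze11} using the resolvent identity and injectivity from Lemma \ref{l.gen}, together with the standard Laplace-transform computation $\lambda R_\lambda u - u = R_\lambda(\lambda u - \cA u)$ rewritten in integrated form. For (ii) $\Leftrightarrow$ (i), the forward direction uses that if $(u,f)$ satisfies the integral identity then applying $R_\lambda$ and integrating by parts yields $R_\lambda f = \lambda R_\lambda u - u$, hence $u \in D(A)$ and $Au = f$; the converse direction integrates the resolvent equation. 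I expect the main obstacle here to be bookkeeping: precisely matching our hypotheses (the pair $u,f \in C_b(E)$, the particular normalization of the Laplace transform in \eqref{eq.sglt}) to the exact statements in \cite{kunze09, kunze11}, and making sure the measurability and boundedness of the orbits (needed to even form the Laplace transform) are invoked correctly.

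Finally I would close the cycle by proving the implications that upgrade the weak convergence in (iv) to the stronger, more quantitative statements (iii), (v), (vi), which requires genuine input beyond soft functional analysis. The key tool is a uniform boundedness argument combined with the equicontinuity that $\beta_0$ provides on norm-bounded sets: starting from (i), one shows $\sup_{t\in(0,1)} \|t^{-1}(T_tu - u)\|_\infty < \infty$ by writing $t^{-1}(T_tu-u) = t^{-1}\int_0^t T_sf\,\dx s$ (which is (ii), already available) and using $\|T_s\| \le 1$, giving the bound $\|f\|_\infty$. Once the difference quotients are norm-bounded, pointwise convergence (from (iv)) together with the fact that on norm-bounded sets $\beta_0$ coincides with the topology of uniform convergence on compacts (\cite[Theorem 2.10.4]{jarchow81}, as recalled in the text) upgrades the convergence to $\beta_0$ and to uniform convergence on compacts, yielding (v), (vi), and (iii). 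The arrangement of implications I would actually write is the cycle (i) $\Rightarrow$ (ii) $\Rightarrow$ (iii) $\Rightarrow$ (vi) $\Rightarrow$ (v) $\Rightarrow$ (iv) $\Rightarrow$ (i), with the step (ii) $\Rightarrow$ (iii) supplying the uniform bound and the step (iv) $\Rightarrow$ (i) being the substantive content imported from \cite{kunze11}; the hard part is ensuring the topological upgrade (iii)/(vi)/(v) is done cleanly rather than circularly.
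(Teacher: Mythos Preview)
Your cycle breaks at the step (iii) $\Rightarrow$ (vi). You argue that once the difference quotients are norm-bounded, pointwise convergence ``upgrades'' to $\beta_0$-convergence because $\beta_0$ coincides on bounded sets with the topology of uniform convergence on compacts. But that coincidence only tells you what $\beta_0$-convergence \emph{is} on bounded sets; it does not say that bounded pointwise (equivalently, $\sigma$-)convergence implies it. It does not: tent functions of height $1$ supported near a fixed interior point of $E$ converge boundedly and pointwise to $0$ but not uniformly on any compact containing that point. So (iii) $\Rightarrow$ (vi) fails as stated, and your chain does not close.

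The paper avoids this by invoking \cite[Theorem 2.10]{kunze09} \emph{twice}: once with $\tau_{\mathfrak M}=\sigma$ to obtain (iv) $\Leftrightarrow$ (i), and once with $\tau_{\mathfrak M}=\beta_0$ (the Mackey topology of the pair) to obtain (v) $\Leftrightarrow$ (i) directly. The equivalence (v) $\Leftrightarrow$ (vi) then follows from the $\beta_0$/compact-uniform identification on bounded sets, with the boundedness supplied by the already-established chain (v) $\Rightarrow$ (i) $\Rightarrow$ (iii). If you prefer to keep your overall structure and not cite the $\beta_0$ case of \cite{kunze09}, there is an easy repair: replace (iii) $\Rightarrow$ (vi) by (ii) $\Rightarrow$ (vi). From (ii) you have $t^{-1}(T_tu-u)=t^{-1}\int_0^t T_sf\,\dx s$, and property (iii) of Definition~\ref{d.cbsemigroup} gives $T_sf\to f$ uniformly on compacts as $s\to 0$; averaging preserves this, yielding uniform convergence on compacts together with the bound $\|f\|_\infty$. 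Then your cycle (i) $\Rightarrow$ (ii) $\Rightarrow$ (vi) $\Rightarrow$ (v) $\Rightarrow$ (iv) $\Rightarrow$ (i), plus (ii) $\Rightarrow$ (iii) $\Rightarrow$ (iv), goes through.
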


\begin{proof}
(i) $\Rightarrow$ (ii). By \cite[Proposition 5.7]{kunze11}(i), $\langle \mk{T(t)}u - u, \nu\rangle = \int_0^t \langle \mk{T(s)}f, \nu\rangle \,\dx s$
for all $t>0$ and $\nu \in \cM(E)$. 
Picking $\nu = \delta_x$, we get (ii).

(ii) $\Rightarrow$ (iii). We have $t^{-1}(\mk{T(t)}u(x) - u(x)) = t^{-1}\int_0^t \mk{T(s)}f(x)\,\dx s \to 
f(x)$ as $t\to 0$, by the continuity of $s\mapsto \mk{T(s)}f(x)$ in $0$. 
Moreover, 
\[
\|t^{-1}(\mk{T(t)}u(x) - u(x)\|_\infty \leq 
t^{-1} \int_0^t \|\mk{T(s)} f\|_\infty\dx s \leq 
\|f\|_\infty <\infty
\]
for all $t>0$.

(iii) $\Rightarrow$ (iv) follows from the dominated convergence theorem, whereas (iv) $\Rightarrow$ (i) is a consequence of \cite[Theorem 2.10]{kunze09}, applied with $\tau_{\mathfrak{M}} = \sigma$, which corresponds to choosing $\mathfrak{M}$ as the finite subsets of $Y= \cM(E)$.
\smallskip

As $\beta_0$ is the Mackey topology of the pair $(C_b(E),\cM(E))$, we have $\beta = \tau_{\mathfrak{M}}$ where $\mathfrak{M}$ denotes the collection of all absolutely convex subsets of $Y= \cM(E)$ which are $\sigma (\cM(E), C_b(E))$-compact. Thus
the equivalence (i) $\Leftrightarrow$ (v) also follows from \cite[Theorem 2.10]{kunze09}, this time applied with $\tau_\mathfrak{M}= \beta_0$.  The remaining equivalence (v) $\Leftrightarrow$ (vi) follows from the fact that $\beta_0$ coincides with the topology of uniform convergence on compact subset of $E$ on $\|\cdot\|_\infty$-bounded subsets of $C_b(E)$ and the already established implications (v) $\Rightarrow$ (i) $\Rightarrow$ (iii).
\end{proof}

If $(\mk{T(t), t> 0})$ is a $C_b$-Feller semigroup then, by the $\beta_0$-continuity of the operators $\mk{T(t)}$ and (iii) in Definition 
\ref{d.cbsemigroup}, for every $f\in C_b(E)$ the orbit $t\mapsto \mk{T(t)}f$ is right-continuous with respect to $\beta_0$. 
It is a natural question, whether each orbit is actually $\beta_0$-continuous, but, to the best of our knowledge, it is still open. 
However, if $(\mk{T(t), t> 0})$ additionally enjoys the \emph{strong Feller property}, i.e.\ $\mk{T(t)}B_b(E) \subset C_b(E)$ for all $t>0$, then this is indeed the case.

\begin{lem}\label{l.strongfellerorbits}
Let $(\mk{T(t), t> 0})$ be a  $C_b$-Feller semigroup that enjoys the strong Feller property. Then $(\mk{T(t), t> 0})$ has the following additional properties. \mk{Here, in parts (b) and (c) we set $T(0)=I$}.
\begin{enumerate}
[(a)]
\item For every $f\in B_b(E)$, the map $(0,\infty)\times E \ni (t,x) \mapsto \mk{T(t)}f(x)$ is continuous.
\item For every $f\in \mk{C}_b(E)$, the map $[0,\infty) \ni t \mapsto \mk{T(t)}f$ is $\beta_0$-continuous.
\item For every $f \in \mk{C}_b(E)$ and $t_0\in [0,\infty)$, we have $T_tf \to T_{t_0}f$ as $t\to t_0$ uniformly on compact subsets of $E$.
\end{enumerate}
\end{lem}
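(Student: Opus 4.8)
The plan is to reduce all three assertions to a single statement about the transition kernels of the semigroup. Write $k_t(x,\cdot)$ for the sub-probability kernel representing $T_t$. It suffices to prove that the map
\[
(0,\infty)\times E\ni(t,x)\longmapsto k_t(x,\cdot)\in\cM(E)
\]
is continuous for the total variation norm $\|\cdot\|_\tv$. Indeed, $|T_tf(x)-T_sf(y)|\le\|f\|_\infty\|k_t(x,\cdot)-k_s(y,\cdot)\|_\tv$ for every $f\in B_b(E)$, which gives (a) at once; (b) and (c) on $(0,\infty)$ then follow by a routine compactness argument, since for $H\Subset E$ any sequence with $t_n\to t_0>0$ and $x_n\in H$ has a subsequence along which $x_n\to x_*\in H$, whence
\[
\|k_{t_n}(x_n,\cdot)-k_{t_0}(x_n,\cdot)\|_\tv\le\|k_{t_n}(x_n,\cdot)-k_{t_0}(x_*,\cdot)\|_\tv+\|k_{t_0}(x_*,\cdot)-k_{t_0}(x_n,\cdot)\|_\tv\longrightarrow0
\]
by the joint continuity and by continuity of $x\mapsto k_{t_0}(x,\cdot)$; the case $t_0=0$ in (b) and (c) is, for $f\in C_b(E)$, part of Definition~\ref{d.cbsemigroup}. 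The two structural inputs are: first, that for every $t>0$ the operator $T_t=T_{t/2}T_{t/2}$ is \emph{ultra-Feller}, being a composition of two strong Feller operators (as recalled in the proof of Lemma~\ref{l.phiseries}); equivalently, $x\mapsto k_t(x,\cdot)$ is $\|\cdot\|_\tv$-continuous for each fixed $t>0$. Second, I will use the elementary contraction $\|\mu P-\nu P\|_\tv\le\|\mu-\nu\|_\tv$ valid for any sub-Markov kernel $P$ and any signed measures $\mu,\nu$, together with the fact that $k_h(y,\cdot)\to\delta_y$ weakly as $h\downarrow0$, which is nothing but $T_hg(y)\to g(y)$ for $g\in C_b(E)$.

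First I would prove that, for fixed $y\in E$, the map $t\mapsto k_t(y,\cdot)$ is $\|\cdot\|_\tv$-continuous at each $s_0>0$. Consider $t\uparrow s_0$ (the case $t\downarrow s_0$ is analogous). From $k_{s_0}(y,\cdot)=\int_E k_{s_0-t}(y,\mathrm{d}z)\,k_t(z,\cdot)$ one obtains
\[
\|k_{s_0}(y,\cdot)-k_t(y,\cdot)\|_\tv\le\int_E k_{s_0-t}(y,\mathrm{d}z)\,\|k_t(z,\cdot)-k_t(y,\cdot)\|_\tv+\big|k_{s_0-t}(y,E)-1\big|.
\]
For $t\ge s_0/2$ the contraction estimate bounds $\|k_t(z,\cdot)-k_t(y,\cdot)\|_\tv$ by the \emph{fixed} function $\Gamma(z):=\|k_{s_0/2}(z,\cdot)-k_{s_0/2}(y,\cdot)\|_\tv$, which is bounded by $2$, continuous by ultra-Fellerness at time $s_0/2$, and vanishes at $z=y$. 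As $t\uparrow s_0$ we have $k_{s_0-t}(y,\cdot)\to\delta_y$ weakly with total mass tending to $1$, so $\int\Gamma\,\mathrm{d}k_{s_0-t}(y,\cdot)\to\Gamma(y)=0$ and $|k_{s_0-t}(y,E)-1|\to0$; hence $\|k_{s_0}(y,\cdot)-k_t(y,\cdot)\|_\tv\to0$. I expect this step to be the main obstacle: the naive route to left-continuity of the orbit runs into a circularity, since one would need continuity of the orbit in order to pass to the limit, and it is exactly the contraction estimate, which trades the moving quantity $\|k_t(z,\cdot)-k_t(y,\cdot)\|_\tv$ for the frozen $\Gamma(z)$, that breaks it.

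With one-variable continuity available, joint continuity follows by the same device. Given $(t_n,x_n)\to(t_0,x_0)$ with $t_0>0$, fix $h\in(0,t_0/2)$ and assume $t_n>2h$; then
\[
k_{t_n}(x_n,\cdot)-k_{t_0}(x_0,\cdot)=\int_E k_h(x_n,\mathrm{d}y)\big(k_{t_n-h}(y,\cdot)-k_{t_0-h}(y,\cdot)\big)+\int_E\big(k_h(x_n,\mathrm{d}y)-k_h(x_0,\mathrm{d}y)\big)k_{t_0-h}(y,\cdot).
\]
Taking $\|\cdot\|_\tv$-norms, the second term is at most $\|k_h(x_n,\cdot)-k_h(x_0,\cdot)\|_\tv\to0$ (ultra-Fellerness at time $h$, $x_n\to x_0$), while the first is at most $\int_E k_h(x_n,\mathrm{d}y)\,G_n(y)$ with $G_n(y):=\|k_{t_n-h}(y,\cdot)-k_{t_0-h}(y,\cdot)\|_\tv\le2$ tending to $0$ pointwise by the previous step (note $t_0-h>0$); since $k_h(x_n,\cdot)\to k_h(x_0,\cdot)$ in $\|\cdot\|_\tv$, bounding $\int G_n\,\mathrm{d}k_h(x_n,\cdot)\le\int G_n\,\mathrm{d}k_h(x_0,\cdot)+2\|k_h(x_n,\cdot)-k_h(x_0,\cdot)\|_\tv$ and applying dominated convergence against the fixed measure $k_h(x_0,\cdot)$ shows this tends to $0$ as well. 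This establishes the displayed joint continuity of $(t,x)\mapsto k_t(x,\cdot)$, and hence, as explained above, the lemma.
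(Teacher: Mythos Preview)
Your argument is correct and takes a genuinely different, more self-contained route than the paper. The paper's proof is essentially a string of citations: part (a) is quoted from Bliedtner--Hansen \cite[Proposition V.2.10]{MR850715} (with a measurability refinement from \cite[Theorem 3.7]{kunze20}), and parts (b), (c) are then deduced from \cite[Theorem 4.4]{kunze11} after observing that, for $f\in C_b(E)$, the map $(t,x)\mapsto T_tf(x)$ is jointly continuous on $[0,\infty)\times E$. You instead work directly with the kernels and establish the stronger statement that $(t,x)\mapsto k_t(x,\cdot)$ is continuous in total variation on $(0,\infty)\times E$, which gives (a) uniformly over $\|f\|_\infty\le 1$ and yields (b), (c) on $(0,\infty)$ by a short compactness argument. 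The key device---freezing the moving quantity $\|k_t(z,\cdot)-k_t(y,\cdot)\|_\tv$ at the fixed time $s_0/2$ via the contraction estimate, so that the weak convergence $k_h(y,\cdot)\to\delta_y$ is tested against a \emph{continuous} integrand---is precisely what breaks the circularity you identify, and it is not made explicit in the cited sources. What you gain is an elementary argument that avoids three external references and delivers the sharper total-variation conclusion; the paper's approach gains brevity. One small remark: your treatment of $t_0=0$ in (b), (c) is restricted to $f\in C_b(E)$, but the paper's proof is in the same position (the joint continuity at $t=0$ is only established for $f\in C_b(E)$)---indeed for discontinuous $f$ the uniform-on-compacts convergence $T_tf\to f$ cannot hold, since each $T_tf$ is continuous by the strong Feller property.
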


\begin{proof}
(a) follows from \cite[Proposition V.2.10]{MR850715}. See also \cite[Theorem 3.7]{kunze20}, which shows that the continuity assumption in \cite{MR850715} can be weakened to a measurability assumption.  It follows from (a) and (iii) in Definition \ref{d.cbsemigroup}, that for $f\in C_b(E)$ the map $[0,\infty)\times E \ni (t,x) \mapsto T_tf(x)$ is continuous. Now (b) and (c) follow from \mk{\cite[Theorem 4.4]{kunze09}}.
\end{proof}

\end{document}